\numberwithin{equation}{section}
\providecommand{\U}[1]{\protect\rule{.1in}{.1in}}
\providecommand{\U}[1]{\protect \rule{.1in}{.1in}}
\newtheorem{theorem}{Theorem}[section]
\newtheorem{lemma}[theorem]{Lemma}
\newtheorem{proposition}[theorem]{Proposition}
\newenvironment{proof}[1][Proof]{\noindent \textbf{#1.} }{\  \rule{0.5em}{0.5em}}
\DeclareMathOperator*{\esssup}{ess\,sup}
\DeclareMathOperator*{\essinf}{ess\,inf}
\def \E{\mathsf{E}}
\def \P{\mathsf{P}}
\definecolor{purple}{rgb}{0.5,0,1}
\begin{document}
	\title{Propagation of chaos for doubly mean reflected BSDEs}
	\author{Hanwu Li\thanks{Research Center for Mathematics and Interdisciplinary Sciences,
			Frontiers Science Center for Nonlinear Expectations (Ministry of Education), Shandong University, Qingdao 266237, Shandong, China.}
		\and Ning Ning\thanks{Department of Statistics,
			Texas A\&M University, College Station, Texas, USA}}
	\date{}
	\maketitle
	
	\begin{abstract}
		In this paper, we establish propagation of chaos (POC) for doubly mean reflected backward stochastic differential equations (MRBSDEs). MRBSDEs differentiate the typical RBSDEs in that the constraint is not on the paths of the solution but on its law. This unique property has garnered significant attention since the inception of MRBSDEs. Rather than directly investigating these equations, we focus on approximating them by interacting particle systems (IPS). We propose two sets of IPS having mean-field Skorokhod problems, capturing the dynamics of IPS reflected in a mean-field way.  As the dimension of the IPS tends to infinity, the POC phenomenon emerges, indicating that the system converges to a limit with independent particles, where each solves the MRBSDE. Beyond establishing the first POC result for doubly MRBSDEs, we achieve distinct convergence speeds under different scenarios.
	\end{abstract}
	
	\textbf{Key words}: Backward SDEs; mean reflection; interacting particle systems
	
	\textbf{MSC-classification}: 60H10; 60K35



\allowdisplaybreaks
\newcommand{\ABS}[1]{\Bigg(#1\Bigg)} 
\newcommand{\veps}{\varepsilon} 





\section{Introduction}
We firstly give the background and motivation of doubly mean reflected backward stochastic differential equations (MRBSDEs) in Section \ref{sec:intro_background} and then state our contributions of propagation of chaos (POC) in Section \ref{sec:intro_contributions}, followed by the organization of the paper in Section \ref{sec:organization}.

\subsection{Doubly MRBSDE}
\label{sec:intro_background}
We consider a complete probability space $(\Omega, \mathcal{F}, \mathbb{F}=\{\mathcal{F}_t\}_{t\in [0,T]}, \P)$ which supports an $\mathbb{F}$-adapted standard Brownian motion $B=\{B_t\}_{t\in[0,T]}$.
In this paper, we aim to approximate the following doubly MRBSDE and establish the rate of convergence: for $t\in[0,T]$,
\begin{equation}\label{nonlinearyz_noz}
		Y_t=\xi+\int_t^T f(s,Y_s)ds-\int_t^T Z_s dB_s+K_T-K_t,
\end{equation}
with the terminal condition $\xi\in L^2(\mathcal{F}_T;\,\mathbb{R})$ and the reflection process $K_t=K^+_t-K^-_t$, satisfying that $\E[h(Y_t)]\in [l_t,u_t]$ and
\begin{equation}\label{eqn:our_Skorokhod condition}
	\int_0^T (\E[h(Y_t)]-l_t)dK_t^+=\int_0^T (u_t-\E[h(Y_t)])dK^-_t=0,
\end{equation}
where $L^2(\mathcal{F}_T;\,\mathbb{R})$ denotes the set of $\mathcal{F}_T$-measurable random variables $\xi$ taking values in $\mathbb{R}$ such that $\E|\xi|^2<\infty$, the functions $l_t, u_t$ are two obstacles and $h:\mathbb{R}\rightarrow \mathbb{R}$ is called the utility function or loss function. 
We consider a stochastic driver function
$f:\Omega\times[0,T]\times\mathbb{R}\rightarrow \mathbb{R}$ as progressively measurable for each fixed $y\in\mathbb{R}$, satisfying $\E[\int_0^T|f(\cdot,0)|^2dt]<\infty$ and the standard Lipschitz continuous condition, i.e., there exists a constant $L\geq 0$ such that 
\begin{align}
	\label{eqn:Lipschitz}
	|f(t,y)-f(t,y')|\leq L(|y-y'|),\quad \forall t\in [0,T],\quad y,y'\in \mathbb{R},\quad \P\textrm{-a.s..}
\end{align}

Equation \eqref{nonlinearyz_noz} belongs to the RBSDE family. RBSDEs naturally arose in various applications such as stochastic control, mathematical finance, and nonlinear Feynman-Kac formulae.  RSDEs were introduced by  \cite{skorokhod1961stochastic} in the case where $l=0$ and $u=+\infty$ under the typical form that $l_t\leq Y_t\leq u_t$, satisfying the following so-named Skorokhod condition:
\begin{align}
	\label{eqn:classical_Skorokhod}
	\int_0^T (Y_t-l_t)dK_t^+=\int_0^T (u_t-Y_t)dK^-_t=0.
\end{align}
RBSDEs with the above single reflection were introduced by  \cite{el1997reflected}. 
Their work was generalized by \cite{cvitanic1996backward} to RBSDEs with double reflections, where the solution lies between two general obstacles. 
Since then the interest to those doubly RBSDEs grows steadily because they are an important tool in many mathematical fields, for example \cite{hamadene2000reflected, ma2001reflected, ren2016approximate, ning2021well, ning2023multi}.

Our Skorokhod condition \eqref{eqn:our_Skorokhod condition} is different from the typical one \eqref{eqn:classical_Skorokhod} in that the constraints are on the law of the process $Y$ rather than on its paths. Hence, the RBSDE \eqref{nonlinearyz_noz} is called  MRBSDE. The mean reflection modeling was pioneered in \cite{Bouchard2015BSDEs}, where the terminal condition is replaced by a constraint on the distribution of the random variable $Y_T$. MRBSDEs were formally formulated in \cite{briand2018bsdes} with deterministic $K$ and $\E[h(Y_t)]\in [0,+\infty)$ satisfying the Skorokhod condition of mean type:
\begin{equation}\label{eqn:one_Skorokhod condition}
	\int_0^T \E[h(Y_t)]dK_t=0,
\end{equation}
which allows one to qualify the solution as flat when satisfied. Since then, MRSDEs and MRBSDEs have attracted warm interests in the probability community, which include, but are not limited to, the following: quadratic MRBSDEs \citep{hibon2017quadratic}, MRSDEs with jumps \citep{briand2020mean}, large deviation principle for the MRSDEs with jumps \citep{li2018large}, MRSDEs with two constraints \citep{falkowski2021mean}, Multi-dimensional MRBSDEs \citep{qu2023multi}, and the well-posedness of MRBSDEs with different reflection restrictions \citep{falkowski2022backward,li2023backward} that cover our doubly MRBSDE \eqref{nonlinearyz_noz}.

The focus of this paper is to approximate the doubly MRBSDE \eqref{nonlinearyz_noz} by interacting particle systems (IPS). Because the reflection process $K$ depends on the law of the position, it is hence nonlinear in the McKean–Vlasov’s terminology. Consequently, it is pertinent to explore whether this system can be regarded as the asymptotic counterpart of a mean-field Skorokhod problem – essentially, the asymptotic dynamics of a particle system reflected in a mean-field. As the dimension of the IPS approaches infinity, the system converges towards a limit where any finite subset of particles becomes asymptotically independent of each other. This phenomenon, commonly referred to as the POC, is elucidated further in the classical paper \cite{sznitman1991topics}. POC for MRSDEs was first established in \cite{Briand2020Particles} and then it was generalized to the backward framework in \cite{briand2021particles}. It's crucial to note that these two POC results specifically pertain to single reflection cases, which represent a specialized instance of the general double reflection cases. This paper is hence dedicated to the pursuit of establishing the first POC result for doubly MRBSDEs.

\subsection{Our contributions}
\label{sec:intro_contributions}
We are interested in the standard setup that $K^+,K^-\in \mathcal{I}([0,T];\,\mathbb{R})$, the set of  nondecreasing continuous functions starting from the origin. To be precise, we consider the following doubly MRBSDE:
\begin{equation}\label{nonlineary} 
	\begin{cases}
		Y_t=\xi+\int_t^T f(s,Y_s)ds-\int_t^T Z_s dB_s+K_T-K_t,\quad \E[h(Y_t)]\in [l_t,u_t],\quad  t\in[0,T], \vspace{0.2cm}\\
		K=K^+ -K^-\quad\textrm{with}\quad  K^+,K^-\in \mathcal{I}([0,T];\,\mathbb{R}),\vspace{0.2cm}\\
		\int_0^T (\E[h(Y_t)]-l_t)dK_t^+=\int_0^T (u_t-\E[h(Y_t)])dK^-_t=0.
	\end{cases}
\end{equation}
For a fixed positive integer $N$, let $\{\xi^i\}_{1\leq i\leq N}$, $\{f^i\}_{1\leq i\leq N}$, and $\{B^i\}_{1\leq i\leq N}$ be independent copies of $\xi$, $f$, and $B$, respectively. Especially, suppose that the terminal value $\xi$ and the driver $f$ of doubly MRBSDE \eqref{nonlineary} take the following form:
\begin{align}
	\label{remarkxi0}
	\xi=G(\{B_t\}_{t\in[0,T]})\quad \text{and}\quad f(t,y)=F(t,\{B_{s\wedge t}\}_{s\in[0,T]},y),
\end{align}
where $G$ and $F$ being measurable functions ensure the conditions on $\xi$ and $f$ hold, respectively. Then, for any $1\leq i\leq N$, we may take
\begin{align}
	\label{remarkxi}
	\xi^i=G(\{B^i_t\}_{t\in[0,T]})\quad \text{and}\quad f^i(t,y)=F(t,\{B^i_{s\wedge t}\}_{s\in[0,T]},y).
\end{align}
The augmented natural filtration of the family of Brownian motions $\{B^i\}_{1\leq i\leq N}$ is denoted by $\mathbb{F}^{(N)}=\{\mathcal{F}^{(N)}_t\}_{t\in[0,T]}$.

We aim to approximate the solution to the doubly MRBSDE \eqref{nonlineary}  using the following IPS:
\begin{equation}\label{eq7}
	\begin{cases}
		\overline{Y}^i_t=\theta^i+\int_t^T f^i(s,\overline{Y}^i_s) ds-\int_t^T\sum_{j=1}^N \overline{Z}^{i,j}_sd B^j_s+\overline{K}^{(N)}_T-\overline{K}^{(N)}_t, \quad 1\leq i\leq N,\vspace{0.2cm}\\
		l_t\leq \frac{1}{N}\sum_{i=1}^N h(\overline{Y}^i_t)\leq u_t, \quad t\in[0,T],\vspace{0.2cm}\\
		\overline{K}^{(N)}=\overline{K}^{(N),+}-\overline{K}^{(N),-} \quad\textrm{with} \quad \overline{K}^{(N),+},\overline{K}^{(N),-}\in \mathcal{A}^2(\mathbb{F}^{(N)},\mathbb{R}),\vspace{0.2cm}\\
		\int_0^T(\frac{1}{N}\sum_{i=1}^N h(\overline{Y}^i_t)-l_t)d\overline{K}^{(N),+}_t=\int_0^T (u_t-\frac{1}{N}\sum_{i=1}^N h(\overline{Y}^i_t))d\overline{K}^{(N),-}_t=0,
	\end{cases}
\end{equation}
where 
\begin{align}
	\label{eqn:theta_def}
	\theta^i=\xi^i+\Psi^{(N)}_T+\Phi^{(N)}_T,
\end{align}
$\Psi^{(N)}_T$, $\Phi^{(N)}_T$ are two $\mathcal{F}^{(N)}_T$-measurable random variables defined by
\begin{equation}\label{PsinPhin'}\begin{split}
		&\Psi^{(N)}_T:=\inf\Bigg\{x\geq 0;\,\frac{1}{N}\sum_{i=1}^N h(\xi^i+x)\geq l_T\Bigg\}, \\
		& \Phi^{(N)}_T:=\sup\Bigg\{x\leq 0;\,\frac{1}{N}\sum_{i=1}^N h(\xi^i+x)\leq u_T\Bigg\},
\end{split}\end{equation}
and $\mathcal{A}^2(\mathbb{F}^{(N)},\mathbb{R})$ stands for the set of all continuous, progressively measurable and nondecreasing processes $K$   such that $K_0=0$ and $\E|K_T|^2<\infty$.

Clearly, to prove the POC using the IPS \eqref{eq7}, we need to firstly establish the well-posedness of it. To this end, we first establish that of its special case, the IPS \eqref{eq7'} that the driver $f$ does not depend on $y$. That mission is accomplished in Theorem \ref{thm5.1} under the following conditions:
\vspace{0.15cm}

\noindent (A1) the functions $l,u\in \mathcal{BV}([0,T];\,\mathbb{R})$ and  
	$\inf_{t\in[0,T]}(u_t-l_t)>0$;
\vspace{0.2cm}

\noindent(A2) the function $h(x)=ax+b$ for some $a>0$ and $b\in \mathbb{R}$.
\vspace{0.2cm}

\noindent Here, $\mathcal{BV}([0,T];\,\mathbb{R})$ denotes the set of functions in $\mathcal{C}([0,T];\,\mathbb{R})$ starting from the origin having bounded variation on $[0,T]$, where $\mathcal{C}([0,T];\,\mathbb{R})$ denotes the set of continuous functions from $[0,T]$ to $\mathbb{R}$.
 Based on that, we are able to show in Theorem \ref{thm4.1} that the  IPS \eqref{eq7} has a unique solution $(\{\overline{Y}^i,\overline{Z}^i\}_{1\leq i\leq N},\overline{K}^{(N)})\in (\mathcal{S}^2(\mathbb{F}^{(N)};\,\mathbb{R})\times\mathcal{H}^2(\mathbb{F}^{(N)};\,\mathbb{R}^N))^N\times\mathcal{A}^2_{BV}(\mathbb{F}^{(N)};\,\mathbb{R})$. Here, $\mathcal{S}^2(\mathbb{F}^{(N)};\,\mathbb{R})$ stands for the set of  $\mathbb{F}^{(N)}$-adapted continuous processes $Y$ taking values in $\mathbb{R}$ such that $\E[\sup_{t\in[0,T]}|Y_t|^2]<\infty$;
$\mathcal{H}^2(\mathbb{F}^{(N)};\,\mathbb{R}^N)$ stands for the set of progressively measurable processes $Z$ taking values in $\mathbb{R}^N$ such that $\E[\int_0^T|Z_t|^2dt]<\infty$; and $\mathcal{A}^2_{BV}(\mathbb{F}^{(N)};\,\mathbb{R})$ stands for the set of all continuous, progressively measurable processes $K$  such that $K_0=0$ and $\E(|K|_0^T)^2<\infty$, where $|K|_0^T$ represents the total variation of $K$ on $[0,T]$. To achieve the POC, two crucial results are established in Proposition \ref{prop4.2'} and Lemma \ref{convergepsi}. The POC is obtained in Theorem \ref{thm4.3}, together with the convergence rate of  the solution of the IPS \eqref{eq7} to independent copies of the solution of the doubly MRBSDE \eqref{nonlineary}.

A natural question arises regarding the possibility of relaxing the linearity condition (A2). Our second POC result is tailored to address this consideration. The idea is motivated by equation (1.1) of \cite{falkowski2022backward} that the doubly MRBSDE \eqref{nonlineary} could be written as 
\begin{equation}\label{nonlinearyz} 
	\begin{cases}
		Y_t=\xi+\int_t^T f(s,  Y_s)ds-(M_T-M_t)+K_T-K_t, \quad \E[h(Y_t)]\in [l_t,u_t],\quad  t\in[0,T], \vspace{0.2cm}\\
		K=K^+ -K^- \quad\textrm{with}\quad  K^+,K^-\in \mathcal{I}([0,T];\,\mathbb{R}),\vspace{0.2cm}\\
		\int_0^T (\E[h(  Y_t)]-l_t)dK_t^+=\int_0^T (u_t-\E[h(  Y_t)])dK^-_t=0,
	\end{cases}
\end{equation}
and it has a unique solution $(Y,M,K)\in\mathcal{S}^2(\mathbb{F};\,\mathbb{R})\times\mathcal{M}_{loc}(\mathbb{F};\,\mathbb{R})\times\mathcal{A}_{BV}(\mathbb{F};\,\mathbb{R})$, 
under the following conditions:
\vspace{0.2cm}

\noindent (B1) the functions $l,u\in\mathcal{C}([0,T];\,\mathbb{R})$ and 
	$\inf_{t\in[0,T]}(u_t-l_t)>0$;
\vspace{0.2cm}

\noindent (B2) the function $h$ is strictly increasing and bi-Lipschitz, i.e., there exists $0<\gamma_l\leq \gamma_u$ \indent \hspace{0.15cm} such that for any $x,y\in\mathbb{R}$,
$\gamma_l |x-y|\leq |h(x)-h(y)|\leq \gamma_u|x-y|.
$
\vspace{0.2cm}

\noindent 
Here,
$\mathcal{M}_{loc}(\mathbb{F};\mathbb{R})$ stands for the set of continuous $\mathbb{F}$-local martingales  taking values in $\mathbb{R}$;  $\mathcal{A}_{BV}(\mathbb{F};\mathbb{R})$ stands for the set of all continuous, progressively measurable processes $K$  such that $K_0=0$ and $|K|_0^T<\infty$, $\P$-a.s..

The corresponding IPS takes the following form:
\begin{equation}\label{eq7_2_tilde}
	\begin{cases}
		\widetilde{Y}^i_t=\E\Big[\xi^i+\int_t^T f^i(s,\widetilde{Y}^i_s) ds\,\Big|\,\mathcal{F}^{(N)}_t\Big]+(\Psi^{(N)}_T+\Phi^{(N)}_T)-(\widetilde{M}^{(N)}_T-\widetilde{M}^{(N)}_t)+\widetilde{K}^{(N)}_T-\widetilde{K}^{(N)}_t,\vspace{0.2cm}\\
		l_t\leq \frac{1}{N}\sum_{i=1}^N h(\widetilde{Y}^i_t)\leq u_t, \quad t\in[0,T],\vspace{0.2cm}\\
		\widetilde{K}^{(N)}=\widetilde{K}^{(N),+}-\widetilde{K}^{(N),-}\quad\textrm{with}\quad \widetilde{K}^{(N),+},\widetilde{K}^{(N),-}\in \mathcal{A}(\mathbb{F}^{(N)};\mathbb{R}),\vspace{0.2cm}\\
		\int_0^T(\frac{1}{N}\sum_{i=1}^N h(\widetilde{Y}^i_t)-l_t)d\widetilde{K}^{(N),+}_t=\int_0^T (u_t-\frac{1}{N}\sum_{i=1}^N h(\widetilde{Y}^i_t))d\widetilde{K}^{(N),-}_t=0.
	\end{cases}
\end{equation}
Here, $\mathcal{A}(\mathbb{F}^{(N)};\,\mathbb{R})$ stands for the set of all continuous, progressively measurable and nondecreasing processes $K$ such that $K_0=0$ and $K_T<\infty$, $\P$-a.s.. After investigating the special case that the driver $f$ does not depend on $y$ in Theorem \ref{thm3.1}, we obtain the well-posedness of a unique solution $(\{\widetilde{Y}^i\}_{1\leq i\leq N},\widetilde{M}^{(N)},\widetilde{K}^{(N)})\in \mathcal{S}^2(\mathbb{F}^{(N)},\mathbb{R}^N)\times\mathcal{M}_{loc}(\mathbb{F}^{(N)};\mathbb{R})\times \mathcal{A}_{BV}(\mathbb{F}^{(N)};\mathbb{R})$ of the IPS \eqref{eq7_2_tilde} in Theorem \ref{thm4.15}, under Conditions (B1) and (B2).  Finally in Theorem \ref{thm4.35}, for $(Y^i, M^i,K)$ being an
independent copy of the solution to the doubly MRBSDE \eqref{nonlinearyz} and $(\widetilde{Y}^i,\widetilde{M}^{(N)},\widetilde{K}^{(N)})$ solving the IPS \eqref{eq7_2_tilde}, we provide the POC in terms of the convergence of $\widetilde{Y}^i$ to $Y^i$ with distinct speeds under different scenarios.

At last, we compare these two contributions.  Conditions (B1) and (B2) used in the nonlinear reflection case are weaker than Conditions (A1) and (A2) used in the linear reflection case, and correspondingly the reflection process $K$ of the solution to the IPS \eqref{eq7_2_tilde} has less desired properties than that of the IPS \eqref{eq7} since  $\mathcal{A}_{BV}^2(\mathbb{F}^{(N)};\mathbb{R})$ is a subset of $\mathcal{A}_{BV}(\mathbb{F}^{(N)};\mathbb{R})$. Furthermore, the IPS \eqref{eq7} having a $Z$ term in a vector form, has specific application interpretations in stochastic control and financial mathematics. Besides creatively designing these two IPS, in the proofs we innovatively construct the solutions in order to establish their well-posedness, by means of quantities $\overline{U}^i_t$ (resp. $\widetilde{U}^i_t$) in equation \eqref{eqn:barbarU} (resp. \eqref{eqn:tildetildeU}) and $\overline{S}_t$ (resp. $\widetilde{S}_t$) in the RBSDE \eqref{DRBSDE_overline}  (resp. \eqref{eqn:tildetildeS}). Although the proof structures are similar, the nonlinearity reflection condition (B2) loses the desired properties for quantities that could be derived by taking advantage of the linearity reflection condition (A2). We have to resort to alternative estimates obtained such as in Lemma \ref{lempsiphi}.

\subsection{Organization and notation}
\label{sec:organization}
The paper has a simple structure: we achieve POC for doubly MRBSDE with linear reflection in Section \ref{sec:linear_reflection}, and achieve POC for doubly MRBSDE with non-linear reflection in Section \ref{sec:nonlinear_reflection}.  
Throughout the paper, the letter $C$, with or without subscripts, will denote a positive constant whose value may change for different usage. Thus, $C+C = C$ and $CC = C$ are understood in an appropriate sense. Similarly, $C_{\alpha}$ denotes the generic positive constant depending on parameter $\alpha$.

\section{The case of linear reflection}
\label{sec:linear_reflection}
In this section, we first establish the well-posedness of a simplied IPS \eqref{eq7'} in Section \ref{sec:linear_constant}, based on which we establish the well-posedness of the IPS \eqref{eq7} in Section \ref{sec:linear_nonconstant}, and then we achieve POC in Section \ref{sec:linear_Approximation}.

\subsection{The particle system with constant driver}
\label{sec:linear_constant}

We aim to approximate the solution to the doubly MRBSDE \eqref{nonlineary}  using the IPS \eqref{eq7}, recalled here as follows:
\begin{equation*}
	\begin{cases}
		\overline{Y}^i_t=\theta^i+\int_t^T f^i(s,\overline{Y}^i_s) ds-\int_t^T\sum_{j=1}^N \overline{Z}^{i,j}_sd B^j_s+\overline{K}^{(N)}_T-\overline{K}^{(N)}_t, \quad 1\leq i\leq N,\vspace{0.2cm}\\
		l_t\leq \frac{1}{N}\sum_{i=1}^N h(\overline{Y}^i_t)\leq u_t, \quad t\in[0,T],\vspace{0.2cm}\\
		\overline{K}^{(N)}=\overline{K}^{(N),+}-\overline{K}^{(N),-} \quad\textrm{with} \quad \overline{K}^{(N),+},\overline{K}^{(N),-}\in \mathcal{A}^2(\mathbb{F}^{(N)},\mathbb{R}),\vspace{0.2cm}\\
		\int_0^T(\frac{1}{N}\sum_{i=1}^N h(\overline{Y}^i_t)-l_t)d\overline{K}^{(N),+}_t=\int_0^T (u_t-\frac{1}{N}\sum_{i=1}^N h(\overline{Y}^i_t))d\overline{K}^{(N),-}_t=0.
	\end{cases}
\end{equation*}
Here, $\theta^i=\xi^i+\Psi^{(N)}_T+\Phi^{(N)}_T$ is used instead of $\xi^i$ as the terminal value, since we do not have
	\begin{align*}
		l_T\leq \frac{1}{N}\sum_{i=1}^N h(\xi^i)\leq u_T
	\end{align*}
but instead 
	\begin{align*}
		l_T\leq \frac{1}{N}\sum_{i=1}^N h(\overline{Y}^i_T)=\frac{1}{N}\sum_{i=1}^N h(\xi^i+\Psi^{(N)}_T+\Phi^{(N)}_T)\leq u_T.
	\end{align*}
In fact, by the definitions of $\Psi^{(N)}$ and $\Phi^{(N)}$ in \eqref{PsinPhin'}, since $l$ and $u$ are completely separated, i.e., $\inf_{t\in[0,T]}(u_t-l_t)>0$, we have $\Psi^{(N)}_T\Phi^{(N)}_T=0$. Then, only three cases may occur: when $\Psi^{(N)}_T>0$, we have
\begin{align*}
\frac{1}{N}\sum_{i=1}^N h(\xi^i+\Psi^{(N)}_T+\Phi^{(N)}_T)=\frac{1}{N}\sum_{i=1}^N h(\xi^i+\Psi^{(N)}_T)= l_T<u_T;
\end{align*} 
when $\Psi^{(N)}_T=\Phi^{(N)}_T=0$, we have
\begin{align*}
\frac{1}{N}\sum_{i=1}^N h(\xi^i+\Psi^{(N)}_T+\Phi^{(N)}_T)=\frac{1}{N}\sum_{i=1}^N h(\xi^i)\in[l_T, u_T];
\end{align*}
when $\Phi^{(N)}_T<0$, we have
\begin{align*}
\frac{1}{N}\sum_{i=1}^N h(\xi^i+\Psi^{(N)}_T+\Phi^{(N)}_T)=\frac{1}{N}\sum_{i=1}^N h(\xi^i+\Phi^{(N)}_T)= u_T>l_T.
\end{align*}

The IPS \eqref{eq7} is a multi-dimensional doubly RBSDE, whose solution is a family of processes $(\{\overline{Y}^i,\overline{Z}^i\}_{1\leq i\leq N},K^{(N)})$ taking values in $\mathbb{R}^N\times \mathbb{R}^{N\times N}\times \mathbb{R}$.
To establish the strong well-posedness of the IPS \eqref{eq7}, we need to first study its special case that the driver $f$ does not depend on $y$, i.e.,
\begin{equation}\label{eq7'}
	\begin{cases}
		\bar{Y}^i_t=\theta^i+\int_t^T f^i(s) ds-\int_t^T\sum_{j=1}^N \bar{Z}^{i,j}_sd B^j_s+\bar{K}^{(N)}_T-\bar{K}^{(N)}_t, \  1\leq i\leq N,\vspace{0.2cm}\\
		l_t\leq \frac{1}{N}\sum_{i=1}^N h(\bar{Y}^i_t)\leq u_t, \quad t\in[0,T],\vspace{0.2cm}\\
		\bar{K}^{(N)}=\bar{K}^{(N),+}-\bar{K}^{(N),-}\quad \textrm{with}\quad \bar{K}^{(N),+},\bar{K}^{(N),-}\in \mathcal{A}^2(\mathbb{F}^{(N)},\mathbb{R}),\vspace{0.2cm}\\
		\int_0^T(\frac{1}{N}\sum_{i=1}^N h(\bar{Y}^i_t)-l_t)d\bar{K}^{(N),+}_t=\int_0^T (u_t-\frac{1}{N}\sum_{i=1}^N h(\bar{Y}^i_t))d\bar{K}^{(N),-}_t=0.
	\end{cases}
\end{equation}
Theorem \ref{thm5.1} fulfills that purpose and Proposition \ref{prop3.3} provides properties of the solution. 

\begin{theorem}\label{thm5.1}
	Under Conditions (A1) and (A2),  the IPS \eqref{eq7'} has a unique solution $(\{\bar{Y}^i,\bar{Z}^i\}_{1\leq i\leq N},\bar{K}^{(N)})\in (\mathcal{S}^2(\mathbb{F}^{(N)};\,\mathbb{R})\times\mathcal{H}^2(\mathbb{F}^{(N)};\,\mathbb{R}^N))^N\times\mathcal{A}^2_{BV}(\mathbb{F}^{(N)};\,\mathbb{R})$.
\end{theorem}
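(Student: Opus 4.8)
The plan is to reduce the mean-reflected multidimensional RBSDE \eqref{eq7'} to a standard (linear) BSDE by explicitly solving the mean-field Skorokhod problem for the scalar quantity $\frac{1}{N}\sum_{i=1}^N h(\bar Y^i_t)$, exploiting the affine form of $h$ from (A2). Since $h(x)=ax+b$, we have $\frac{1}{N}\sum_{i=1}^N h(\bar Y^i_t)=a\,\overline{Y}_t+b$ where $\overline{Y}_t:=\frac{1}{N}\sum_{i=1}^N \bar Y^i_t$, so the constraint $l_t\le\frac{1}{N}\sum h(\bar Y^i_t)\le u_t$ becomes a two-sided constraint on the single real-valued average $\overline{Y}_t$, and the reflection process $\overline{K}^{(N)}$ is shared by all coordinates. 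First I would consider the ``driving'' BSDE without reflection: let $(\overline{S}_t,\bar Z_t)$ (a scalar process, see the RBSDE \eqref{DRBSDE_overline} alluded to in the introduction) solve $\overline{S}_t=\frac1N\sum_i\theta^i+\int_t^T \frac1N\sum_i f^i(s)\,ds-\int_t^T \bar Z_s\,dB^{(N)}_s$ in an appropriate sense, or more precisely work coordinate-wise: for each $i$ define the unreflected BSDE $\bar U^i_t=\theta^i+\int_t^T f^i(s)\,ds-\int_t^T\sum_j \bar V^{i,j}_s\,dB^j_s$, which is well-posed by classical linear BSDE theory since $\theta^i\in L^2(\mathcal F^{(N)}_T)$ and $\E\int_0^T|f^i(s)|^2ds<\infty$. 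The candidate solution is then $\bar Y^i_t=\bar U^i_t+(\overline{K}^{(N)}_T-\overline{K}^{(N)}_t)$ with a deterministic-in-structure but adapted correction $\overline{K}^{(N)}$ to be determined.

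The key step is to determine $\overline{K}^{(N)}$ so that the average $\overline{Y}_t=\overline{U}_t+\overline{K}^{(N)}_T-\overline{K}^{(N)}_t$ satisfies $a\overline{Y}_t+b\in[l_t,u_t]$ together with the flatness condition, where $\overline{U}_t:=\frac1N\sum_i\bar U^i_t$. Writing $\overline{K}^{(N)}_T-\overline{K}^{(N)}_t=\eta_t$, we need $\eta_t$ of bounded variation, $\eta_T=0$, $t\mapsto\eta_t$ adapted, with $a(\overline{U}_t+\eta_t)+b\in[l_t,u_t]$. I would solve this pathwise as a deterministic Skorokhod problem on $[0,T]$ \emph{run backward in time}: set $p_t:=(l_t-b)/a-\overline{U}_t$ and $q_t:=(u_t-b)/a-\overline{U}_t$ (the lower/upper targets for $\eta$), note $\inf_t(q_t-p_t)=\inf_t(u_t-l_t)/a>0$ by (A1)+(A2), and invoke the classical two-sided Skorokhod map (as in \cite{skorokhod1961stochastic}, or the double-reflection construction of \cite{cvitanic1996backward}) to produce $\eta=\eta^+-\eta^-$ with $\eta^\pm$ nondecreasing (in backward time), $\eta_T=0$, the minimal-push property giving $\int(\eta_t-p_t)\,d\eta^+_t=\int(q_t-\eta_t)\,d\eta^-_t=0$, and the bound $|\eta|_0^T\le C(\|l\|_{BV}+\|u\|_{BV}+\sup_t|\overline{U}_t|)$; since $l,u\in\mathcal{BV}$ and $\sup_t|\overline{U}_t|\in L^2$ (by Doob/BDG applied to the $\bar U^i$), this yields $\overline{K}^{(N),\pm}\in\mathcal A^2(\mathbb F^{(N)};\mathbb R)$. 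Adaptedness of $\eta_t$ follows because the Skorokhod map at time $t$ (backward) depends only on $\{\overline{U}_s,l_s,u_s:s\ge t\}$ — wait, that is the subtlety — so I would instead define $\eta_t$ via the forward-in-time two-sided reflection of the path $s\mapsto\overline{U}_s$ started from the terminal constraint and verify progressive measurability directly from the explicit Skorokhod formula; this is the point needing care. Then set $\bar Z^{i,j}_s:=\bar V^{i,j}_s$; crucially, since $\overline{K}^{(N)}$ has bounded variation and is continuous, adding it does not change the martingale part, so the $Z$-coordinates are unchanged. One checks $\bar Y^i\in\mathcal S^2$ (from $\bar U^i\in\mathcal S^2$ and $\overline{K}^{(N)}\in\mathcal A^2_{BV}$), $\bar Z^i\in\mathcal H^2$, and that $(\{\bar Y^i,\bar Z^i\},\overline{K}^{(N)})$ solves \eqref{eq7'}: the terminal condition holds because $\eta_T=0$ gives $\bar Y^i_T=\theta^i$ and the three-case analysis in the text shows $l_T\le\frac1N\sum h(\theta^i)\le u_T$; the constraint and flatness hold by construction of the Skorokhod map applied to the \emph{average}, and this automatically propagates to each $\bar Y^i$ since the reflection is common.

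For uniqueness, suppose $(\{\bar Y^i,\bar Z^i\},\overline{K}^{(N)})$ and $(\{\bar Y'^i,\bar Z'^i\},\overline{K}'^{(N)})$ both solve \eqref{eq7'}. Averaging the first equation over $i$, both $\overline{Y}$ and $\overline{Y}'$ have the \emph{same} unreflected part $\overline{U}$ (determined by $\theta^i,f^i$) plus their respective corrections; taking the difference, $\overline{Y}_t-\overline{Y}'_t=(\overline{K}^{(N)}_T-\overline{K}^{(N)}_t)-(\overline{K}'^{(N)}_T-\overline{K}'^{(N)}_t)$ is of bounded variation with zero martingale part, and the minimality/flatness conditions force $\overline{K}^{(N)}=\overline{K}'^{(N)}$ by the standard uniqueness of the two-sided Skorokhod map (a Gronwall/integration-by-parts argument on $|\overline{Y}_t-\overline{Y}'_t|^2$ using $\int(\overline{Y}_t-p_t)\,d(\text{increasing})\ge0$ type inequalities). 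Given $\overline{K}^{(N)}=\overline{K}'^{(N)}$, each $\bar Y^i$ then solves the \emph{same} linear BSDE with the same data, so $\bar Y^i=\bar Y'^i$ and $\bar Z^i=\bar Z'^i$ by uniqueness for linear BSDEs. The main obstacle I anticipate is the adaptedness/measurability of the Skorokhod correction $\eta$: the two-sided reflection must be set up so that $\overline{K}^{(N)}_t$ is $\mathcal F^{(N)}_t$-measurable despite being built from a terminal-value problem — this is handled exactly as in the single-reflection mean-reflected BSDE literature (\cite{briand2018bsdes}), by noting that the correction, though it ``looks backward,'' can be written through the running supremum/infimum of an adapted continuous semimartingale against deterministic $l,u$, hence is itself adapted with square-integrable total variation.
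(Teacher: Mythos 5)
Your overall strategy---decompose $\bar Y^i_t=\bar U^i_t+(\text{common correction})$ with $\bar U^i_t=\E[\xi^i+\int_t^Tf^i(s)\,ds\mid\mathcal F^{(N)}_t]$, and use the affine form of $h$ to turn the constraint on $\frac1N\sum_ih(\bar Y^i_t)$ into a two-sided barrier for the scalar correction---is exactly the paper's. But the construction of the correction term has a genuine gap. You posit $\bar Y^i_t=\bar U^i_t+\eta_t$ with $\eta_t=\overline K^{(N)}_T-\overline K^{(N)}_t$ obtained from a \emph{pathwise deterministic} two-sided Skorokhod map. This cannot work: for the particle system the reflection process $\overline K^{(N)}$ is genuinely random ($\mathbb F^{(N)}$-adapted but not deterministic), so $\overline K^{(N)}_T-\overline K^{(N)}_t$ is $\mathcal F^{(N)}_T$-measurable, not $\mathcal F^{(N)}_t$-measurable. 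Taking conditional expectations in the BSDE shows the adapted correction must be $\bar S_t=\E[\Psi^{(N)}_T+\Phi^{(N)}_T+\overline K^{(N)}_T\mid\mathcal F^{(N)}_t]-\overline K^{(N)}_t$, which carries a nontrivial martingale part $\int_t^T\sum_j\bar Z^{(N),j}_s\,dB^j_s$. In other words the correction is the first component of a genuine \emph{doubly reflected BSDE} in the sense of Cvitani\'c--Karatzas, with random obstacles $\bar\psi^{(N)}_t,\bar\phi^{(N)}_t$ defined by $\frac1N\sum_ih(\bar U^i_t+\bar\psi^{(N)}_t)=l_t$ and $\frac1N\sum_ih(\bar U^i_t+\bar\phi^{(N)}_t)=u_t$; its adaptedness comes from the Dynkin-game representation, not from a pathwise Skorokhod formula. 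You flag the adaptedness issue yourself, but the proposed fix (``forward-in-time reflection started from the terminal constraint'', by analogy with the single-mean-reflection case where $K$ is deterministic) does not transfer: here the constraint is anchored at $T$ and the obstacles depend on the random paths $\bar U^i$, so there is no adapted bounded-variation $\eta$ with $\eta_T=0$ doing the job without a martingale component. A downstream consequence is that your claim $\bar Z^{i,j}=\bar V^{i,j}$ (the $Z$ of the unreflected equation) is also wrong: the true $\bar Z^{i,j}$ picks up the additional term $\bar Z^{(N),j}$ from the martingale part of $\bar S$.

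The uniqueness sketch has a related flaw: you assert that $\overline Y_t-\overline Y'_t$ has ``zero martingale part,'' but the difference of two solutions contains $-\int_t^T\sum_j(\bar Z^{j}_s-\bar Z'^{j}_s)\,dB^j_s$, which need not vanish a priori. This could be repaired (e.g.\ by uniqueness for the Cvitani\'c--Karatzas DRBSDE once the obstacles are identified, or by the paper's stopping-time argument using strict monotonicity of $h$ and the Skorokhod conditions to force $d\bar{\bar K}^{(N),+}=d\bar K^{(N),-}=0$ on the set where the solutions differ), but as written it rests on the same incorrect picture of the correction as a pure bounded-variation term.
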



\begin{proof}
	We first prove the existence. 
	Our strategy is to construct $\bar{Y}^i_t$ as follows and show that it is a solution to the IPS \eqref{eq7'}:  
	\begin{align}
		\label{eqn:Y_decompose}
		\bar{Y}^i_t=\bar{U}^{i}_t+\bar{S}_t, \qquad 1\leq i\leq N,\qquad t\in[0,T].
	\end{align}
	Here, $\bar{U}^{i}_t$ is defined as
	\begin{align}
		\bar{U}^{i}_t:=\E\Bigg[\xi^i+\int_t^T f^i(s) ds\,\Bigg|\, \mathcal{F}^{(N)}_t\Bigg],
	\end{align}
	and $\bar{S}$ is the first component of the solution to the following RBSDE with double reflections
	\begin{equation}\label{DRBSDE}
		\begin{cases}
			\bar{S}_t=\Psi^{(N)}_T+\Phi^{(N)}_T-\int_t^T \sum_{j=1}^N\bar{Z}^{(N),j}_s dB^j_s+(\bar{K}^{(N)}_T-\bar{K}^{(N)}_t),\vspace{0.2cm}\\
			\bar{\psi}^{(N)}_t\leq \bar{S}_t\leq \bar{\phi}^{(N)}_t, \quad t\in[0,T],\vspace{0.2cm}\\
			\bar{K}^{(N)}=\bar{K}^{(N),+}-\bar{K}^{(N),-} \quad\textrm{ with }\quad \bar{K}^{(N),+},\bar{K}^{(N),-}\in \mathcal{A}^2(\mathbb{F}^{(N)},\mathbb{R}),\vspace{0.2cm}\\
			\int_0^T(\bar{S}_t-\bar{\psi}^{(N)}_t)d\bar{K}^{(N),+}_t=\int_0^T(\bar{\phi}^{(N)}_t-\bar{S}_t)d\bar{K}^{(N),-}_t=0,
		\end{cases}
	\end{equation}
	where $\bar{\psi}^{(N)}_t$ and $\bar{\phi}^{(N)}_t$ are $\mathcal{F}^{(N)}_t$-measurable random variables satisfying respectively
	\begin{align*}
		\frac{1}{N}\sum_{i=1}^N h(\bar{U}^{i}_t+\bar{\psi}^{(N)}_t)=l_t\quad\text{and}\quad \frac{1}{N}\sum_{i=1}^N h(\bar{U}^{i}_t+\bar{\phi}^{(N)}_t)=u_t.
	\end{align*}
	Since $h$ satisfies (A2), we have
	\begin{equation}\label{phinpsin}\begin{split}
			&\bar{\psi}^{(N)}_t=l_t-\frac{a}{N}\sum_{i=1}^N\bar{U}^i_t-b=l_t-b-\frac{a}{N}\sum_{i=1}^N\E\Bigg[\xi^i+\int_t^T f^i(s)ds\,\Bigg|\,\mathcal{F}^{(N)}_t\Bigg],\\
			&\bar{\phi}^{(N)}_t=u_t-\frac{a}{N}\sum_{i=1}^N\bar{U}^i_t-b=u_t-b-\frac{a}{N}\sum_{i=1}^N\E\Bigg[\xi^i+\int_t^T f^i(s)ds\,\Bigg|\,\mathcal{F}^{(N)}_t\Bigg].
	\end{split}\end{equation}
	
	By Corollary 5.5 in \cite{cvitanic1996backward}, the doubly RBSDE \eqref{DRBSDE} admits a unique solution $(\bar{S},\bar{Z}^{(N)},\bar{K}^{(N)})\in \mathcal{S}^2(\mathbb{F}^{(N)};\,\mathbb{R})\times \mathcal{H}^2(\mathbb{F}^{(N)};\,\mathbb{R}^N)\times\mathcal{A}^2_{BV}(\mathbb{F}^{(N)};\,\mathbb{R})$ and 
	\begin{align}\label{representationofS}
		\bar{S}_t=\essinf_{\sigma\in\mathcal{T}^{(N)}_{t,T}}\esssup_{\tau\in\mathcal{T}^{(N)}_{t,T}}\E\Big[\bar{R}^{(N)}(\sigma,\tau)\,\Big|\,\mathcal{F}^{(N)}_t\Big]=\esssup_{\tau\in\mathcal{T}^{(N)}_{t,T}}\essinf_{\sigma\in\mathcal{T}^{(N)}_{t,T}}\E\Big[\bar{R}^{(N)}(\sigma,\tau)\,\Big|\,\mathcal{F}^{(N)}_t\Big],  
	\end{align}
	where $\mathcal{T}^{(N)}_{t,T}$ denotes the set of all $\mathbb{F}^{(N)}$-stopping times taking values in $[t,T]$ and
	\begin{equation}
		\bar{R}^{(N)}(\sigma,\tau):=(\Psi^{(N)}_T+\Phi^{(N)}_T)\mathbbm{1}_{\{\sigma\wedge \tau=T\}}+\bar{\psi}^{(N)}_\tau \mathbbm{1}_{\{\tau<T,\tau\leq \sigma\}}+\bar{\phi}^{(N)}_\sigma \mathbbm{1}_{\{\sigma<\tau\}}
	\end{equation}
with $\Psi^{(N)}_T$ and $\Phi^{(N)}_T$ being two $\mathcal{F}^{(N)}_T$-measurable random variables defined in equation \eqref{PsinPhin'}.
Denote $\Pi^2_c(\mathbb{F}^{(N)};\,\mathbb{R})$ as the set of all continuous, nonnegative $\mathbb{F}^{(N)}$-supermartingale $M$ taking values in $\mathbb{R}$  such that $\E[\sup_{t\in[0,T]}|M_t|^2]<\infty$. 
	By equation \eqref{phinpsin}, there exist two processes $H^l,H^u\in \Pi^2_c(\mathbb{F}^{(N)};\,\mathbb{R})$, such that for $t\in[0,T]$,
	\begin{align*}
		\bar{l}^{(N)}_t\leq H^l_t-H^u_t\leq \bar{u}^{(N)}_t, 
	\end{align*}
	where \begin{equation}
		\label{eqn:lN_uN}
		\begin{split} 		
			&\bar{l}^{(N)}_t:=\bar{\psi}^{(N)}_t \mathbbm{1}_{\{t<T\}}+\Big(\Psi^{(N)}_T+\Phi^{(N)}_T\Big)\mathbbm{1}_{\{t=T\}}-\E\Big[\Psi^{(N)}_T+\Phi^{(N)}_T\,\Big|\,\mathcal{F}^{(N)}_t\Big],\\ 
			&\bar{u}^{(N)}_t:=\bar{\phi}^{(N)}_t \mathbbm{1}_{\{t<T\}}+\Big(\Psi^{(N)}_T+\Phi^{(N)}_T\Big)\mathbbm{1}_{\{t=T\}}-\E\Big[\Psi^{(N)}_T+\Phi^{(N)}_T\,\Big|\,\mathcal{F}^{(N)}_t\Big],
		\end{split}
	\end{equation}
	and for $C$ being a constant independent of $N$,
	\begin{equation}\label{hlhu}\begin{split}
		&\hspace{-0.3cm}\max(|H^l_t|,|H^u_t|)\\&\leq C\Bigg\{\frac{1}{N}\sum_{i=1}^N\E\Bigg[|\xi^i|+\int_0^T |f^i(s)|ds\,\Bigg|\,\mathcal{F}^{(N)}_t\Bigg]+\frac{1}{N}\sum_{i=1}^N\int_0^t |f^i(s)|ds+|{l}|_0^T\vee|{u}|_0^T+1\Bigg\}.
	\end{split}\end{equation}
	By the proof of Theorem 5.3 in \cite{cvitanic1996backward}, we have for any $t\in[0,T]$,
	\begin{equation}\label{barKNT}\begin{split}
		&(\bar{l}^{(N)}_t)^+\leq 
		\E\Big[\bar{K}^{(N),+}_T\,\Big|\,\mathcal{F}^{(N)}_t\Big]-\bar{K}^{(N),+}_t
		\leq H^l_t,\\
		& (\bar{u}^{(N)}_t)^-\leq \E\Big[\bar{K}^{(N),-}_T\,\Big|\,\mathcal{F}^{(N)}_t\Big]-\bar{K}^{(N),-}_t\leq H^u_t.
	\end{split}\end{equation}
	Additionally, the proof of Lemma A.1 in \cite{cvitanic1996backward} implies that for any $A \in \mathcal{A}^2(\mathbb{F}^{(N)};\,\mathbb{R})$,
	\begin{align}\label{AT}
		\E|A_T|^2\leq 4 \E\left[\sup_{t\in[0,T]}\Big|\E\Big[A_T\,\big|\,\mathcal{F}^{(N)}_t\Big]-A_t\Big|^2\right].
	\end{align}

For $\Psi^{(N)}_T$ and $\Phi^{(N)}_T$ being two $\mathcal{F}^{(N)}_T$-measurable random variables defined in equation \eqref{PsinPhin'}, it is easy to check that they are square integrable, and there exists a constant $C$ independent of $N$, such that
\begin{equation}\label{PsinPhin}
	\E|\Psi^{(N)}_T|^2\leq C\big(1+\E|\xi|^2\big)\quad \text{and}\quad \E|\Phi^{(N)}_T|^2\leq C\big(1+\E|\xi|^2\big).
\end{equation}
	By equations \eqref{phinpsin} and \eqref{eqn:lN_uN}-\eqref{PsinPhin}, there exists a constant $C$ independent of $N$, such that 
	\begin{equation}
		\label{k+k-}
		\begin{split}
			\E|\bar{K}^{(N),+}_T|^2&\leq  C\Bigg(1+\E\Bigg[|\xi|^2+\int_0^T |f(s)|^2ds\Bigg]\Bigg),\\
			\E|\bar{K}^{(N),-}_T|^2&\leq  C\Bigg(1+\E\Bigg[|\xi|^2+\int_0^T |f(s)|^2ds\Bigg]\Bigg).
		\end{split}
	\end{equation}

	By equation \eqref{eqn:Y_decompose}, we have
	\begin{align*}
		\bar{Y}^i_t&=\E\Bigg[\xi^i+\int_t^T f^i(s) ds\,\Bigg|\,\mathcal{F}^{(N)}_t\Bigg]+\E\Bigg[(\Psi^{(N)}_T+\Phi^{(N)}_T)+\bar{K}^{(N),+}_T-\bar{K}^{(N),-}_T\,\Bigg|\,\mathcal{F}^{(N)}_t\Bigg]\\
		&\hspace{9cm}-(\bar{K}^{(N),+}_t-\bar{K}^{(N),-}_t)\\
		&=\E\Bigg[\theta^i+\int_0^T f^i(s) ds +\bar{K}^{(N),+}_T-\bar{K}^{(N),-}_T\,\Bigg|\,\mathcal{F}^{(N)}_t\Bigg]-\int_0^t f^i(s)ds-(\bar{K}^{(N),+}_t-\bar{K}^{(N),-}_t),
	\end{align*}
 where we reorganized terms and used equation \eqref{eqn:theta_def}.
	By  equations \eqref{PsinPhin} and \eqref{k+k-}, we have 
	\begin{align*}
		\theta^i+\int_0^T f^i(s) ds +\bar{K}^{(N),+}_T-\bar{K}^{(N),-}_T\in L^2(\mathcal{F}^{(N)}_T;\mathbb{R}).
	\end{align*}
	By the martingale representation theorem, there exists $\bar{Z}^i\in \mathcal{H}^2(\mathbb{F}^{(N)};\,\mathbb{R}^N)$ such that 
	\begin{align*}
		\bar{Y}^i_t=&\E\Bigg[\theta^i+\int_0^T f^i(s) ds +\bar{K}^{(N),+}_T-\bar{K}^{(N),-}_T\Bigg]+\int_0^t \sum_{j=1}^N\bar{Z}^{i,j}_sdB^j_s-\int_0^tf^i(s)ds\\
		&\hspace{7cm}-(\bar{K}^{(N),+}_t-\bar{K}^{(N),-}_t).
	\end{align*}
	
	It remains to show that $\{\bar{Y}^i\}_{1\leq i\leq N}$ satisfies the IPS \eqref{eq7'} and $\bar{K}^{(N),+}$, $\bar{K}^{(N),-}$ satisfy the Skorokhod condition. Since $h$ is nondecreasing and $\bar{\psi}^{(N)}_t\leq \bar{S}_t\leq \bar{\phi}^{(N)}_t$ for $t\in[0,T]$, it is easy to check that
	\begin{align*}
		l_t=\frac{1}{N}\sum_{i=1}^N h(\bar{U}^{i}_t+\bar{\psi}^{(N)}_t)&\leq \frac{1}{N}\sum_{i=1}^N h(\bar{U}^i_t+\bar{S}_t)\\
		&=\frac{1}{N}\sum_{i=1}^N h(\bar{Y}^i_t)\leq \frac{1}{N}\sum_{i=1}^N h(\bar{U}^{i}_t+\bar{\phi}^{(N)}_t)=u_t.
	\end{align*}
	Applying the Skorokhod condition in the doubly RBSDE \eqref{DRBSDE} yields that
	\begin{align*}
		\int_0^T\Bigg(\frac{1}{N}\sum_{i=1}^N h(\bar{Y}^i_t)-l_t\Bigg)d\bar{K}^{(N),+}_t
		=&\int_0^T \Bigg(\frac{1}{N}\sum_{i=1}^N h(\bar{U}^{i}_t+\bar{S}_t)-l_t\Bigg)d\bar{K}^{(N),+}_t\\
		=&\int_0^T \Bigg(\frac{1}{N}\sum_{i=1}^N h(\bar{U}^{i}_t+\bar{S}_t)-l_t\Bigg)\mathbbm{1}_{\{\bar{S}_t=\bar{\psi}^{(N)}_t\}}d\bar{K}^{(N),+}_t\\
		=&\int_0^T\Bigg(\frac{1}{N}\sum_{i=1}^N h(\bar{U}^{i}_t+\bar{\psi}^{(N)}_t)-l_t\Bigg)\mathbbm{1}_{\{\bar{S}_t=\bar{\psi}^{(N)}_t\}}d\bar{K}^{(N),+}_t\\
		=&0,
	\end{align*}
	and similarly we have
	\begin{align*}
		\int_0^T\frac{1}{N}\Bigg(u_t-\sum_{i=1}^N h(\bar{Y}^i_t)\Bigg)d\bar{K}^{(N),-}_t=0
	\end{align*}
 as desired.
 
Now, we are ready to prove the uniqueness.
	Suppose that $(\bar{\bar{Y}}^i,\bar{\bar{Z}}^i\}_{1\leq i\leq N},\bar{\bar{K}}^{(N)})$ is another solution to the IPS \eqref{eq7'}. Without loss of generality, we assume that there exists a pair $(i,t)\in\{1,\cdots,N\}\times[0,T)$, such that $\P(\bar{A})>0$ where $\bar{A}=\{\bar{\bar{Y}}^i_t>\bar{Y}^i_t\}$. 
 Define the stopping time 
	\begin{align*}
		\bar{\tau}:=\inf\Big\{u\geq t:\bar{\bar{Y}}^i_u=\bar{Y}^i_u\Big\}.
	\end{align*}
	Then, on the set $\bar{A}$, we have $t<\bar{\tau}\leq T$ and $\bar{\bar{Y}}^i_s>\bar{Y}^i_s$ for $s\in[t,\bar{\tau})$. Since $h$ is strictly increasing, on the set $\bar{A}$, for any $s\in[t,\bar{\tau})$ we have
	\begin{align*}
		u_s\geq \frac{1}{N}\sum_{j=1}^Nh(\bar{\bar{Y}}^j_s)>\frac{1}{N}\sum_{j=1}^Nh(\bar{Y}^j_s)\geq l_s. 
	\end{align*}
	It follows from the Skorokhod condition that $d\bar{\bar{K}}^{(N),+}_s=0$ and $d\bar{K}^{(N),-}_s=0$ for $s\in[t,\bar{\tau})$ on the set $\bar{A}$. It is clear that 
	\begin{align*}
		\bar{Y}^i_t-\bar{\bar{Y}}^i_t=&\bar{Y}^i_\tau-\bar{\bar{Y}}^i_\tau-\int_t^\tau \sum_{j=1}^N(\bar{Z}^{i,j}_s-\bar{\bar{Z}}^{i,j}_s)dB^j_s+(\bar{K}^{(N),+}_\tau-\bar{K}^{(N),+}_t)-(\bar{K}^{(N),-}_\tau-\bar{K}^{(N),-}_t)\\
		&-(\bar{\bar{K}}^{(N),+}_\tau-\bar{\bar{K}}^{(N),+}_t)+(\bar{\bar{K}}^{(N),-}_\tau-\bar{\bar{K}}^{(N),-}_t)\\
		=&-\int_t^\tau \sum_{j=1}^N(\bar{Z}^{i,j}_s-\bar{\bar{Z}}^{i,j}_s)dB^j_s+(\bar{K}^{(N),+}_\tau-\bar{K}^{(N),+}_t)-(\bar{K}^{(N),-}_\tau-\bar{K}^{(N),-}_t)\\
		&-(\bar{\bar{K}}^{(N),+}_\tau-\bar{\bar{K}}^{(N),+}_t)+(\bar{\bar{K}}^{(N),-}_\tau-\bar{\bar{K}}^{(N),-}_t).
	\end{align*}
	 Then
	\begin{align*}
		(\bar{Y}^i_t-\bar{\bar{Y}}^i_t)\mathbbm{1}_{\bar{A}}=&-\int_t^\tau \sum_{j=1}^N(\bar{Z}^{i,j}_s-\bar{\bar{Z}}^{i,j}_s)dB^j_s\mathbbm{1}_{\bar{A}}+(\bar{K}^{(N),+}_\tau-\bar{K}^{(N),+}_t)\mathbbm{1}_{\bar{A}}\\
  &+(\bar{\bar{K}}^{(N),-}_\tau-\bar{\bar{K}}^{(N),-}_t)\mathbbm{1}_{\bar{A}}.
	\end{align*}
	Taking expectations on both sides, noting that $\bar{A}\in\mathcal{F}^{(N)}_t$ and $\bar{K}^{(N),+}$, $\bar{\bar{K}}^{(N),-}$ are nondecreasing, we have
	\begin{align*}
		0>\E\Big[(\bar{Y}^i_t-\bar{\bar{Y}}^i_t)\mathbbm{1}_{\bar{A}}\Big]=\E\Big[(\bar{K}^{(N),+}_\tau-\bar{K}^{(N),+}_t)\mathbbm{1}_{\bar{A}}+(\bar{\bar{K}}^{(N),-}_\tau-\bar{\bar{K}}^{(N),-}_t)\mathbbm{1}_{\bar{A}}\Big]\geq 0,
	\end{align*}
	which is a contradiction. Hence, $\bar{Y}^i\equiv\bar{\bar{Y}}^i$ for $1\leq i\leq N$. Since $\bar{Y}^i$ is an $\mathbb{F}^{(N)}$-semimartingale, its decomposition is unique. Therefore, we have $\bar{K}^{(N)}=\bar{\bar{K}}^{(N)}$ and $\bar{Z}=\bar{\bar{Z}}$, which completes the proof.
\end{proof}


\begin{proposition}\label{prop3.3}
	Under Conditions (A1) and (A2), there exists a constant $C$ independent of $N$ such that for all $1\leq i\leq N$,
	\begin{align*}
		\E\left[\sup_{t\in[0,T]}|\bar{Y}^i_t|^2\right]+\E\left[\int_0^T |\bar{Z}^i_t|^2dt\right]&+\E|\bar{K}^{(N),+}_T|^2+\E|\bar{K}^{(N),-}_T|^2\\
		\leq &C\Bigg(1+\E|\xi|^2+\E\left[\int_0^T|f(t)|^2dt\right]\Bigg),
	\end{align*}
 where $(\{\bar{Y}^i,\bar{Z}^i\}_{1\leq i\leq N},\bar{K}^{(N)})$ is the solution to the IPS \eqref{eq7'}.
\end{proposition}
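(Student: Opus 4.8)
The plan is to build on the explicit decomposition $\bar{Y}^i_t=\bar{U}^i_t+\bar{S}_t$ constructed in the proof of Theorem \ref{thm5.1}, and on the estimates \eqref{PsinPhin} and \eqref{k+k-} already obtained there. In particular, \eqref{k+k-} already delivers the bounds for $\E|\bar{K}^{(N),+}_T|^2$ and $\E|\bar{K}^{(N),-}_T|^2$ with a constant independent of $N$, so only the $\bar{Y}^i$ and $\bar{Z}^i$ terms remain to be treated.

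First I would estimate $\E[\sup_{t\in[0,T]}|\bar{U}^i_t|^2]$. Writing $\bar{U}^i_t=\E[\xi^i+\int_0^T f^i(s)\,ds\,|\,\mathcal{F}^{(N)}_t]-\int_0^t f^i(s)\,ds$, the first term is an $L^2$-bounded martingale, so Doob's maximal inequality and the Cauchy--Schwarz inequality give $\E[\sup_t|\bar{U}^i_t|^2]\le C(\E|\xi|^2+\E\int_0^T|f(s)|^2\,ds)$, using that $(\xi^i,f^i)$ is distributed as $(\xi,f)$; this constant does not depend on $N$ nor on $i$. Next, for $\bar{S}$ I would use $\bar{\psi}^{(N)}_t\le\bar{S}_t\le\bar{\phi}^{(N)}_t$, which forces $|\bar{S}_t|\le|\bar{\psi}^{(N)}_t|\vee|\bar{\phi}^{(N)}_t|$; plugging in the explicit formulas \eqref{phinpsin} yields $\sup_t|\bar{S}_t|\le|l|_0^T\vee|u|_0^T+|b|+\frac{a}{N}\sum_{i=1}^N\sup_t|\bar{U}^i_t|$, the first term being finite by Condition (A1). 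Squaring, applying the Cauchy--Schwarz inequality to the average $\frac1N\sum_i$, taking expectations and inserting the previous bound for each $\E[\sup_t|\bar{U}^i_t|^2]$, I obtain $\E[\sup_t|\bar{S}_t|^2]\le C(1+\E|\xi|^2+\E\int_0^T|f(s)|^2\,ds)$. Since $\bar{Y}^i=\bar{U}^i+\bar{S}$, combining the two estimates controls $\E[\sup_t|\bar{Y}^i_t|^2]$ by the right-hand side of the claim.

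For the $\bar{Z}^i$ term I would evaluate the first line of \eqref{eq7'} at $t=0$, which gives $\int_0^T\sum_{j=1}^N\bar{Z}^{i,j}_s\,dB^j_s=\theta^i+\int_0^T f^i(s)\,ds+\bar{K}^{(N)}_T-\bar{Y}^i_0$. By It\^o's isometry (the $B^j$ being independent) the left-hand side has second moment $\E\int_0^T|\bar{Z}^i_s|^2\,ds$, so this quantity is bounded by a constant times $\E|\theta^i|^2+\E\int_0^T|f(s)|^2\,ds+\E|\bar{K}^{(N)}_T|^2+\E|\bar{Y}^i_0|^2$. Here $\E|\theta^i|^2\le C(1+\E|\xi|^2)$ by \eqref{PsinPhin}, $\E|\bar{K}^{(N)}_T|^2\le2\E|\bar{K}^{(N),+}_T|^2+2\E|\bar{K}^{(N),-}_T|^2$ is controlled by \eqref{k+k-}, and $\E|\bar{Y}^i_0|^2\le\E[\sup_t|\bar{Y}^i_t|^2]$ was bounded in the previous paragraph. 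Summing the four resulting inequalities yields the claim. The argument involves no serious obstacle; the only point that needs care is keeping every constant independent of $N$, which works precisely because the $(\xi^i,f^i)$ are i.i.d.\ copies of $(\xi,f)$ --- so that the averaged quantities entering $\bar{\psi}^{(N)}$, $\bar{\phi}^{(N)}$ and $\theta^i$ reduce to single $N$-free expectations --- and because the obstacles $l,u$ are bounded by Condition (A1).
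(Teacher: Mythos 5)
Your proposal is correct and follows essentially the same route as the paper: the same decomposition $\bar{Y}^i=\bar{U}^i+\bar{S}$ with $\bar{S}$ controlled through the obstacles via the explicit formulas \eqref{phinpsin}, and the same reuse of \eqref{PsinPhin} and \eqref{k+k-} for $\theta^i$ and $\bar{K}^{(N),\pm}_T$. The only deviation is in the $\bar{Z}^i$ estimate, where you isolate the stochastic integral at $t=0$ and invoke the It\^o isometry rather than applying It\^o's formula to $|\bar{Y}^i_t|^2$ as the paper does; this is an equivalent, slightly more elementary computation that avoids the cross terms $\int_0^T\bar{Y}^i_s f^i(s)\,ds$ and $\int_0^T\bar{Y}^i_s\,d\bar{K}^{(N)}_s$ and closes with exactly the same ingredients.
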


\begin{proof}
	By the proof of Theorem \ref{thm5.1}, we have 
	\begin{align*}
		|\bar{Y}^i_t|&\leq |\bar{U}^{i}_t|+|\bar{S}_t|\\
		&\leq \E\Bigg[\Bigg|\xi^i+\int_t^T f^i(s) ds\Bigg|\,\Bigg|\,\mathcal{F}^{(N)}_t\Bigg]+\E\Bigg[\sup_{t\in[0,T]}|\bar{\psi}^{(N)}_t|+\sup_{t\in[0,T]}|\bar{\phi}^{(N)}_t|\,\Bigg|\,\mathcal{F}^{(N)}_t\Bigg].
	\end{align*}
	By equation \eqref{phinpsin}, the estimate for $\bar{Y}^i$ follows from Doob's inequality and H\"{o}lder's inequality. The estimate for $\bar{K}^{(N),+}$ and $\bar{K}^{(N),-}$ are given in equation \eqref{k+k-}. Applying It\^{o}'s formula, it is easy to check that 
	\begin{align*}
		|\bar{Y}^i_0|^2+\int_0^T |\bar{Z}^i_s|^2 ds=|\xi^i|^2+2\int_0^T \bar{Y}^i_s f^i(s)ds+2\int_0^T \bar{Y}^i_sd\bar{K}^{(N)}_s-2\int_0^T \bar{Y}^i_s\sum_{j=1}^{N}\bar{Z}^{i,j}_sdB^j_s.
	\end{align*}
	Taking expectations on both sides yields that 
	\begin{align*}
		&\hspace{-0.3cm}\E \left[\int_0^T |\bar{Z}^i_s|^2 ds\right] \\
		\leq &\E\Bigg[|\xi^i|^2+2\int_0^T \bar{Y}^i_s f^i(s)ds+2\int_0^T \bar{Y}^i_sd\bar{K}^{(N)}_s\Bigg]\\
		\leq &\E|\xi|^2+2\E\Bigg[\sup_{t\in[0,T]}|\bar{Y}^i_t|\int_0^T|f^i(s)|ds\Bigg]+2\E\Bigg[\sup_{t\in[0,T]}|\bar{Y}^i_s|\Big(|\bar{K}^{(N),+}_T|+|\bar{K}^{(N),-}_T|\Big)\Bigg]\\
		\leq &\E|\xi|^2+2\E \left[\sup_{t\in[0,T]}|\bar{Y}^i_t|^2\right] +T\E\left[ \int_0^T|f^i(s)|^2ds\right] +\E \Big(|\bar{K}^{(N),+}_T|+|\bar{K}^{(N),-}_T|\Big)^2 ,
	\end{align*}
 where we used H\"{o}lder's inequality in the last inequality.
	Using the estimates for $\bar{Y}^i$, $\bar{K}^{(N),+}$ and $\bar{K}^{(N),-}$, we obtain the desired result.
\end{proof}

\subsection{The particle system with non-constant driver}
\label{sec:linear_nonconstant}

In this subsection, we consider the general IPS \eqref{eq7}.
\begin{theorem}\label{thm4.1}
	Under Conditions (A1) and (A2),  the  IPS \eqref{eq7} has a unique solution $(\{\overline{Y}^i,\overline{Z}^i\}_{1\leq i\leq N},\overline{K}^{(N)})\in (\mathcal{S}^2(\mathbb{F}^{(N)};\,\mathbb{R})\times\mathcal{H}^2(\mathbb{F}^{(N)};\,\mathbb{R}^N))^N\times\mathcal{A}^2_{BV}(\mathbb{F}^{(N)};\,\mathbb{R})$. 
\end{theorem}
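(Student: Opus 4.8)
The strategy is to pass from the constant-driver IPS \eqref{eq7'}, whose well-posedness is Theorem \ref{thm5.1}, to the $y$-dependent IPS \eqref{eq7} by a fixed-point argument in the spirit of the standard Picard iteration for Lipschitz BSDEs. Concretely, I would define a map $\Theta$ on the space $(\mathcal{S}^2(\mathbb{F}^{(N)};\,\mathbb{R}))^N$ as follows: given a family $\{V^i\}_{1\le i\le N}$, replace the driver $f^i(s,\overline{Y}^i_s)$ by the frozen input $f^i(s,V^i_s)$, which is then a process not depending on the unknown; this input satisfies $\E[\int_0^T|f^i(s,V^i_s)|^2 ds]<\infty$ by the Lipschitz bound \eqref{eqn:Lipschitz} and $\E[\int_0^T|f^i(\cdot,0)|^2]<\infty$. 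Theorem \ref{thm5.1} then provides a unique solution $(\{\overline{Y}^i,\overline{Z}^i\}_i,\overline{K}^{(N)})$ of the resulting constant-driver system \eqref{eq7'} with this input, and I set $\Theta(\{V^i\}_i)=\{\overline{Y}^i\}_i$. A fixed point of $\Theta$ is precisely a solution of \eqref{eq7}, and uniqueness for \eqref{eq7} will follow from uniqueness of the fixed point.

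The core estimate is a contraction (or $\beta$-weighted norm) bound: taking two inputs $\{V^i\}_i,\{V'^i\}_i$ with outputs $\{\overline{Y}^i\}_i,\{\overline{Y}'^i\}_i$, and writing $\delta Y^i=\overline{Y}^i-\overline{Y}'^i$, $\delta K^{(N)}=\overline{K}^{(N)}-\overline{K}'^{(N)}$, etc., I would apply It\^o's formula to $e^{\beta t}|\delta Y^i_t|^2$, sum over $i$, and exploit the key sign cancellation coming from the Skorokhod conditions: on the support of $d\overline{K}^{(N),+}$ one has $\frac1N\sum_i h(\overline{Y}^i_t)=l_t$, so the monotonicity of $h$ forces the cross term $\sum_i (\delta Y^i_t)\, d\delta K^{(N)}_t$ to have a favorable sign after summation — this is the same mechanism used in the uniqueness argument of Theorem \ref{thm5.1}. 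More precisely, since $\overline{K}^{(N)}$ is common to all particles, $\sum_i \delta Y^i_t\, d\delta K^{(N)}_t = N\,\overline{\delta Y}_t\, d\delta K^{(N)}_t$ where $\overline{\delta Y}_t=\frac1N\sum_i \delta Y^i_t$; and because $h$ is affine and increasing under (A2), $\overline{\delta Y}_t$ controls $\frac1N\sum_i h(\overline{Y}^i_t)-\frac1N\sum_i h(\overline{Y}'^i_t)$ up to the constant $a$, so the Skorokhod conditions make this term nonpositive (in the relevant direction) after integration. The remaining terms are handled by Lipschitz continuity of $f$ and Young's inequality, yielding $\sum_i \|\delta Y^i\|_{\beta}^2 \le \frac{C}{\beta}\sum_i \|\delta V^i\|_{\beta}^2$ for the weighted $\mathcal{S}^2$ (or $\mathcal{H}^2$) norm; choosing $\beta$ large gives a strict contraction on the product space.

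Once $\Theta$ is shown to be a contraction, the Banach fixed point theorem yields a unique $\{\overline{Y}^i\}_i$, and then the associated $\overline{Z}^i\in\mathcal{H}^2$ and $\overline{K}^{(N)}\in\mathcal{A}^2_{BV}$ come along from Theorem \ref{thm5.1} applied at the fixed point; their norm bounds follow from Proposition \ref{prop3.3} with $f^i(\cdot)$ there taken to be $f^i(\cdot,\overline{Y}^i_\cdot)$ and a Gronwall argument to close the estimate (using the Lipschitz bound to absorb the $\E[\int_0^T |f^i(t,\overline{Y}^i_t)|^2 dt]$ term into $\E[\sup_t|\overline{Y}^i_t|^2]$ plus data). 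Finally, the Skorokhod condition and the constraint $l_t\le \frac1N\sum_i h(\overline{Y}^i_t)\le u_t$ for the fixed point are inherited directly from those satisfied by the output of Theorem \ref{thm5.1} at each iteration, hence at the limit. The main obstacle I anticipate is bookkeeping the cross term $\sum_i \delta Y^i_t\,d\delta K^{(N)}_t$ cleanly: one must verify that the Skorokhod conditions for the two solutions, combined with the sign of $\overline{\delta Y}_t$ on the respective supports of $d\overline{K}^{(N),\pm}$ and $d\overline{K}'^{(N),\pm}$, really do produce a term with the right sign (rather than merely a bounded one), since this is what lets the contraction constant be made small via $\beta$ rather than via $T$; everything else is a routine adaptation of the classical Lipschitz BSDE fixed-point scheme.
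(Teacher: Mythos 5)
Your overall architecture — freeze the driver at an input $\{V^i\}$, invoke Theorem \ref{thm5.1} to define a map $\Gamma$ on $(\mathcal{S}^2(\mathbb{F}^{(N)};\mathbb{R}))^N$, and produce the solution as its unique fixed point, with $\overline{Z}^i$ and $\overline{K}^{(N)}$ recovered from Theorem \ref{thm5.1} at the fixed point and uniqueness reduced to uniqueness of the fixed point plus uniqueness of the semimartingale decomposition — is exactly the paper's. Where you genuinely diverge is in how the contraction is obtained. The paper does not touch the cross term $\sum_i\delta Y^i\,d\delta K^{(N)}$ at all: it exploits the explicit decomposition $\overline{Y}^i=\overline{U}^i+\overline{S}$ from the proof of Theorem \ref{thm5.1} and the Dynkin-game representation of the doubly reflected component $\overline{S}$ (Corollary 5.5 of Cvitanic--Karatzas), which bounds $|\delta S_t|$ by conditional expectations of $\sup_s|\delta\psi^{(N)}_s|+\sup_s|\delta\phi^{(N)}_s|$; linearity of $h$ turns these into averages of $|\delta U^j|$, and Doob plus H\"older yield the contraction constant $12L^2T^2(1+8a^2)$, which forces a small-$T$ contraction followed by pasting over a partition of $[0,T]$. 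Your route — It\^o on $e^{\beta t}|\delta Y^i_t|^2$, sum over $i$, discard the reflection cross term by sign, absorb the driver by Young, choose $\beta$ large — is also valid: since $\overline{K}^{(N)}$ is common to all particles the cross term equals $N\int e^{\beta s}\overline{\delta Y}_s\,d\delta K^{(N)}_s$, and on the support of $d\overline{K}^{(N),+}$ one has $a\overline{\delta Y}_s=\frac1N\sum_ih(\overline{Y}^i_s)-\frac1N\sum_ih(\overline{Y}'^i_s)=l_s-\frac1N\sum_ih(\overline{Y}'^i_s)\le 0$, and symmetrically for the other three measures, so each of the four contributions is indeed nonpositive. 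What each approach buys: yours gives a one-shot, global-in-$T$ contraction with no interval pasting (and the argument you sketch for the sign is the part I checked most carefully — it does close, but note it uses the summation over $i$ and the affineness of $h$ in an essential way, since $\frac1N\sum_ih(y_i)\le\frac1N\sum_ih(y_i')$ does not imply $\frac1N\sum_iy_i\le\frac1N\sum_iy_i'$ for a general increasing $h$); the paper's representation-based route is reused verbatim in the a priori estimates of Proposition \ref{prop4.2'} and in the convergence proof of Theorem \ref{thm4.3}, which is presumably why the authors prefer it. Two cosmetic remarks: the "mechanism" you cite is closer in spirit to the comparison arguments for doubly reflected BSDEs than to the stopping-time uniqueness argument actually used in Theorem \ref{thm5.1}; and no limiting argument is needed for the constraint and Skorokhod condition at the fixed point — the fixed point is exactly a solution of the frozen system with input equal to itself, so these properties hold identically rather than "at the limit".
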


\begin{proof}
By Theorem \ref{thm5.1}, for a given $\overleftarrow{V}^i\in \mathcal{S}^2(\mathbb{F}^{(N)},\mathbb{R})$, the following IPS:
	\begin{displaymath}
		\begin{cases}
		\overleftarrow{Y}^i_t=\theta^i+\int_t^T f^i(s,\overleftarrow{V}^i_s) ds-\int_t^T \sum_{j=1}^N \overleftarrow{Z}^{i,j}_sd B^j_s+\overleftarrow{K}^{(N)}_T-\overleftarrow{K}^{(N)}_t, \quad 1\leq i\leq N,\vspace{0.2cm}\\
			l_t\leq \frac{1}{N}\sum_{i=1}^N h(\overleftarrow{Y}^i_t)\leq u_t, \quad t\in[0,T],\vspace{0.2cm}\\
			\overleftarrow{K}^{(N)}=\overleftarrow{K}^{(N),+}-\overleftarrow{K}^{(N),-} \quad \textrm{with}\quad \overleftarrow{K}^{(N),+},\overleftarrow{K}^{(N),-}\in \mathcal{A}^2(\mathbb{F}^{(N)},\mathbb{R}),\vspace{0.2cm}\\
			\int_0^T(\frac{1}{N}\sum_{i=1}^N h(\overleftarrow{Y}^i_t)-l_t)d\overleftarrow{K}^{(N),+}_t=\int_0^T (u_t-\frac{1}{N}\sum_{i=1}^N h(\overleftarrow{Y}^i_t))d\overleftarrow{K}^{(N),-}_t=0,
		\end{cases}
	\end{displaymath}
 admits a unique solution $(\{\overleftarrow{Y}^i,\overleftarrow{Z}^i\}_{1\leq i\leq N},\overleftarrow{K}^{(N)})\in (\mathcal{S}^2(\mathbb{F}^{(N)},\mathbb{R})\times\mathcal{H}^2(\mathbb{F}^{(N)};\,\mathbb{R}^N))^N\times\mathcal{A}^2_{BV}(\mathbb{F}^{(N)};\,\mathbb{R})$.
	We define the mapping $\Gamma:\mathcal{S}^2(\mathbb{F}^{(N)},\mathbb{R})\rightarrow\mathcal{S}^2(\mathbb{F}^{(N)},\mathbb{R})$ by
$\Gamma(\overleftarrow{V}^i)=\overleftarrow{Y}^i.
$
Similarly, by Theorem \ref{thm5.1}, 
for a given $\overrightarrow{V}^i\in \mathcal{S}^2(\mathbb{F}^{(N)},\mathbb{R})$, the following IPS:
	\begin{displaymath}
		\begin{cases}
		\overrightarrow{Y}^i_t=\theta^i+\int_t^T f^i(s,\overrightarrow{V}^i_s) ds-\int_t^T \sum_{j=1}^N \overrightarrow{Z}^{i,j}_sd B^j_s+\overrightarrow{K}^{(N)}_T-\overrightarrow{K}^{(N)}_t, \quad 1\leq i\leq N,\vspace{0.2cm}\\
			l_t\leq \frac{1}{N}\sum_{i=1}^N h(\overrightarrow{Y}^i_t)\leq u_t, \quad t\in[0,T],\vspace{0.2cm}\\
			\overrightarrow{K}^{(N)}=\overrightarrow{K}^{(N),+}-\overrightarrow{K}^{(N),-}\quad \textrm{with}\quad\overrightarrow{K}^{(N),+},\overrightarrow{K}^{(N),-}\in \mathcal{A}^2(\mathbb{F}^{(N)},\mathbb{R}),\vspace{0.2cm}\\
			\int_0^T(\frac{1}{N}\sum_{i=1}^N h(\overrightarrow{Y}^i_t)-l_t)d\overrightarrow{K}^{(N),+}_t=\int_0^T (u_t-\frac{1}{N}\sum_{i=1}^N h(\overrightarrow{Y}^i_t))d\overrightarrow{K}^{(N),-}_t=0,
		\end{cases}
	\end{displaymath}
 admits a unique solution $(\{\overrightarrow{Y}^i,\overrightarrow{Z}^i\}_{1\leq i\leq N},\overrightarrow{K}^{(N)})\in (\mathcal{S}^2(\mathbb{F}^{(N)},\mathbb{R})\times\mathcal{H}^2(\mathbb{F}^{(N)};\,\mathbb{R}^N))^N\times\mathcal{A}^2_{BV}(\mathbb{F}^{(N)};\,\mathbb{R})$.
 Then the mapping is $\Gamma(\overrightarrow{V}^i)=\overrightarrow{Y}^i$. 

  By the strategy used in the proof of Theorem \ref{thm5.1}, we construct $\overleftarrow{Y}^i_t$ and $\overrightarrow{Y}^i_t$ for any $1\leq i\leq N$ and $t\in[0,T]$, as follows:
	\begin{align*}
		\overleftarrow{Y}^i_t=\overleftarrow{U}^i_t+\overleftarrow{S}_t \quad\text{and}\quad \overrightarrow{Y}^i_t=\overrightarrow{U}^i_t+\overrightarrow{S}_t,
	\end{align*}
where
	\begin{align*}
		\overleftarrow{U}^i_t:=\E\bigg[\xi^i+\int_t^T f^i(s,\overleftarrow{V}_s^i)ds\,\bigg|\,\mathcal{F}^{(N)}_t\bigg]\quad\text{and}\quad \overrightarrow{U}^i_t:=\E\bigg[\xi^i+\int_t^T f^i(s,\overrightarrow{V}_s^i)ds\,\bigg|\,\mathcal{F}^{(N)}_t\bigg],
	\end{align*}
and $\overleftarrow{S}$ (resp. $\overrightarrow{S}$) is the first component of the solution to the doubly RBSDE \eqref{DRBSDE} with terminal value $\Phi^{(N)}_T+\Psi^{(N)}_T$, no driver,  lower obstacle $\overleftarrow{\psi}^{(N)}$ (resp. $\overrightarrow{\psi}^{(N)}$) and upper obstacle $\overleftarrow{\phi}^{(N)}$ (resp. $\overrightarrow{\phi}^{(N)}$).
Here, $\overleftarrow{\psi}^{(N)}_t$, $\overleftarrow{\phi}^{(N)}_t$, $\overrightarrow{\psi}^{(N)}_t$ and $\overrightarrow{\phi}^{(N)}_t$ are $\mathcal{F}^{(N)}_t$-measurable random variables such that
	\begin{align*}
		&\frac{1}{N}\sum_{i=1}^N h(\overleftarrow{U}^i_t+\overleftarrow{\psi}^{(N)}_t)=l_t, \qquad\quad \frac{1}{N}\sum_{i=1}^N h(\overleftarrow{U}^i_t+\overleftarrow{\phi}^{(N)}_t)=u_t,\\
		&\frac{1}{N}\sum_{i=1}^N h(\overrightarrow{U}^i_t+\overrightarrow{\psi}^{(N)}_t)=l_t, \qquad \quad\frac{1}{N}\sum_{i=1}^N h(\overrightarrow{U}^i_t+\overrightarrow{\phi}^{(N)}_t)=u_t.
	\end{align*}
 
It follows from the construction of $\overleftarrow{Y}^i$, $\overrightarrow{Y}^i$ and the representation for $\overleftarrow{S}$, $\overrightarrow{S}$ (see Corollary 5.5 in \cite{cvitanic1996backward}) that
	\begin{align*}
		|\overleftarrow{Y}^i_t-\overrightarrow{Y}^i_t|\leq &|\overleftarrow{U}^i_t-\overrightarrow{U}^i_t|+|\overleftarrow{S}_t-\overrightarrow{S}_t|\\
		\leq & LT\E\Bigg[\sup_{s\in[0,T]}|\overleftarrow{V}^i_s-\overrightarrow{V}^i_s|\,\Bigg|\,\mathcal{F}^{(N)}_t\Bigg]+\E\Bigg[\sup_{s\in[0,T]}|\overleftarrow{\psi}^{(N)}_s-\overrightarrow{\psi}^{(N)}_s|\,\Bigg|\,\mathcal{F}^{(N)}_t\Bigg]\\
  &+\E\Bigg[\sup_{s\in[0,T]}|\overleftarrow{\phi}^{(N)}_s-\overrightarrow{\phi}^{(N)}_s|\,\Bigg|\,\mathcal{F}^{(N)}_t\Bigg].
	\end{align*}
	Applying Doob's inequality, H\"{o}lder's inequality and equation \eqref{phinpsin} yields
	\begin{align*}
		&\hspace{-1cm}\E\Bigg[\sup_{t\in[0,T]}|\overleftarrow{Y}^i_t-\overrightarrow{Y}^i_t|^2\Bigg]\\
  \leq& 12L^2T^2\E\Bigg[\sup_{t\in[0,T]}|\overleftarrow{V}^i_t-\overrightarrow{V}^i_t|^2\Bigg]+12\E\Bigg[\sup_{t\in[0,T]}|\overleftarrow{\psi}^{(N)}_t-\overrightarrow{\psi}^{(N)}_t|^2\Bigg]\\
  &+12\E\Bigg[\sup_{t\in[0,T]}|\overleftarrow{\phi}^{(N)}_t-\overrightarrow{\phi}^{(N)}_t|^2\Bigg]\\
		\leq &12L^2T^2\E\Bigg[\sup_{t\in[0,T]}|\overleftarrow{V}^i_t-\overrightarrow{V}^i_t|^2\Bigg]+24a^2\E\Bigg[\sup_{t\in[0,T]}\Bigg(\frac{1}{N}\sum_{j=1}^N|\overleftarrow{U}^j_t-\overrightarrow{U}^j_t|\Bigg)^2\Bigg]\\
		\leq& 12L^2T^2\E\Bigg[\sup_{t\in[0,T]}|\overleftarrow{V}^i_t-\overrightarrow{V}^i_t|^2\Bigg]+96a^2L^2T^2\E\Bigg[\frac{1}{N}\sum_{i=1}^N\sup_{t\in[0,T]}|\overleftarrow{V}^i_t-\overrightarrow{V}^i_t|^2\Bigg],
	\end{align*}
	where $a$ is from Condition (A2). Summing these inequalities yields that
	\begin{align*}
		\E\Bigg[\frac{1}{N}\sum_{i=1}^N\sup_{t\in[0,T]}|\overleftarrow{Y}^i_t-\overrightarrow{Y}^i_t|^2\Bigg]\leq 12L^2 T^2(1+8a^2)\E\Bigg[\frac{1}{N}\sum_{i=1}^N\sup_{t\in[0,T]}|\overleftarrow{V}^i_t-\overrightarrow{V}^i_t|^2\Bigg].
	\end{align*}
	Choosing $T\leq \varepsilon$ with $\varepsilon>0$ being such that
$12L^2 \varepsilon^2(1+8a^2)\leq 1/2,
$
	then $\Gamma$ is a contraction mapping. Thus, $\Gamma$ has a unique fixed point in $\mathcal{S}^2(\mathbb{F}^{(N)},\mathbb{R})$ when $T\leq \varepsilon$, i.e., there exists a unique solution $\{\overline{Y}^i\}_{1\leq i\leq N}$ solving IPS \eqref{eq7} for some $(\{\overline{Z}^i\}_{1\leq i\leq N},\overline{K}^{(N)})\in (\mathcal{H}^2(\mathbb{F}^{(N)};\,\mathbb{R}^N))^N\times\mathcal{A}_{BV}(\mathbb{F}^{(N)},\mathbb{R})$ on $[0,T]$ when $T$ is small. Since $\{\overline{Y}^i\}_{1\leq i\leq N}$ is unique, applying It\^{o}'s formula, $\{\overline{Z}^i\}_{1\leq i\leq N}$ is unique and consequently $\overline{K}^{(N)}$ is also unique.
	
	For the general $T$, let $n$ be large enough such that $T/n\leq \varepsilon$. For any $k=0,1,\cdots,n$, set $t_k=\frac{kT}{n}$. Let $(\{\overline{Y}^{i,k},\overline{Z}^{i,k}\}_{1\leq i\leq N},\overline{K}^{(N),k})$ be the unique solution on $[t_{k-1},t_k]$ with terminal value $\overline{Y}^{i,k+1}_{t_k}$, driver $f^i$ and obstacles $l,u$, for $k=n,n-1,\ldots,1$, where $\overline{Y}^{i,n+1}_{t_n}=\theta^i$. For any $t\in[t_{k-1},t_k]$ where $k=1,2,\ldots,n$, set 
	\begin{align*}
		\overline{Y}^i_t=\overline{Y}^{i,k}_t,\quad \overline{Z}^i_t=\overline{Z}^{i,k}_t, \quad\text{and}\quad \overline{K}^{(N)}_t=\overline{K}^{(N),k}_t+\sum_{l<k} \overline{K}^{(N),l}_{t_l}.
	\end{align*}
Here, we use the following notation $\sum_{l<1}\overline{K}^{(N),l}_{t_l}=0$.	Then $(\{\overline{Y}^i,\overline{Z}^i\}_{1\leq i\leq N},\overline{K}^{(N)})$ is the solution to the  IPS \eqref{eq7}. The uniqueness follows from the uniqueness on each small time interval. 
\end{proof} 

\begin{proposition}\label{prop4.2'}
	Under Conditions (A1) and (A2), there exists a constant $C$ independent of $N$ such that, for any $1\leq i\leq N$,
	\begin{align*}
		\E\left[\sup_{t\in[0,T]}|\overline{Y}^i_t|^2\right]+\E\left[\int_0^T |\overline{Z}^i_t|^2dt\right]&+\E|\overline{K}^{(N),+}_T|^2+\E|\overline{K}^{(N),-}_T|^2\\
		&\leq C\Bigg(1+\E|\xi|^2+\E\left[\int_0^T|f(t,0)|^2dt\right]\Bigg),
	\end{align*}
 where $(\{\overline{Y}^i,\overline{Z}^i\}_{1\leq i\leq N},\overline{K}^{(N)})$ is the solution to the  IPS \eqref{eq7}.
\end{proposition}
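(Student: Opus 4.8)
The plan is to follow the scheme of Proposition~\ref{prop3.3}, with the $y$-dependence of the driver absorbed by a Gronwall argument and every constant kept independent of $N$ by exploiting the exchangeability of the particles. Recall from the proof of Theorem~\ref{thm4.1} that the solution decomposes as $\overline{Y}^i_t=\overline{U}^i_t+\overline{S}_t$, where $\overline{U}^i_t=\E[\xi^i+\int_t^T f^i(s,\overline{Y}^i_s)\,ds\,|\,\mathcal{F}^{(N)}_t]$ and $\overline{S}$ is the first component of the doubly reflected BSDE \eqref{DRBSDE} with terminal value $\Psi^{(N)}_T+\Phi^{(N)}_T$, no driver, and obstacles $\overline{\psi}^{(N)}\le\overline{S}\le\overline{\phi}^{(N)}$ that by (A2) are given by the explicit formulas \eqref{phinpsin} with $f^i(s)$ replaced by $f^i(s,\overline{Y}^i_s)$. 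Since the data $(\xi^i,f^i,B^i)$ are i.i.d.\ and the IPS \eqref{eq7} is symmetric under permutations of the particles, $\{\overline{Y}^i\}_{1\le i\le N}$ is exchangeable; hence $\phi(t):=\E[\sup_{u\in[t,T]}|\overline{Y}^i_u|^2]$ does not depend on $i$, and it is finite by Theorem~\ref{thm4.1}.

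\textbf{Estimate for $\overline{Y}^i$.} Bounding $\overline{S}_u$ between its obstacles, using \eqref{phinpsin} and the Lipschitz bound $|f^j(r,\overline{Y}^j_r)|\le|f^j(r,0)|+L\sup_{v\in[r,T]}|\overline{Y}^j_v|$, one obtains, for $t\le u\le T$, the pointwise bound $|\overline{Y}^i_u|\le C+\E[\mathcal{X}_t\,|\,\mathcal{F}^{(N)}_u]$ with the $\mathcal{F}^{(N)}_T$-measurable, square-integrable random variable
\[
\mathcal{X}_t:=|\xi^i|+\int_t^T\!\big(|f^i(r,0)|+L\sup_{v\in[r,T]}|\overline{Y}^i_v|\big)dr+\frac{C}{N}\sum_{j=1}^N\Big(|\xi^j|+\int_t^T\!\big(|f^j(r,0)|+L\sup_{v\in[r,T]}|\overline{Y}^j_v|\big)dr\Big).
\]
The point is that $\mathcal{X}_u\le\mathcal{X}_t$ for $u\ge t$ (nonnegative integrands), so the same $\mathcal{X}_t$ serves for all $u\in[t,T]$; taking $\sup_{u\in[t,T]}$ and applying Doob's inequality gives $\phi(t)\le C+C\,\E[\mathcal{X}_t^2]$. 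Expanding $\E[\mathcal{X}_t^2]$ via Cauchy--Schwarz — both $(\int_t^T\cdot\,dr)^2\le(T-t)\int_t^T(\cdot)^2dr$ and $(\frac1N\sum_j\cdot)^2\le\frac1N\sum_j(\cdot)^2$ — and using that the $(\xi^j,f^j)$ are i.i.d.\ and the $\overline{Y}^j$ exchangeable (so the $j$-averages collapse to single-particle quantities), one reaches
\[
\phi(t)\le C\Big(1+\E|\xi|^2+\E\big[\textstyle\int_0^T|f(r,0)|^2dr\big]\Big)+C\int_t^T\phi(r)\,dr,\qquad t\in[0,T].
\]
Since $\phi$ is bounded, the (backward) Gronwall lemma yields $\phi(t)\le C(1+\E|\xi|^2+\E[\int_0^T|f(r,0)|^2dr])$ with $C$ depending only on $T,L,a,b$ and the obstacle bounds; taking $t=0$ gives the desired estimate for $\E[\sup_{t\in[0,T]}|\overline{Y}^i_t|^2]$.

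\textbf{Estimates for $\overline{K}^{(N),\pm}$ and $\overline{Z}^i$.} With the $\overline{Y}^i$-bound available, the bounds on $\overline{K}^{(N),+}_T$, $\overline{K}^{(N),-}_T$ follow by repeating the argument leading to \eqref{k+k-}: apply \eqref{AT} to $\overline{K}^{(N),\pm}_T$, dominate $\E[\overline{K}^{(N),\pm}_T|\mathcal{F}^{(N)}_t]-\overline{K}^{(N),\pm}_t$ by the supermartingales $H^l,H^u$ as in \eqref{barKNT}, and estimate $\E[\sup_t(|H^l_t|^2+|H^u_t|^2)]$ through \eqref{hlhu} with $f^j(s)$ replaced by $f^j(s,\overline{Y}^j_s)$, using $|f^j(s,\overline{Y}^j_s)|\le|f^j(s,0)|+L\sup_t|\overline{Y}^j_t|$, the $\overline{Y}$-estimate, exchangeability, and \eqref{PsinPhin}. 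Finally, Itô's formula applied to $|\overline{Y}^i_t|^2$ gives, exactly as in Proposition~\ref{prop3.3},
\[
\E\Big[\int_0^T|\overline{Z}^i_s|^2ds\Big]\le\E|\theta^i|^2+2\,\E\Big[\int_0^T|\overline{Y}^i_s|\big(|f^i(s,0)|+L|\overline{Y}^i_s|\big)ds\Big]+2\,\E\Big[\sup_{s}|\overline{Y}^i_s|\big(|\overline{K}^{(N),+}_T|+|\overline{K}^{(N),-}_T|\big)\Big],
\]
and Young's and H\"{o}lder's inequalities, together with $\E|\theta^i|^2\le C(1+\E|\xi|^2)$ (by \eqref{PsinPhin}) and the estimates already obtained, close the argument.

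\textbf{Main obstacle.} The delicate point is the $\overline{Y}^i$-estimate: one must arrange that only conditional expectations of $[t,T]$-integrals enter, turning the useless algebraic inequality $\phi(T)\le A+CT\phi(T)$ into the genuine backward Gronwall inequality $\phi(t)\le A+C\int_t^T\phi(r)\,dr$ — which is why the finiteness of $\phi$ supplied by Theorem~\ref{thm4.1} is needed for the argument to be legitimate for arbitrary $T$ — and, simultaneously, that the mean-field coupling terms $\frac1N\sum_j$ are controlled \emph{uniformly in} $N$, which rests on exchangeability of the particles and on the inequality $(\frac1N\sum_j x_j)^2\le\frac1N\sum_j x_j^2$.
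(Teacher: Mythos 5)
Your proof is correct and follows essentially the same route as the paper: the decomposition $\overline{Y}^i=\overline{U}^i+\overline{S}$ with the obstacles made explicit by (A2), a Gr\"onwall closure for the $Y$-estimate, the $H^l,H^u$/\eqref{AT} machinery for $\overline{K}^{(N),\pm}$, and It\^o's formula for $\overline{Z}^i$. The only (harmless) deviation is in the Gr\"onwall bookkeeping: you run a single backward Gr\"onwall on the running supremum $\E[\sup_{[t,T]}|\overline{Y}^i|^2]$, made $i$-independent via exchangeability of the particles, whereas the paper avoids invoking exchangeability of the solution by a two-step Gr\"onwall on the pointwise moments (first the empirical average over $i$, then the individual particle), upgrading to the supremum only at the end.
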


\begin{proof}
	We first prove the estimate for $\overline{Y}^i$. Set 
	\begin{align}
		\label{eqn:barbarU}
		\overline{U}^i_t:=\E\Bigg[\xi^i+\int_t^T f^i(s,\overline{Y}^{i}_s)ds\,\Bigg|\,\mathcal{F}^{(N)}_t\Bigg].
	\end{align}
	Let $\overline{\psi}^{(N)}_t$ and $\overline{\phi}^{(N)}_t$ be $\mathcal{F}^{(N)}_t$-measurable random variables satisfying respectively
	\begin{align}
      \label{eqn:barbarpsi_phi}
		\frac{1}{N}\sum_{i=1}^N h(\overline{U}^i_t+\overline{\psi}^{(N)}_t)=l_t\quad\text{and}\quad \frac{1}{N}\sum_{i=1}^N h(\overline{U}^i_t+ \overline{\phi}^{(N)}_t)=u_t.
	\end{align}
	Since $h$ is linear, it is easy to check that there exists a constant $C$ independent of $N$, such that 
	\begin{equation}\label{barpsin}\begin{split}
			&\E \left[\sup_{s\in[t,T]}|\overline{\psi}^{(N)}_s|^2\right] \leq C\Bigg(1+\E\Bigg[|\xi|^2+\int_0^T |f(s,0)|^2ds\Bigg]+\E\Bigg[\int_t^T\frac{1}{N}\sum_{j=1}^N|Y^j_s|^2ds\Bigg]\Bigg),\\
	&\E \left[\sup_{s\in[t,T]}|\overline{\phi}^{(N)}_s|^2\right] \leq C\Bigg(1+\E\Bigg[|\xi|^2+\int_0^T |f(s,0)|^2ds\Bigg]+\E\Bigg[\int_t^T\frac{1}{N}\sum_{j=1}^N|Y^j_s|^2ds\Bigg]\Bigg).
	\end{split}\end{equation}

We use the following construction as the one in the proof of Theorem \ref{thm5.1}:
	\begin{align*}		\overline{Y}^i_t&=\overline{U}^i_t+\overline{S}_t\\
		&=\overline{U}^i_t+\essinf_{\sigma\in\mathcal{T}^{(N)}_{t,T}}\esssup_{\tau\in\mathcal{T}^{(N)}_{t,T}}\E\Big[(\Psi^{(N)}_T+\Phi^{(N)}_T)\mathbbm{1}_{\{\sigma\wedge \tau=T\}}+\overline{\psi}^{(N)}_\tau \mathbbm{1}_{\{\tau<T,\tau\leq \sigma\}}\\
		&\hspace{7.5cm}+\overline{\phi}^{(N)}_\sigma \mathbbm{1}_{\{\sigma<\tau\}})\,\Big|\,\mathcal{F}^{(N)}_t\Big],
	\end{align*}
	where $\overline{S}$  is the first component of the solution to the following RBSDE:
	\begin{equation}\label{DRBSDE_overline}
		\begin{cases}
			\overline{S}_t=\Psi^{(N)}_T+\Phi^{(N)}_T-\int_t^T \sum_{j=1}^N\overline{Z}^{(N),j}_s dB^j_s+(\overline{K}^{(N)}_T-\overline{K}^{(N)}_t),\vspace{0.2cm}\\
			\overline{\psi}^{(N)}_t\leq \overline{S}_t\leq \overline{\phi}^{(N)}_t, \quad t\in[0,T],\vspace{0.2cm}\\
			\overline{K}^{(N)}=\overline{K}^{(N),+}-\overline{K}^{(N),-} \quad\textrm{ with }\quad \overline{K}^{(N),+},\overline{K}^{(N),-}\in \mathcal{A}^2(\mathbb{F}^{(N)},\mathbb{R}),\vspace{0.2cm}\\
			\int_0^T(\overline{S}_t-\overline{\psi}^{(N)}_t)d\overline{K}^{(N),+}_t=\int_0^T(\overline{\phi}^{(N)}_t-\overline{S}_t)d\overline{K}^{(N),-}_t=0.
		\end{cases}
	\end{equation}
	Noting that $\overline{\psi}^{(N)}_T\leq \Psi^{(N)}_T+\Phi^{(N)}_T\leq \overline{\phi}^{(N)}_T$, it follows that, for any $t\in[0,T]$, 
	\begin{align}\label{yit}
		|\overline{Y}^i_t|\leq &\E\Bigg[|\xi^i|+\int_t^T |f^i(s,0)|ds+L\int_t^T|\overline{Y}^i_s|ds\,\Bigg|\,\mathcal{F}^{(N)}_t\Bigg]\\
		&+\E\Bigg[\sup_{s\in[t,T]}|\overline{\psi}^{(N)}_s|+\sup_{s\in[t,T]}|\overline{\phi}^{(N)}_s|\,\Bigg|\,\mathcal{F}^{(N)}_t\Bigg].\nonumber
	\end{align}
	Applying Doob's inequality and H\"{o}lder's inequality, by equations \eqref{barpsin} and \eqref{yit}, we have
	\begin{align}\label{yit'}
		\E|\overline{Y}^i_t|^2\leq  C\Bigg(1+\E\Bigg[|\xi|^2+\int_0^T |f(s,0)|^2ds\Bigg]+\E\Bigg[\int_t^T\frac{1}{N}\sum_{j=1}^N|\overline{Y}^j_s|^2ds\Bigg]+\E\Bigg[\int_t^T |\overline{Y}^i_s|^2ds\Bigg]\Bigg).
	\end{align}
	Summing over $i$, we obtain that
	\begin{align*}
		\E\left[\frac{1}{N}\sum_{i=1}^N|\overline{Y}^i_t|^2\right]\leq C\Bigg(1+\E\Bigg[|\xi|^2+\int_0^T |f(s,0)|^2ds\Bigg]+\int_t^T\E\Bigg[\frac{1}{N}\sum_{j=1}^N|\overline{Y}^j_s|^2\Bigg]ds\Bigg).
	\end{align*}
	It follows from Gr\"onwall's inequality that 
	\begin{align*}
		\E\left[\frac{1}{N}\sum_{i=1}^N|\overline{Y}^i_t|^2\right]\leq C\Bigg(1+\E\Bigg[|\xi|^2+\int_0^T |f(s,0)|^2ds\Bigg]\Bigg).
	\end{align*}
	Plugging the above inequality into equations \eqref{barpsin} and \eqref{yit'},  using Gr\"onwall's inequality again, we have \begin{equation}\label{barpsinyit}\begin{split}
			\E|\overline{Y}^i_t|^2&\leq C\Bigg(1+\E\Bigg[|\xi|^2+\int_0^T |f(s,0)|^2ds\Bigg]\Bigg),\\			\E\left[\sup_{s\in[0,T]}|\overline{\phi}^{(N)}_s|^2\right]&\leq C\Bigg(1+\E\Bigg[|\xi|^2+\int_0^T |f(s,0)|^2ds\Bigg]\Bigg),\\			\E\left[\sup_{s\in[0,T]}|\overline{\psi}^{(N)}_s|^2\right]&\leq C\Bigg(1+\E\Bigg[|\xi|^2+\int_0^T |f(s,0)|^2ds\Bigg]\Bigg).
	\end{split}\end{equation}
	Finally, combining equations \eqref{yit} and \eqref{barpsinyit}, applying Doob's inequality and H\"{o}lder's inequality, we obtain that
	\begin{equation}\label{estimateyi}
		\E\left[\sup_{t\in[0,T]}|\overline{Y}^i_t|^2\right]
		\leq C\Bigg(1+\E|\xi|^2+\E\left[\int_0^T|f(t,0)|^2dt\right]\Bigg).
	\end{equation}
	
	It remains to prove that  
	\begin{align*}
		\E\left[\int_0^T |\overline{Z}^i_t|^2dt\right]+\E|\overline{K}^{(N),+}_T|^2+\E|\overline{K}^{(N),-}_T|^2
		\leq C\Bigg(1+\E|\xi|^2+\E\left[\int_0^T|f(t,0)|^2dt\right]\Bigg).
	\end{align*}
Similar to the analysis involved in obtaining equation \eqref{k+k-}, by the estimate for $\overline{Y}^i$ in equation \eqref{estimateyi}, there exists a constant $C$ independent of $N$ such that
	\begin{align}\label{estimateKN}
		\E|\overline{K}^{(N),+}_T|^2+\E|\overline{K}^{(N),-}_T|^2
		\leq C\Bigg(1+\E|\xi|^2+\E\left[\int_0^T|f(t,0)|^2dt\right]\Bigg).
	\end{align}
	Applying It\^{o}'s formula to $|\overline{Y}^i_t|^2$, we obtain that 
	\begin{align*}
		|\overline{Y}^i_t|^2+\int_t^T\sum_{j=1}^N |\overline{Z}^{i,j}_s|^2 ds=&|\theta^i|^2+2\int_t^T \overline{Y}^i_s f^i(s,\overline{Y}^i_s)ds+2\int_t^T \overline{Y}^i_sd\overline{K}^{(N)}_s\\
  &-2\int_t^T \overline{Y}^i_s\sum_{j=1}^N \overline{Z}^{i,j}_sdB^j_s.
	\end{align*}
	Letting $t=0$ and taking expectations on both sides yield that 
	\begin{align*}
		\E\left[\int_0^T |\overline{Z}^i_t|^2dt\right]\leq&\E\Bigg[|\theta^i|^2+2\int_0^T \overline{Y}^i_s f^i(s,\overline{Y}^i_s)ds+2\int_0^T \overline{Y}^i_sd\overline{K}^{(N)}_s\Bigg] \\
		\leq &C\Bigg\{\E|\xi^i|^2+\E|\Psi^{(N)}_T|^2+\E|\Phi^{(N)}_T|^2+\E\left[\int_0^T|\overline{Y}^i_t|^2dt\right] \\	
		&\quad+\E\left[\int_0^T|\overline{Y}^i_t ||f^i(t,0)|dt\right]+\E\left[\sup_{t\in[0,T]}|\overline{Y}^i_t|\Big(|\overline{K}^{(N),+}_T|+|\overline{K}^{(N),-}_T|\Big)\right]\Bigg\}\\
		\leq &C\Bigg\{\E|\xi^i|^2+\E|\Psi^{(N)}_T|^2+\E|\Phi^{(N)}_T|^2+\E\left[\sup_{t\in[0,T]}|\overline{Y}^i_t|^2\right]\\
		&\quad+\E\left[\int_0^T|f^i(t,0)|^2dt\right]+\E|\overline{K}^{(N),+}_T|^2+\E|\overline{K}^{(N),-}_T|^2\Bigg\}.
	\end{align*}
	Applying equations \eqref{PsinPhin}, \eqref{estimateyi}, and \eqref{estimateKN}, we obtain the desired result.
\end{proof}

\subsection{Approximation}
\label{sec:linear_Approximation}

In this subsection, we use the IPS \eqref{eq7} to approximate the doubly MRBSDE \eqref{nonlineary}.
For this purpose, consider $\xi$, $f$, $\{\xi^i\}_{1\leq i\leq N}$, $\{f^i\}_{1\leq i\leq N}$ as given in equations \eqref{remarkxi0} and \eqref{remarkxi}. Let $(  Y^i,  Z^i,K)$ be the solution to the following doubly MRBSDE:
\begin{equation}\label{nonlineary_approx}
	\begin{cases}
		Y^i_t=\xi^i+\int_t^T f^i(s,  Y^i_s)ds-\int_t^T   Z^i_s dB^i_s+K_T-K_t, \quad \E[h(Y^i_t)]\in [l_t,u_t],\quad  t\in[0,T], \vspace{0.2cm}\\
			K=K^+ -K^- \quad\textrm{with}\quad  K^+,K^-\in \mathcal{I}([0,T];\,\mathbb{R}),\vspace{0.2cm}\\
		\int_0^T (\E[h(  Y^i_t)]-l_t)dK_t^+=\int_0^T (u_t-\E[h(  Y^i_t)])dK^-_t=0.
	\end{cases}
\end{equation}
Then $(  Y^i,  Z^i,K)$ for $1\leq i\leq N$ are independent copies of $(Y,Z,K)$, which is the unique solution to the doubly MRBSDE \eqref{nonlineary}.

We define
\begin{align*}	  U^i_t:=\E\Bigg[\xi^i+\int_t^Tf^i(s,  Y^i_s)ds \,\Bigg|\,\mathcal{F}^i_t\Bigg].
\end{align*}
Since the Brownian motions $\{B^i\}_{1\leq i\leq N}$ are independent, it is easy to check that  
\begin{align*}
	  U^i_t=\E\Bigg[\xi^i+\int_t^Tf^i(s,  Y^i_s)ds \,\Bigg|\,\mathcal{F}^{(N)}_t\Bigg].
\end{align*}
Define 
\begin{align*}
	\Psi_T:=\inf\Big\{x\geq 0;\,\E[h(\xi^i+x)]\geq l_T\Big\}\quad\text{and}\quad
	 \Phi_T:=\sup\Big\{x\leq 0;\,\E[h(\xi^i+x)]\leq u_T\Big\}.
\end{align*}
Let $\{  \psi_t\}_{t\in[0,T]}$ and $\{  \phi_t\}_{t\in[0,T]}$ be two deterministic functions satisfying respectively
\begin{align*}
\E[h(  U^i_t+  \psi_t)]=l_t\quad\text{and}\quad \E[h(  U^i_t+  \phi_t)]=u_t.
\end{align*}
It is clear that $$  \psi_T\leq \Psi_T=0=\Phi_T\leq   \phi_T.$$ Thus, we may write
\begin{align}\label{psi+phi-}
	\Psi_T=  (\psi_T)^+\quad\text{and}\quad \Phi_T=-  (\phi_T)^-.
\end{align}
Next, by Theorem 3.6 in \cite{falkowski2022backward}, we have
\begin{align*}
	K_T-K_t&=\E[  Y^i_t]-\E[  U^i_t]=\inf_{s\in[t,T]}\sup_{q\in[t,T]}\E\Big[  \psi_q \mathbbm{1}_{\{q<T,q\leq s\}}+  \phi_s \mathbbm{1}_{\{s<q\}}\Big]\\
	&=\essinf_{\sigma\in \mathcal{T}^{(N)}_{t,T}}\esssup_{\tau\in\mathcal{T}^{(N)}_{t,T}}\E\Big[(\Psi_\sigma+\Phi_\tau)\mathbbm{1}_{\{\sigma\wedge \tau=T\}}+  \psi_{\sigma} \mathbbm{1}_{\{\sigma<T,\sigma\leq \tau\}}+  \phi_\tau \mathbbm{1}_{\{\tau<\sigma\}}\,\Big|\,\mathcal{F}^{(N)}_t\Big].
\end{align*}
It follows that 
\begin{align}\label{baryi}
	  Y^i_t=  U^i_t+\essinf_{\sigma\in \mathcal{T}^{(N)}_{t,T}}\esssup_{\tau\in\mathcal{T}^{(N)}_{t,T}}\E\Big[(\Psi_\sigma+\Phi_\tau)\mathbbm{1}_{\{\sigma\wedge \tau=T\}}+  \psi_{\sigma} \mathbbm{1}_{\{\sigma<T,\sigma\leq \tau\}}+  \phi_\tau \mathbbm{1}_{\{\tau<\sigma\}}\,\Big|\,\mathcal{F}^{(N)}_t\Big].
\end{align}

\begin{theorem}\label{thm4.3}
 Consider $(\{\overline{Y}^i,\overline{Z}^i\}_{1\leq i\leq N},K^{(N)})$ as the solution to the IPS \eqref{eq7}  and $(\{  Y^i,   Z^i\}_{1\leq i\leq N}, K)$ as the solution to the doubly MRBSDE \eqref{nonlineary_approx}.  
     Under Conditions (A1) and (A2), if  $\sup_{t\in[0,T]}\E|  Z^1_t|^4<\infty$, there exists a constant $C$ independent of $N$, such that
		\begin{align*}
			\E\Bigg[\sup_{t\in[0,T]}|\overline{Y}^i_t-  Y^i_t|^2\Bigg]\leq CN^{-1}, \quad \quad\E\Bigg[\sup_{t\in[0,T]}|K^{(N)}_t-K_t|^2\Bigg]\leq CN^{-1/2} \\
   \text{and}\quad\quad \E\Bigg[\int_0^T|\overline{Z}^i_t-  Z^i_t e_i|^2dt\Bigg]\leq CN^{-1/2},
		\end{align*}
where $(e_1,\cdots,e_N)$ is the canonical basis in $\mathbb{R}^N$ and thus
$$\overline{Z}^i_t-  Z^i_t e_i=\Big(\overline{Z}^{i,1}_t,\cdots, \overline{Z}^{i,i-1}_t,\overline{Z}^{i,i}_t-  Z^{i}_t,\overline{Z}^{i,i+1}_t,\cdots, \overline{Z}^{i,N}_t\Big).$$

\end{theorem}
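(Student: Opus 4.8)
The plan is to compare the two solutions through the explicit decompositions already available: for the particle system, $\overline{Y}^i_t=\overline{U}^i_t+\overline{S}_t$ with $\overline{U}^i$ as in \eqref{eqn:barbarU} and $\overline{S}$ the doubly reflected component of \eqref{DRBSDE_overline}; for the limit, $Y^i_t=U^i_t+(K_T-K_t)$, which is precisely \eqref{baryi}. First I would write $\overline{Y}^i_t-Y^i_t=(\overline{U}^i_t-U^i_t)+(\overline{S}_t-(K_T-K_t))$ and treat the two pieces separately. The Lipschitz condition \eqref{eqn:Lipschitz} bounds the first particlewise, $|\overline{U}^i_t-U^i_t|\le L\,\E[\int_t^T|\overline{Y}^i_s-Y^i_s|\,ds\,|\,\mathcal{F}^{(N)}_t]$. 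For the reflected parts I would use that both $\overline{S}_t$ (via the barred analogue of \eqref{representationofS}) and $K_T-K_t$ (via \eqref{baryi}) are values of Dynkin games on $[t,T]$ with lower/upper obstacles $(\overline{\psi}^{(N)},\overline{\phi}^{(N)})$ resp. $(\psi,\phi)$ and terminal payoffs $\Psi^{(N)}_T+\Phi^{(N)}_T$ resp. $\Psi_T+\Phi_T=0$; the standard stability of such values --- the $1$-Lipschitz dependence of $\essinf\esssup$ on the payoffs together with that of $\E[\,\cdot\,|\,\mathcal{F}^{(N)}_t]$ --- gives
\[
|\overline{S}_t-(K_T-K_t)|\le\E\Big[\sup_{s\in[0,T]}\big(|\overline{\psi}^{(N)}_s-\psi_s|+|\overline{\phi}^{(N)}_s-\phi_s|\big)+|\Psi^{(N)}_T-\Psi_T|+|\Phi^{(N)}_T-\Phi_T|\,\Big|\,\mathcal{F}^{(N)}_t\Big].
\]

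Next I would estimate the obstacle fluctuations. Under (A2), the linear identities behind \eqref{phinpsin} and \eqref{eqn:barbarpsi_phi} give $\overline{\psi}^{(N)}_t-\psi_t=\frac1N\sum_{j=1}^N(\E[U^j_t]-\overline{U}^j_t)$, and analogously for $\overline{\phi}^{(N)}-\phi$; I would split this as $\frac1N\sum_j(\E[U^j_t]-U^j_t)+\frac1N\sum_j(U^j_t-\overline{U}^j_t)$. The second sum is Lipschitz-dominated by $L\,\E[\int_t^T\frac1N\sum_j|Y^j_s-\overline{Y}^j_s|\,ds\,|\,\mathcal{F}^{(N)}_t]$. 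The first sum is a law-of-large-numbers term: since $K$ is deterministic, each $Y^j$, hence $U^j$, is a functional of $B^j$ alone, so the $U^j$ are i.i.d.; writing $U^j_t-\E[U^j_t]=\int_0^tZ^j_s\,dB^j_s-\int_0^t\big(f^j(s,Y^j_s)-\E[f^j(s,Y^j_s)]\big)\,ds$, I would apply Doob's inequality to the mutually orthogonal martingale parts, Cauchy--Schwarz in time to the finite-variation parts, and independence across $j$ to get $\E[\sup_{t\le T}|\frac1N\sum_j(\E[U^j_t]-U^j_t)|^2]\le C/N$. This, together with $\E|\Psi^{(N)}_T-\Psi_T|^2+\E|\Phi^{(N)}_T-\Phi_T|^2\le C/N$, is exactly the content of Lemma~\ref{convergepsi}, and it is here that the assumption $\sup_{t\in[0,T]}\E|Z^1_t|^4<\infty$ is used.

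With $g(t):=\E[\frac1N\sum_{i=1}^N|\overline{Y}^i_t-Y^i_t|^2]$, collecting the bounds above, squaring, using Doob and Cauchy--Schwarz and averaging over $i$, I expect an inequality of the form $g(t)\le C\int_t^Tg(s)\,ds+C/N+C\int_0^Tg(s)\,ds$. The non-local term $\int_0^Tg$ blocks a bare backward Gr\"onwall argument, but for $T$ small its coefficient can be absorbed to give $g(t)\le C/N$; feeding this back into the particlewise bounds gives $\sup_i\E|\overline{Y}^i_t-Y^i_t|^2\le C/N$ uniformly in $t$, and since the right-hand sides bounding $\overline{U}^i-U^i$ and $\overline{S}-(K_T-K)$ are dominated by $\mathbb{F}^{(N)}$-martingales, Doob upgrades this to $\E[\sup_{t\le T}|\overline{Y}^i_t-Y^i_t|^2]\le C/N$. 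For general $T$ I would partition $[0,T]$ into finitely many small subintervals and iterate backwards exactly as in the proof of Theorem~\ref{thm4.1}, the terminal mismatch $\overline{Y}^i_T-Y^i_T=\theta^i-\xi^i=\Psi^{(N)}_T+\Phi^{(N)}_T$ being $O(N^{-1/2})$ in $L^2$. For $\overline{Z}^i-Z^ie_i$ I would then apply It\^o's formula to $|\overline{Y}^i_t-Y^i_t|^2$ on $[0,T]$ and take expectations (the stochastic integral having zero mean), getting
\[
\E\!\int_0^T|\overline{Z}^i_s-Z^i_se_i|^2\,ds\le\E|\theta^i-\xi^i|^2+2L\!\int_0^T\!\E|\overline{Y}^i_s-Y^i_s|^2\,ds+2\Big|\E\!\int_0^T\!(\overline{Y}^i_s-Y^i_s)\,d\overline{K}^{(N)}_s\Big|+2\Big|\E\!\int_0^T\!(\overline{Y}^i_s-Y^i_s)\,dK_s\Big|;
\]
the first two terms are $O(N^{-1})$, while by Cauchy--Schwarz the last two are $\le(\E\sup_s|\overline{Y}^i_s-Y^i_s|^2)^{1/2}$ times the $L^2$-norm of the total variation of $\overline{K}^{(N)}$ (resp. $K$), which are $O(1)$ by Proposition~\ref{prop4.2'} and (A1), so the whole right-hand side is $O(N^{-1/2})$. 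Finally, subtracting the forward equations in \eqref{eq7} and \eqref{nonlineary_approx} and using $\overline{K}^{(N)}_0=K_0=0$ gives
\[
\overline{K}^{(N)}_t-K_t=(\overline{Y}^i_0-Y^i_0)-(\overline{Y}^i_t-Y^i_t)-\int_0^t\!\big(f^i(s,\overline{Y}^i_s)-f^i(s,Y^i_s)\big)\,ds+\int_0^t\!\Big(\textstyle\sum_{j=1}^N\overline{Z}^{i,j}_s\,dB^j_s-Z^i_s\,dB^i_s\Big),
\]
and taking $\E\sup_{t\le T}|\cdot|^2$, Burkholder--Davis--Gundy turns the last term into $C\,\E\int_0^T|\overline{Z}^i_s-Z^i_se_i|^2\,ds=O(N^{-1/2})$ while the remaining terms are $O(N^{-1})$, yielding the $N^{-1/2}$ rate for $K$.

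The hard part will be the combined fluctuation and Gr\"onwall step: establishing $\E[\sup_{t\le T}|\overline{\psi}^{(N)}_t-\psi_t|^2]=O(N^{-1})$ at the sharp rate --- which rests on the i.i.d. structure of the limiting copies, the martingale representation, the fourth-moment control of $Z^1$, and Lemma~\ref{convergepsi} --- and then closing the Gr\"onwall estimate despite the non-local feedback term $\int_0^Tg$, which forces the small-time-plus-concatenation device. A secondary technical nuisance is making the two Dynkin-game representations \eqref{baryi} and \eqref{representationofS} literally comparable, since the two optional times appear there in swapped roles and the $\{\sigma\wedge\tau=T\}$ payoffs must be reconciled using $\Psi_T=\Phi_T=0$.
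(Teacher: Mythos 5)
Your proposal is correct and follows essentially the same route as the paper's proof: the decomposition $\overline{Y}^i=\overline{U}^i+\overline{S}$ versus $Y^i=U^i+(K_T-K_\cdot)$, the $1$-Lipschitz stability of the Dynkin-game values in the obstacles and terminal payoffs, the splitting of the obstacle error into a Lipschitz-dominated feedback term plus the i.i.d.\ fluctuation term controlled by Lemma \ref{convergepsi}, then It\^o's formula for the $Z$-rate and the forward equation plus BDG for the $K$-rate. The only deviation is your small-time-plus-concatenation device for closing the Gr\"onwall step; this is unnecessary because, as in the paper's inequality \eqref{eq9'}, the conditional expectations defining $\overline{U}^j_t-U^j_t$ for $t\in[r,T]$ only involve $s\in[t,T]\subset[r,T]$, so every feedback integral is already localized to $[r,T]$ and a single backward Gr\"onwall on $r\mapsto\E[\frac1N\sum_i\sup_{t\in[r,T]}|\overline{Y}^i_t-Y^i_t|^2]$ suffices on all of $[0,T]$.
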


Before proving Theorem \ref{thm4.3}, we need to first prove a technical lemma, which will also be needed in the next section. 
	Let  $  \psi^{(N)}$ and $  \phi^{(N)}$ be processes satisfying respectively 
	\begin{align}\label{beforeconvergepsi}
		&\frac{1}{N}\sum_{j=1}^N h(  \psi^{(N)}_t+  U^j_t)=l_t\quad\text{and} \quad \frac{1}{N}\sum_{j=1}^N h(  \phi^{(N)}_t+  U^j_t)=u_t.
	\end{align}
 \begin{lemma}\label{convergepsi} Under Conditions (B1) and (B2), if $\sup_{t\in[0,T]}\E |  Z^1_t|^4<\infty$ and $h$ is twice continuously differentiable with bounded derivatives, then there exists a constant $C$ independent of $N$, such that 
         \begin{align*}
             \E\Bigg[\sup_{s\in[0,T]}\Big|\psi_s-  \psi^{(N)}_s\Big|^2\Bigg]\leq CN^{-1} \quad\text{and} \quad\E\Bigg[\sup_{s\in[0,T]}\Big|\phi_s-  \phi^{(N)}_s\Big|^2\Bigg]\leq CN^{-1}.
         \end{align*}
 \end{lemma}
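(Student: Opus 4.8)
The idea is to compare the defining equations of $\psi_s$ and $\psi^{(N)}_s$ in \eqref{beforeconvergepsi} and to exploit the bi-Lipschitz bound (B2) on $h$ to convert a difference of averages of $h$-values into a difference of the quantities $\psi_s$ and $\psi^{(N)}_s$ themselves. Concretely, subtracting the two identities in \eqref{beforeconvergepsi} and using that $h$ is bi-Lipschitz, one obtains pointwise in $s$ and $\omega$
\begin{align*}
\gamma_l|\psi_s-\psi^{(N)}_s| \le \Bigg|\frac{1}{N}\sum_{j=1}^N\Big(h(\psi_s+U^j_s)-h(\psi^{(N)}_s+U^j_s)\Big)\Bigg|
= \Bigg|\frac{1}{N}\sum_{j=1}^N h(\psi_s+U^j_s) - l_s\Bigg|,
\end{align*}
and since $\E[h(\psi_s+U^j_s)]=l_s$ for each $j$, the right-hand side is exactly the fluctuation of an empirical mean of i.i.d.\ terms around its expectation. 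So the whole problem reduces to controlling $\E\big[\sup_{s\in[0,T]}|\frac1N\sum_{j=1}^N(g_s(U^j_s)-\E g_s(U^j_s))|^2\big]$ where $g_s=h(\psi_s+\cdot)$, uniformly in $s$.

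\textbf{Key steps.} First I would make the reduction above rigorous, being careful that $\psi_s$ is deterministic while $\psi^{(N)}_s$ is $\mathcal{F}^{(N)}_s$-measurable, so the subtraction is legitimate and $\gamma_l|\psi_s-\psi^{(N)}_s|$ is bounded by the empirical fluctuation at the \emph{same} argument $\psi_s$. Second, since the $U^j$ are i.i.d.\ and square integrable (with the fourth-moment bound coming from $\sup_t\E|Z^1_t|^4<\infty$ via equation \eqref{baryi}, the martingale representation of $U^j$, and the BDG/Doob machinery), for each fixed $s$ the variance of the empirical mean is $O(1/N)$. Third — and this is the step that needs the smoothness hypothesis on $h$ — to pass from a pointwise-in-$s$ bound to a $\sup_s$ bound, I would write $h(\psi_s+U^j_s)$ as a smooth function of the $\mathcal{F}^{(N)}_s$-martingale pieces, apply It\^o's formula (using $h\in C^2$ with bounded $h',h''$), and then estimate the resulting continuous-time martingale and bounded-variation parts by BDG and Gr\"onwall, exactly as in the analogous Lemma-style arguments of \cite{Briand2020Particles,briand2021particles}. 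This gives $\E[\sup_{s\in[0,T]}|\frac1N\sum_j(g_s(U^j_s)-\E g_s(U^j_s))|^2]\le CN^{-1}$. Dividing by $\gamma_l^2$ yields the claim for $\psi$; the argument for $\phi$ is identical with $u_s$ in place of $l_s$ and $\gamma_l|\phi_s-\phi^{(N)}_s|$ bounded by the fluctuation of $\frac1N\sum_j h(\phi_s+U^j_s)$ around $u_s$.

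\textbf{Main obstacle.} The routine part is the i.i.d.\ variance estimate at a fixed time; the delicate part is the uniformity in $s\in[0,T]$, because $\psi_s$ itself varies (continuously, thanks to (B1)) and one cannot simply take a supremum inside an expectation of i.i.d.\ averages without a continuity/It\^o argument. This is precisely where the extra regularity of $h$ (twice continuously differentiable with bounded derivatives) is used: it lets one represent $s\mapsto \frac1N\sum_j h(\psi_s+U^j_s)$ as a semimartingale with coefficients controlled uniformly in $N$, so that BDG delivers the $\sup_s$ bound at the same $N^{-1}$ rate. A secondary technical point to handle carefully is verifying that $\psi$ (and $\phi$) is well-defined and of finite variation in $s$ under (B1)–(B2) — but this follows from the strict separation $\inf_t(u_t-l_t)>0$ together with the bi-Lipschitz property, exactly as in the well-posedness results quoted from \cite{falkowski2022backward}.
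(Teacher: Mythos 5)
Your proposal is correct and follows essentially the same route as the paper: the bi-Lipschitz reduction $\gamma_l|\psi_s-\psi^{(N)}_s|\le|\frac1N\sum_j h(\psi_s+U^j_s)-l_s|$ (which the paper phrases via the map $H(x,\mu)=\int h(x+y)\mu(dy)$ and Lemma 2.1 of \cite{Briand2020Particles}), followed by an It\^o expansion of $h(\psi_t+U^j_t)$ using the local Lipschitz continuity of $t\mapsto\psi_t$ and $h\in C^2$ with bounded derivatives, and then Doob's inequality plus i.i.d.\ variance bounds — with the fourth moment of $Z^1$ entering exactly through the quadratic-variation term $h''(V^1_s)|Z^1_s|^2$. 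The only imprecision is your parenthetical suggesting the fourth-moment bound is derived from \eqref{baryi}; it is simply the standing hypothesis, but this does not affect the argument.
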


 \begin{proof}
     It suffices to prove the convergence rate for $\psi-  \psi^{(N)}$, since that for $\phi-  \phi^{(N)}$ can be derived analogously. For any $t\in[0,T]$, let $\nu_t$ be the common law of the random variables $\{  U^i_t\}_{1\leq i\leq N}$ and let $\nu^{(N)}_t$ be their empirical law, i.e.,
     \begin{align*}
         \nu_t^{(N)}:=\frac{1}{N}\sum_{i=1}^N \delta_{  U^i_t},
     \end{align*}
     where $\delta(\cdot)$ is the 
     Dirac delta function. 
     For any $\mu\in \mathcal{P}_1(\mathbb{R})$, where $\mathcal{P}_1(\mathbb{R})$ is the set of probability measures on $\mathbb{R}$ with finite first moment, and for any  $x\in\mathbb{R}$, we define
     \begin{align*}
           H(x,\mu):=\int h(x+y)\mu(dy).
     \end{align*}
By the definitions of $\psi_t$ and $  \psi^{(N)}_t$, we have
     \begin{align*}
           H(  \psi^{(N)}_t,\nu^{(N)}_t)=  H(\psi_t,\nu_t)=l_t.
     \end{align*}
     By Lemma 2.1 in \cite{Briand2020Particles}, for any $\mu\in\mathcal{P}_1(\mathbb{R})$ and $x,y\in\mathbb{R}$, we have
     \begin{align*}
         \gamma_l|x-y|\leq |  H(x,\mu)-  H(y,\mu)|\leq \gamma_u|x-y|.
     \end{align*}
     All the above analysis indicates that
     \begin{align*}
         \Big|  \psi^{(N)}_t-\psi_t\Big|\leq \frac{1}{\gamma_l}\Big|  H(  \psi^{(N)}_t,\nu_t^{(N)})-  H(\psi_t,\nu^{(N)}_t)\Big|\leq \frac{1}{\gamma_l}\Big|  H(\psi_t,\nu_t)-  H(\psi_t,\nu^{(N)}_t)\Big|.
     \end{align*}
     It follows that 
     \begin{equation}\label{eq10}
         \E\Bigg[\sup_{s\in[0,T]}\Big|\psi_s-  \psi^{(N)}_s\Big|^2\Bigg]\leq \frac{1}{\gamma_l^2}\E\Bigg[\sup_{t\in[0,T]}\Big|  H(\psi_t,\nu_t)-  H(\psi_t,\nu^{(N)}_t)\Big|^2\Bigg].
     \end{equation}
 
 By the proof of Proposition 2.7 in \cite{Briand2020Particles} or the proof of Theorem 4.3 in \cite{briand2021particles}, $t\rightarrow \psi_t$ is locally Lipschitz. Let $\psi'$ be the derivative of $\psi$. We define  
     \begin{align*}
           H(\psi_t,\nu^{(N)}_t)-  H(\psi_t,\nu_t)=\frac{1}{N}\sum_{i=1}^N\Big(h(V^i_t)-\E h(V^i_t)\Big),
     \end{align*}
     where $V^i_t:=\psi_t+  U^i_t.$
      Applying It\^{o}'s formula, we obtain that 
     \begin{align*}
         H(\psi_t,\nu^{(N)}_t)-  H(\psi_t,\nu_t)
         &=\frac{1}{N}\sum_{i=1}^N\big(h(V^i_T)-\E h(V^i_T)\big)-\int_t^T\frac{1}{N}\sum_{i=1}^Nh'(V^i_t)  Z^i_s dB^i_s\\
         &\quad-\int_t^T \frac{1}{N}\sum_{i=1}^N\big( \eta^i_s-\E  \eta^i_s \big)ds-\frac{1}{2}\int_t^T \frac{1}{N}\sum_{i=1}^N\big(\zeta^i_s-\E\zeta^i_s\big)ds,
     \end{align*}
where
     \begin{align*}
         \eta^i_s=h'(V^i_s)(\psi'_s-f^i(s,  Y^i_s))\quad\text{and} \quad \zeta^i_s=h''(V^i_s)|  Z^i_s|^2.
     \end{align*}
     Applying Doob's inequality and H\"{o}lder's inequality, we obtain that
     \begin{align*}
         &\hspace{-1cm}\E\Bigg[\sup_{t\in[0,T]}\Big|  H(\psi_t,\nu_t)-  H(\psi_t,\nu^{(N)}_t)\Big|^2\Bigg]\\
         \leq &\frac{4}{N^2}\Bigg\{\E\Bigg(\sum_{i=1}^N\Big(h(V^i_T)-\E [h(V^i_T)]\Big)\Bigg)^2+T\int_0^T \E \Bigg(\sum_{i=1}^N\Big(\eta^i_s-\E[\eta^i_s]\Big)\Bigg)^2 ds\\
&\hspace{2cm}+4\int_0^T\sum_{i=1}^N\E\Big[|h'(V^i_t)  Z^i_s|^2\Big] ds+\frac{T}{4}\int_t^T \E \Bigg(\sum_{i=1}^N\Big(\zeta^i_s-\E \zeta^i_s\Big)\Bigg)^2 ds\Bigg\}\\
         \leq 
         &\frac{C}{N}\Bigg\{\operatorname{Var}[h(V^1_T)]+\int_0^T \operatorname{Var}[\eta^1_s]ds+\int_0^T \E\Big|h'(V^1_t)  Z^1_s\Big|^2ds+\int_0^T \operatorname{Var}[\zeta^1_s]ds\Bigg\}\\
         \leq &\frac{C}{N}\Bigg\{\E|V^1_T|^2+\int_0^T\E|f^1(s,  Y_s^1)|^2ds+\int_0^T \E|  Z^1_s|^2ds+\int_0^T \E|  Z^1_s|^4ds\Bigg\}\\
         \leq&\frac{C}{N}\Bigg\{\E[\xi^2]+|\psi_T|^2+\E\Bigg[\int_0^T|f(t,0)|^2dt\Bigg]+\E\Bigg[\sup_{t\in[0,T]}|  Y^1_t|^2\Bigg]+\int_0^T \E|  Z^1_s|^4ds\Bigg\}.
     \end{align*}
      Plugging the above estimate into equation \eqref{eq10}, we obtain the desired result. 
 \end{proof}

\begin{proof}[Proof of Theorem \ref{thm4.3}]
	We proceed with the proof in the following two steps:
 \medskip
	
	\noindent\textbf{Step 1.} For the convergence rate of $K$, we claim that 
	\begin{align*}
		\E\Bigg[\sup_{t\in[0,T]}|K^{(N)}_t-K_t|^2\Bigg]\leq C\Bigg(\E\Bigg[\sup_{t\in[0,T]}|\overline{Y}^i_t-  Y^i_t|^2\Bigg]\Bigg)^{1/2},
  \end{align*}
 and for the convergence rate of $Z$, we claim that \begin{align*}\E\Bigg[\int_0^T|\overline{Z}^i_t-  Z^i_t e_i|^2dt\Bigg]\leq C\Bigg(\E\Bigg[\sup_{t\in[0,T]}|\overline{Y}^i_t-  Y^i_t|^2\Bigg]\Bigg)^{1/2}.
	\end{align*}
	In fact, noting that by equation \eqref{eqn:theta_def} and the IPS \eqref{eq7} we have $\overline{Y}^i_T=\theta^i=\xi^i+\Psi^{(N)}_T+\Phi^{(N)}_T$; by equation \eqref{nonlineary_approx}  we have $  Y^i_T=\xi^i$. Hence, $\Psi^{(N)}_T+\Phi^{(N)}_T=\overline{Y}^i_t-  Y^i_t$. Applying It\^{o}'s formula to $|\overline{Y}^i_t-  Y^i_t|^2$ and taking expectations yield that 
	\begin{align*}		
		&\hspace{-0.4cm}\E\Bigg[\int_0^T|\overline{Z}^i_t-  Z^i_t e_i|^2dt\Bigg]\\
  \leq& \E\Bigg[|\Psi^{(N)}_T+\Phi^{(N)}_T|^2+2\int_0^T
		|\overline{Y}^i_s-  Y^i_s|\cdot |f^i(s,\overline{Y}_s^i)-f^i(s,  Y_s^i)|ds\\
  &\hspace{6cm}+2\int_0^T|\overline{Y}^i_s-  Y^i_s|(dK^{(N)}_s-dK_s)\Bigg]\\
		\leq& C\Bigg\{\E|\Psi^{(N)}_T+\Phi^{(N)}_T|^2+\E\Big[\sup_{t\in[0,T]}|\overline{Y}^i_t-  Y^i_t|^2\Big]
		\\
	&\qquad+\Big(\E\Big[\sup_{t\in[0,T]}|\overline{Y}^i_t-  Y^i_t|^2\Big]\Big)^{1/2}\Big(\E|K^{(N),+}_T|^2+\E|K^{(N),-}_T|^2+\E|K^+_T|^2+\E|K^-_T|^2\Big)^{1/2}\Bigg\}.
	\end{align*}
 Then by Proposition \ref{prop4.2'}, 
	\begin{align*}
		\E\Bigg[\int_0^T|\overline{Z}^i_t-  Z^i_t e_i|^2dt\Bigg]\leq C\Bigg(\E\Bigg[\sup_{t\in[0,T]}|\overline{Y}^i_t-  Y^i_t|^2\Bigg]\Bigg)^{1/2}.
	\end{align*}
	Set $  Z^{i,j}\equiv 0$ for $j\neq i$ and $  Z^{i,i}\equiv   Z^i$. Then we can write 
	\begin{align*}
		K^{(N)}_t-K_t=&\overline{Y}^i_0-  Y^i_0-\overline{Y}^i_t+  Y^i_t-\int_0^t \Big(f^i(s,\overline{Y}^i_s)-f^i(s,  Y^i_s)\Big)ds\\
		&+\int_0^t \sum_{j=1}^N\Big(\overline{Z}^{i,j}_s-  Z^{i,j}_s\Big)dB^j_s.
	\end{align*}
	It is easy to check that  
	\begin{align*}
		\E\Bigg[\sup_{t\in[0,T]}|K^{(N)}_t-K_t|^2\Bigg]&\leq C\Bigg(\E\Bigg[\sup_{t\in[0,T]}|\overline{Y}^i_t-  Y^i_t|^2\Bigg]+\E\Bigg[\int_0^T\sum_{j=1}^N|\overline{Z}^{i,j}_t-  Z^{i,j}_t|^2dt\Bigg] \Bigg)\\
        &=C\Bigg(\E\Bigg[\sup_{t\in[0,T]}|\overline{Y}^i_t-  Y^i_t|^2\Bigg]+\E\Bigg[\int_0^T|\overline{Z}^i_t-  Z^i_t e_i|^2dt\Bigg] \Bigg) ,
	\end{align*}
	as desired. 
	\bigskip 
	
	\noindent\textbf{Step 2.} It remains to prove the convergence rate for the $Y$-term.  Set 
	\begin{align*}
		\overline{U}^i_t=\E\Bigg[\xi^i+\int_t^Tf^i(s,\overline{Y}^i_s)ds \,\Bigg|\,\mathcal{F}^{(N)}_t\Bigg].
	\end{align*}
	Let $\overline{\psi}^{(N)}$ and $\overline{\phi}^{(N)}$
be processes satisfying respectively
	\begin{align*}
		\frac{1}{N}\sum_{j=1}^N h(\overline{\psi}^{(N)}_t+\overline{U}^j_t)=l_t\quad\text{and} \quad \frac{1}{N}\sum_{j=1}^N h(\overline{\phi}^{(N)}_t+\overline{U}^j_t)=u_t.
	\end{align*}
Similar to the proof of Proposition \ref{prop4.2'},	recalling the definitions of $\Psi^{(N)}_T$ and $\Phi^{(N)}_T$ in equation \eqref{PsinPhin'} and the definitions of $\psi^{(N)}$ and $\phi^{(N)}$ in \eqref{beforeconvergepsi}, it is easy to check that 
	\begin{align}\label{Psi+Phi-}
		\Psi^{(N)}_T=( {\psi}^{(N)}_T)^+\quad\text{and}\quad \Phi^{(N)}_T=-({\phi}^{(N)}_T)^-.
	\end{align}
	Equations \eqref{psi+phi-} and \eqref{Psi+Phi-} imply that 
	\begin{align}\label{pp}
		|\Phi_T-\Phi^{(N)}_T|\leq |\phi_T- {\phi}^{(N)}_T|\quad\text{and}\quad |\Psi_T-\Psi^{(N)}_T|\leq |\psi_T- {\psi}^{(N)}_T|.
	\end{align}
	Similar to the proof of Proposition \ref{prop4.2'}, for any $1\leq i\leq N$, we have
	\begin{align*}
		\overline{Y}^i_t=\overline{U}^i_t+\essinf_{\sigma\in \mathcal{T}^{(N)}_{t,T}}\esssup_{\tau\in\mathcal{T}^{(N)}_{t,T}}\E\Big[R^{(N)}(\sigma,\tau)\,\Big|\,\mathcal{F}^{(N)}_t\Big],
	\end{align*}
	where 
	\begin{equation}
		{R}^{(N)}(\sigma,\tau):=(\Psi^{(N)}_T+\Phi^{(N)}_T)\mathbbm{1}_{\{\sigma\wedge \tau=T\}}+\overline{\psi}^{(N)}_\tau \mathbbm{1}_{\{\tau<T,\tau\leq \sigma\}}+\overline{\phi}^{(N)}_\sigma \mathbbm{1}_{\{\sigma<\tau\}}.
	\end{equation}
 It together with equations \eqref{baryi} and \eqref{pp} gives the following inequality: for any $t\in[r,T]$ with $r$ being a constant in $[0,T]$,
	\begin{align*}
		|\overline{Y}^i_t-  Y^i_t|\leq &|\overline{U}^i_t-  U^i_t|+\E\Big[|\Phi_T-\Phi^{(N)}_T|\,\Big|\,\mathcal{F}^{(N)}_t\Big]+\E\Big[|\Psi_T-\Psi^{(N)}_T|\,\Big|\,\mathcal{F}^{(N)}_t\Big]\\
		&+\E\Bigg[\sup_{s\in[t,T]}|\phi_s-\overline{\phi}^{(N)}_s|\,\Bigg|\,\mathcal{F}^{(N)}_t\Bigg]+\E\Bigg[\sup_{s\in[t,T]}|\psi_s-\overline{\psi}^{(N)}_s|\,\Bigg|\,\mathcal{F}^{(N)}_t\Bigg]\\
		\leq &L\E\Bigg[\int_r^T|\overline{Y}^i_s-  Y^i_s|ds\,\Bigg|\,\mathcal{F}^{(N)}_t\Bigg]+\E\Bigg[\sup_{s\in[r,T]}|  \overline{\phi}^{(N)}_s-{\phi}^{(N)}_s|\,\Bigg|\,\mathcal{F}^{(N)}_t\Bigg]\\
  &+\E\Bigg[\sup_{s\in[r,T]}|  \overline{\psi}^{(N)}_s-{\psi}^{(N)}_s|\,\Bigg|\,\mathcal{F}^{(N)}_t\Bigg]+2\E\Bigg[\sup_{s\in[r,T]}|\phi_s-  {\phi}^{(N)}_s|\,\Bigg|\,\mathcal{F}^{(N)}_t\Bigg]\\
  &+2\E\Bigg[\sup_{s\in[r,T]}|\psi_s-  {\psi}^{(N)}_s|\,\Bigg|\,\mathcal{F}^{(N)}_t\Bigg].
	\end{align*}
By Doob's inequality and H\"{o}lder's inequality, there exists a constant $C$ independent of $N$, such that
	\begin{align}\label{hatyi'}
		&\hspace{-0.4cm}\E \Bigg[\sup_{t\in[r,T]}|\overline{Y}^i_t-  Y^i_t|^2 \Bigg]\\
  \leq & 
  C\Bigg\{\E\Bigg[\int_r^T|\overline{Y}^i_s-  Y^i_s|^2ds\Bigg] +\E \Bigg[\sup_{s\in[r,T]}|\overline{\phi}^{(N)}_s-  \phi^{(N)}_s|^2\Bigg] +\E \Bigg[\sup_{s\in[r,T]}|\overline{\psi}^{(N)}_s-  \psi^{(N)}_s|^2\Bigg] \nonumber\\
			&\hspace{3.8cm}+\E \Bigg[\sup_{s\in[0,T]}|\phi_s- {\phi}^{(N)}_s|^2\Bigg]+\E \Bigg[\sup_{s\in[0,T]}|\psi_s-  {\psi}^{(N)}_s|^2 \Bigg]\Bigg\}.\nonumber
	\end{align} 
	By equation \eqref{eqn:Lipschitz}, for any $t\in[r,T]$,  we have
	\begin{align*}
		&|\overline{\phi}^{(N)}_t-  \phi^{(N)}_t|\leq \frac{a}{N}\sum_{j=1}^N |\overline{U}^j_t-  U^j_t|\leq La\E\Bigg[\frac{1}{N}\sum_{j=1}^N\int_r^T |\overline{Y}^j_u-  Y^j_u|du\,\Bigg|\,\mathcal{F}^{(N)}_t\Bigg],\\
		&|\overline{\psi}^{(N)}_t- \psi^{(N)}_t|\leq \frac{a}{N}\sum_{j=1}^N |\overline{U}^j_t-  U^j_t|\leq La\E\Bigg[\frac{1}{N}\sum_{j=1}^N\int_r^T |\overline{Y}^j_u-  Y^j_u|du\,\Bigg|\,\mathcal{F}^{(N)}_t\Bigg].
	\end{align*}
	Plugging the above inequality into equation \eqref{hatyi'}, Doob's inequality and H\"{o}lder's inequality imply that for any $r\in[0,T]$, 
	\begin{align}\label{eq9'}
		\E\Bigg[\sup_{t\in[r,T]}|\overline{Y}^i_t-  Y^i_t|^2\Bigg]\leq C\Bigg\{\E\Bigg[\int_r^T|\overline{Y}^i_s-  Y^i_s|^2ds\Bigg]+\E\Bigg[\frac{1}{N}\sum_{j=1}^N\int_r^T |\overline{Y}^j_s-  Y^j_s|^2ds\Bigg]\Bigg\}+C J^{(N)},
	\end{align}
	where 
	\begin{align*}
		J^{(N)}:=\E \Bigg[\sup_{s\in[0,T]}|\phi_s-  \phi^{(N)}_s|^2\Bigg]+\E\Bigg[\sup_{s\in[0,T]}|\psi_s-  \psi^{(N)}_s|^2.\Bigg]
	\end{align*}
	It follows that 
	\begin{align*}
		\E\Bigg[\frac{1}{N}\sum_{i=1}^N\sup_{t\in[r,T]}|\overline{Y}^i_t-  Y^i_t|^2\Bigg]\leq &C\E\Bigg[\frac{1}{N}\sum_{i=1}^N\int_r^T |\overline{Y}^i_s-  Y^i_s|^2ds\Bigg]+C J^{(N)}\\
		\leq &C\int_r^T\E\Bigg[ \frac{1}{N}\sum_{i=1}^N\sup_{s\in[u,T]}|\overline{Y}^i_s-  Y^i_s|^2\Bigg]du+C J^{(N)},
	\end{align*}
	and then by Gr\"onwall's inequality that
	\begin{align*}
		\E\Bigg[\frac{1}{N}\sum_{i=1}^N\sup_{t\in[0,T]}|\overline{Y}^i_t-  Y^i_t|^2\Bigg]\leq C J^{(N)}.
	\end{align*}
	Combining the above estimate with equation \eqref{eq9'}, we finally obtain that
	\begin{align*}
		\E\Bigg[\sup_{t\in[0,T]}|\overline{Y}^i_t-  Y^i_t|^2\Bigg]\leq CJ^{(N)},
	\end{align*} 
	where $C$ is a constant only depending on $a$, $L$, and $T$. Applying Lemma \ref{convergepsi}, we obtain the desired result.
\end{proof}

\section{The case of nonlinear reflection}
\label{sec:nonlinear_reflection}
In this section, we examine the case of nonlinear reflection. 
 Similar to the previous section, we commence by establishing the well-posedness of a simplified IPS \eqref{eq7_2_s_tilde} in Section \ref{sec:nonlinear_constant}. Building upon this foundation, we  investigate the well-posedness of the IPS  \eqref{eq7_2_tilde} in Section \ref{sec:nonlinear_nonconstant}, and then we attain the POC in Section \ref{sec:nonlinear_Approximation}.

\subsection{The particle system with constant driver}
\label{sec:nonlinear_constant}
We aim to approximate the solution to the doubly MRBSDE \eqref{nonlinearyz} recalled here as follows:
\begin{equation*}
	\begin{cases}
		Y_t=\xi+\int_t^T f(s,  Y_s)ds-(M_T-M_t)+K_T-K_t, \quad \E[h(Y_t)]\in [l_t,u_t],\quad  t\in[0,T], \vspace{0.2cm}\\
		K=K^+ -K^- \quad\textrm{with}\quad  K^+,K^-\in \mathcal{I}([0,T];\,\mathbb{R}),\vspace{0.2cm}\\
		\int_0^T (\E[h(  Y_t)]-l_t)dK_t^+=\int_0^T (u_t-\E[h(  Y_t)])dK^-_t=0,
	\end{cases}
\end{equation*}
 using the IPS \eqref{eq7_2_tilde} recalled here as follows: for any $1\leq i\leq N$,
\begin{equation*}
	\begin{cases}
		\widetilde{Y}^i_t=\E\Big[\xi^i+\int_t^T f^i(s,\widetilde{Y}^i_s) ds\,\Big|\,\mathcal{F}^{(N)}_t\Big]+(\Psi^{(N)}_T+\Phi^{(N)}_T)-(\widetilde{M}^{(N)}_T-\widetilde{M}^{(N)}_t)+\widetilde{K}^{(N)}_T-\widetilde{K}^{(N)}_t,\vspace{0.2cm}\\
		l_t\leq \frac{1}{N}\sum_{i=1}^N h(\widetilde{Y}^i_t)\leq u_t, \quad t\in[0,T],\vspace{0.2cm}\\
		\widetilde{K}^{(N)}=\widetilde{K}^{(N),+}-\widetilde{K}^{(N),-}\quad\textrm{with}\quad \widetilde{K}^{(N),+},\widetilde{K}^{(N),-}\in \mathcal{A}(\mathbb{F}^{(N)};\mathbb{R}),\vspace{0.2cm}\\
		\int_0^T(\frac{1}{N}\sum_{i=1}^N h(\widetilde{Y}^i_t)-l_t)d\widetilde{K}^{(N),+}_t=\int_0^T (u_t-\frac{1}{N}\sum_{i=1}^N h(\widetilde{Y}^i_t))d\widetilde{K}^{(N),-}_t=0,
	\end{cases}
\end{equation*}
where $\Psi^{(N)}_T$, $\Phi^{(N)}_T$ are defined in equation \eqref{PsinPhin'}. The IPS \eqref{eq7_2_tilde} is a multi-dimensional doubly RBSDE, whose solution is a family of processes $(\{\widetilde{Y}^i\}_{1\leq i\leq N},\widetilde{M}^{(N)},\widetilde{K}^{(N)})$ taking values in $\mathbb{R}^N\times \mathbb{R}\times \mathbb{R}$. To establish its well-posedness, we need to first study its special case that the driver $f$ does not depend on $y$, i.e.,
\begin{equation}\label{eq7_2_s_tilde}
	\begin{cases}
		\tilde{Y}^i_t=\E\Big[\xi^i+\int_t^T f^i(s) ds\,\Big|\,\mathcal{F}^{(N)}_t\Big]+(\Psi^{(N)}_T+\Phi^{(N)}_T)-(\tilde{M}^{(N)}_T-\tilde{M}^{(N)}_t)+\tilde{K}^{(N)}_T-\tilde{K}^{(N)}_t,\vspace{0.2cm}\\
		l_t\leq \frac{1}{N}\sum_{i=1}^N h(\tilde{Y}^i_t)\leq u_t, \quad t\in[0,T],\vspace{0.2cm}\\
		\tilde{K}^{(N)}=\tilde{K}^{(N),+}-\tilde{K}^{(N),-}\quad\textrm{with}\quad \tilde{K}^{(N),+},\tilde{K}^{(N),-}\in \mathcal{A}(\mathbb{F}^{(N)};\mathbb{R}),\vspace{0.2cm}\\
		\int_0^T(\frac{1}{N}\sum_{i=1}^N h(\tilde{Y}^i_t)-l_t)d\tilde{K}^{(N),+}_t=\int_0^T (u_t-\frac{1}{N}\sum_{i=1}^N h(\tilde{Y}^i_t))d\tilde{K}^{(N),-}_t=0.
	\end{cases}
\end{equation}

Define $\tilde{U}^{i}_t$ as
\begin{align}
	\tilde{U}^{i}_t:=\E\Bigg[\xi^i+\int_t^T f^i(s) ds\,\Bigg|\, \mathcal{F}^{(N)}_t\Bigg],
\end{align}
and define $\tilde{\psi}^{(N)}_t$ and $\tilde{\phi}^{(N)}_t$ as $\mathcal{F}^{(N)}_t$-measurable random variables satisfying respectively
\begin{align*}
	\frac{1}{N}\sum_{i=1}^N h(\tilde{U}^{i}_t+\tilde{\psi}^{(N)}_t)=l_t\quad\text{and}\quad \frac{1}{N}\sum_{i=1}^N h(\tilde{U}^{i}_t+\tilde{\phi}^{(N)}_t)=u_t.
\end{align*}
Readers may have already noticed that the definitions of $\tilde{U}^{i}_t$, $\tilde{\psi}^{(N)}_t$, and $\tilde{\phi}^{(N)}_t$ are presented in the same mathematical formula as those of $\bar{U}^{i}_t$, $\bar{\psi}^{(N)}_t$, and $\bar{\phi}^{(N)}_t$ in Section \ref{sec:linear_constant}. 
However, here we employ distinct notations primarily for the sake of consistency of the notations in this section, and secondarily because the function $h$ have different regularities in these two sections.
In Section \ref{sec:nonlinear_constant}, explicit expressions for $\bar{\psi}^{(N)}$ and $\bar{\phi}^{(N)}$ can be derived using equation \eqref{phinpsin}, thanks to the linearity of the loss function $h$. However, in the case of a nonlinear loss function, as is the focus here, we need to obtain the following estimation.
\begin{lemma}\label{lempsiphi}
Suppose Conditions (B1) and (B2) hold. The processes $\tilde{\psi}^{(N)}$ and $\tilde{\phi}^{(N)}$ belong to $\mathcal{S}^2(\mathbb{F}^{(N)};\mathbb{R})$ satisfying \begin{align}\label{separate1}
\tilde{\psi}^{(N)}_T\leq \Psi^{(N)}_T+{\Phi}^{(N)}_T\leq \tilde{\phi}^{(N)}_T\end{align}
and
\begin{align}\label{separate}
\inf_{t\in[0,T]}(\tilde{\phi}^{(N)}_t-\tilde{\psi}^{(N)}_t)>0.
\end{align}
Specifically, we have
\begin{equation}\label{psinphin}\begin{split}
&\E\Bigg[\sup_{t\in[0,T]}|\tilde{\psi}^{(N)}_t|^2\Bigg]\leq C\Bigg(1+\E\Bigg[|\xi|^2+\int_0^T |f(s)|^2ds\Bigg]\Bigg),\\
&\E\Bigg[\sup_{t\in[0,T]}|\tilde{\phi}^{(N)}_t|^2\Bigg]\leq C\Bigg(1+\E\Bigg[|\xi|^2+\int_0^T |f(s)|^2ds\Bigg]\Bigg).
\end{split}\end{equation}
\end{lemma}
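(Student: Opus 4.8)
The plan is to identify a single monotone object whose pathwise inverses are $\tilde\psi^{(N)}$ and $\tilde\phi^{(N)}$, and then read off all four assertions from its bi-Lipschitz property. For $t\in[0,T]$ and $x\in\mathbb{R}$ put
\[
H^{(N)}_t(x):=\frac{1}{N}\sum_{i=1}^N h(\tilde U^{i}_t+x),
\]
so that $\tilde\psi^{(N)}_t$ and $\tilde\phi^{(N)}_t$ are, by their very definition, the solutions of $H^{(N)}_t(\tilde\psi^{(N)}_t)=l_t$ and $H^{(N)}_t(\tilde\phi^{(N)}_t)=u_t$. Because $h$ is strictly increasing and bi-Lipschitz by (B2), each $H^{(N)}_t$ is a strictly increasing bijection of $\mathbb{R}$ onto $\mathbb{R}$ with
\[
\gamma_l|x-y|\le \big|H^{(N)}_t(x)-H^{(N)}_t(y)\big|\le \gamma_u|x-y|,\qquad x,y\in\mathbb{R}
\]
(this is the elementary averaging estimate recorded as Lemma~2.1 in \cite{Briand2020Particles}, applied to the empirical law of $\{\tilde U^i_t\}_{1\le i\le N}$), hence $\tilde\psi^{(N)}_t=(H^{(N)}_t)^{-1}(l_t)$ and $\tilde\phi^{(N)}_t=(H^{(N)}_t)^{-1}(u_t)$ are uniquely defined. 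As in Section~\ref{sec:linear_constant}, independence of the $B^i$ gives $\tilde U^i_t=\E[\xi^i+\int_t^T f^i(s)ds\,|\,\mathcal{F}^i_t]$, which admits a continuous version by the martingale representation theorem; thus $(t,x)\mapsto H^{(N)}_t(x)$ is jointly continuous, and combined with uniqueness of the inverse and the pathwise a priori bound obtained in the last step this will show that $t\mapsto\tilde\psi^{(N)}_t$ and $t\mapsto\tilde\phi^{(N)}_t$ are continuous; each is $\mathcal{F}^{(N)}_t$-measurable as a function of $(\tilde U^1_t,\dots,\tilde U^N_t,l_t,u_t)$, so both are continuous $\mathbb{F}^{(N)}$-adapted processes.

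Next I would deal with the separation statements. For \eqref{separate}: since $H^{(N)}_t$ is increasing and $u_t>l_t$, we have $\tilde\phi^{(N)}_t>\tilde\psi^{(N)}_t$, and the upper bound in the displayed bi-Lipschitz estimate gives $u_t-l_t=\big|H^{(N)}_t(\tilde\phi^{(N)}_t)-H^{(N)}_t(\tilde\psi^{(N)}_t)\big|\le \gamma_u(\tilde\phi^{(N)}_t-\tilde\psi^{(N)}_t)$, whence $\tilde\phi^{(N)}_t-\tilde\psi^{(N)}_t\ge \gamma_u^{-1}\inf_{s\in[0,T]}(u_s-l_s)>0$ by (B1), uniformly in $t$. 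For \eqref{separate1}: at $t=T$ we have $\tilde U^i_T=\xi^i$, so $H^{(N)}_T(x)=\frac1N\sum_i h(\xi^i+x)$; since $H^{(N)}_T$ is increasing, the claimed ordering $\tilde\psi^{(N)}_T\le \Psi^{(N)}_T+\Phi^{(N)}_T\le \tilde\phi^{(N)}_T$ is equivalent to $l_T\le \frac1N\sum_i h(\xi^i+\Psi^{(N)}_T+\Phi^{(N)}_T)\le u_T$, which is precisely the three-case computation already performed in Section~\ref{sec:linear_constant} (it uses only that $h$ is continuous and strictly increasing and that $\Psi^{(N)}_T\Phi^{(N)}_T=0$, the latter being a consequence of $\inf_t(u_t-l_t)>0$).

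Finally I would prove the moment estimates \eqref{psinphin}, which together with the previous step yield $\tilde\psi^{(N)},\tilde\phi^{(N)}\in\mathcal{S}^2(\mathbb{F}^{(N)};\mathbb{R})$. Comparing with the base point $x=0$ and using the lower bound in the bi-Lipschitz estimate together with $|h(y)|\le |h(0)|+\gamma_u|y|$,
\[
\gamma_l\,|\tilde\psi^{(N)}_t|\le \Big|l_t-\tfrac1N\sum_{i=1}^N h(\tilde U^i_t)\Big|\le |l_t|+|h(0)|+\frac{\gamma_u}{N}\sum_{i=1}^N|\tilde U^i_t|,
\]
so that, after squaring, taking $\sup_{t\in[0,T]}$ and using Jensen,
\[
\E\Big[\sup_{t\in[0,T]}|\tilde\psi^{(N)}_t|^2\Big]\le C\Big(1+\sup_{t\in[0,T]}|l_t|^2+\E\big[\sup_{t\in[0,T]}|\tilde U^1_t|^2\big]\Big),
\]
where I used that the $\tilde U^i$ are identically distributed. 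Here $l$ is bounded because it is continuous on $[0,T]$, and writing $\tilde U^1$ as a martingale minus $\int_0^{\cdot}f^1(s)ds$ and applying Doob's $L^2$-inequality and the Cauchy--Schwarz inequality gives $\E[\sup_t|\tilde U^1_t|^2]\le C(\E|\xi|^2+\E\int_0^T|f(s)|^2ds)$; this is the first bound in \eqref{psinphin}, and the second follows identically with $u$ in place of $l$. The whole argument is fairly mechanical once $H^{(N)}_t$ has been isolated; the only delicate point is the terminal ordering \eqref{separate1}, where one cannot simply invert an explicit formula as in \eqref{phinpsin} (that is available in the linear case only) but must reuse the sign analysis of $\Psi^{(N)}_T$ and $\Phi^{(N)}_T$ from Section~\ref{sec:linear_constant}. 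The device that replaces the lost explicit formula throughout is the uniform bi-Lipschitz bound for $H^{(N)}_t$, which is also what makes the moment estimates robust to the nonlinearity of $h$.
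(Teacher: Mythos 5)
Your proposal is correct and follows essentially the same route as the paper: the uniform bi-Lipschitz bound on the empirical average $x\mapsto\frac1N\sum_i h(\tilde U^i_t+x)$ (which the paper derives via the comparison estimate \eqref{eq4} rather than by quoting Lemma~2.1 of \cite{Briand2020Particles}, but it is the same computation), the three-case sign analysis of $\Psi^{(N)}_T,\Phi^{(N)}_T$ for \eqref{separate1}, the $\gamma_u$ upper bound for \eqref{separate}, and Doob plus H\"older for \eqref{psinphin}. The only cosmetic differences are that you compare at the base point $0$ where the paper compares at $x^l_t$ with $h(x^l_t)=l_t$, and that you spell out the continuity/adaptedness of $(H^{(N)}_t)^{-1}(l_t)$, which the paper leaves implicit.
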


\begin{proof}
We first prove equation \eqref{separate1}. It is obvious that  $\tilde{\phi}^{(N)}_t\geq \tilde{\psi}^{(N)}_t$ for $t\in[0,T]$. Recalling that $\Psi^{(N)}_T\Phi^{(N)}_T=0$, we prove the result for the following 3 cases:
when $\Psi^{(N)}_T>0$, we have
\begin{align*}
\frac{1}{N}\sum_{i=1}^N h(\xi^i+\Psi^{(N)}_T)= l_T<u_T,
\end{align*}
which implies that $\Psi^{(N)}_T=\tilde{\psi}^{(N)}_T$ and $\Phi^{(N)}_T=0\leq \tilde{\phi}^{(N)}_T$; 
when  $\Psi^{(N)}_T=\Phi^{(N)}_T=0$, we have
\begin{align*}
l_T\leq \frac{1}{N}\sum_{i=1}^N h(\xi^i)\leq u_T,
\end{align*}
which gives $\tilde{\psi}^{(N)}_T\leq 0\leq \tilde{\phi}^{(N)}_T$; 
when $\Phi^{(N)}_T<0$, we have
\begin{align*}
\frac{1}{N}\sum_{i=1}^N h(\xi^i+\Phi^{(N)}_T)= u_T>l_T,
\end{align*}
which implies that $\Phi^{(N)}_T=\tilde{\phi}^{(N)}_T$ and $\Psi^{(N)}_T=0\geq \tilde{\psi}^{(N)}_T$. By the above analysis, we obtain the desired result.

Then we show the estimates in equation \eqref{psinphin} hold. For this purpose, given $\breve{U}^i\in\mathcal{S}^2(\mathbb{F}^{(N)};\mathbb{R})$ for $1\leq i\leq N$, let $\breve{\psi}^{(N)}$ and $\breve{\phi}^{(N)}$ be progressively measurable processes such that, for $t\in[0,T]$,
\begin{align*}
\frac{1}{N}\sum_{i=1}^N h(\breve{U}^i_t+\breve{\psi}^{(N)}_t)=l_t\quad\text{and}\quad\frac{1}{N}\sum_{i=1}^N h(\breve{U}^i_t+\breve{\phi}^{(N)}_t)=u_t.
\end{align*}
We first claim that 
\begin{align}\label{eq4}
|\tilde{\psi}^{(N)}_t-\breve{\psi}^{(N)}_t|\leq \frac{\gamma_u}{\gamma_l N}\sum_{j=1}^N|\tilde{U}^j_t-\breve{U}^j_t|\quad\text{and}\quad |\tilde{\phi}^{(N)}_t-\breve{\phi}^{(N)}_t|\leq \frac{\gamma_u}{\gamma_l N}\sum_{j=1}^N|\tilde{U}^j_t-\breve{U}^j_t|.
\end{align}
We only prove the first inequality in equation \eqref{eq4} and the second inequality can be obtained analogously. In fact, noting that $h$ is bi-Lipschitz and increasing, for any $1\leq i\leq N$, we have
\begin{align*}
h\Bigg(\tilde{\psi}^{(N)}_t+\frac{\gamma_u}{\gamma_l N}\sum_{j=1}^N|\tilde{U}^j_t-\breve{U}^j_t|+\breve{U}^i_t\Bigg)\geq &\gamma_l\frac{\gamma_u}{\gamma_l N}\sum_{j=1}^N|\tilde{U}^j_t-\breve{U}^j_t|+h(\tilde{\psi}^{(N)}_t+\breve{U}^i_t)\\
\geq &\frac{\gamma_u}{N}\sum_{j=1}^N|\tilde{U}^j_t-\breve{U}^j_t|+h(\tilde{\psi}^{(N)}_t+\tilde{U}^{i}_t)-\gamma_u|\tilde{U}^{i}_t-\breve{U}^i_t|.
\end{align*}
Summing over $i$, by the definition of $\tilde{\psi}^{(N)}$, we obtain that 
\begin{align*}
\sum_{i=1}^Nh\Bigg(\tilde{\psi}^{(N)}_t+\frac{\gamma_u}{\gamma_l N}\sum_{j=1}^N|\tilde{U}^j_t-\breve{U}^j_t|+\breve{U}^i_t\Bigg)\geq \sum_{i=1}^Nh(\tilde{\psi}^{(N)}_t+\tilde{U}^{i}_t)=0,
\end{align*}
which implies that
\begin{align*}
\breve{\psi}^{(N)}_t\leq \tilde{\psi}^{(N)}_t+\frac{\gamma_u}{\gamma_l N}\sum_{j=1}^N|\tilde{U}^j_t-\breve{U}^j_t|.
\end{align*}
Then the first inequality of equation \eqref{eq4} follows from symmetry, which yields that
\begin{align}\label{psiN}
|\tilde{\psi}^{(N)}_t|\leq |x^l_t|+|\tilde{\psi}^{(N)}_t-x^l_t|\leq |x^l_t|+\frac{\gamma_u}{\gamma_l N}\sum_{j=1}^N|\tilde{U}^j_t|,
\end{align}
with $x^l_t$ being such that $h(x^l_t)=l_t$. 
By Doob's inequality and H\"{o}lder's inequality, there exists a constant $C$ independent of $N$, such that the first inequality in equation \eqref{psinphin} holds, and then similar estimate holds for $\tilde{\phi}^{(N)}$. Hence, $\tilde{\psi}^{(N)},\tilde{\phi}^{(N)}\in\mathcal{S}^2(\mathbb{F}^{(N)};\mathbb{R})$.

It remains to prove equation \eqref{separate} holds.  Noting that $h$ is bi-Lipschitz and increasing, we have
\begin{align*}
|l_t-u_t|&=\Bigg|\frac{1}{N}\sum_{i=1}^N h(\tilde{U}^{i}_t+\tilde{\psi}^{(N)}_t)-\frac{1}{N}\sum_{i=1}^N h(\tilde{U}^{i}_t+\tilde{\phi}^{(N)}_t)\Bigg|\\
&=\frac{1}{N}\sum_{i=1}^N \Big|h(\tilde{U}^{i}_t+\tilde{\psi}^{(N)}_t)-h(\tilde{U}^{i}_t+\tilde{\phi}^{(N)}_t)\Big|\\
&\leq \gamma_u\Big|\tilde{\psi}^{(N)}_t-\tilde{\phi}^{(N)}_t\Big|.
\end{align*}
The proof is complete.
\end{proof}

\begin{theorem}\label{thm3.1}
Suppose Conditions (B1) and (B2) hold. The IPS \eqref{eq7_2_s_tilde} has a unique solution $(\{\tilde{Y}^i\}_{1\leq i\leq N},\tilde{M}^{(N)},\tilde{K}^{(N)})\in \mathcal{S}^2(\mathbb{F}^{(N)};\mathbb{R}^N)\times \mathcal{M}_{loc}(\mathbb{F}^{(N)};\mathbb{R})\times \mathcal{A}_{BV}(\mathbb{F}^{(N)};\mathbb{R})$.
Furthermore, there exists a constant $C$ independent of $N$, such that, for all $1\leq i\leq N$,
\begin{align}
\label{eqn:thm3.1_estimate}
\E\Bigg[\sup_{t\in[0,T]}|\tilde{Y}^i_t|^2\Bigg]
\leq C\Bigg(1+\E|\xi|^2+\E\Bigg[\int_0^T|f(t)|^2dt\Bigg]\Bigg).
\end{align}
\end{theorem}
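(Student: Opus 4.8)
The plan is to mirror the construction used in Theorem \ref{thm5.1}, replacing the martingale-representation step (which produced the $\bar Z$ term) by a general local-martingale term, and to lean on the well-posedness of the classical doubly reflected BSDE of \cite{cvitanic1996backward} applied to the $\tilde S$-equation. Concretely, I would set
\begin{align*}
\tilde Y^i_t = \tilde U^i_t + \tilde S_t, \qquad 1\le i\le N,\ t\in[0,T],
\end{align*}
where $\tilde U^i_t = \E[\xi^i + \int_t^T f^i(s)\,ds\mid \mathcal F^{(N)}_t]$ and $\tilde S$ is the first component of the solution $(\tilde S,\tilde M^{(N)},\tilde K^{(N)})$ to the doubly reflected equation
\begin{equation}\label{eqn:tildetildeS}
\begin{cases}
\tilde S_t = \Psi^{(N)}_T+\Phi^{(N)}_T - (\tilde M^{(N)}_T-\tilde M^{(N)}_t) + \tilde K^{(N)}_T - \tilde K^{(N)}_t,\vspace{0.2cm}\\
\tilde\psi^{(N)}_t \le \tilde S_t \le \tilde\phi^{(N)}_t,\quad t\in[0,T],\vspace{0.2cm}\\
\tilde K^{(N)}=\tilde K^{(N),+}-\tilde K^{(N),-}\ \textrm{with}\ \tilde K^{(N),+},\tilde K^{(N),-}\in\mathcal A(\mathbb F^{(N)};\mathbb R),\vspace{0.2cm}\\
\int_0^T(\tilde S_t-\tilde\psi^{(N)}_t)d\tilde K^{(N),+}_t = \int_0^T(\tilde\phi^{(N)}_t-\tilde S_t)d\tilde K^{(N),-}_t = 0.
\end{cases}
\end{equation}
By Lemma \ref{lempsiphi} the obstacles $\tilde\psi^{(N)},\tilde\phi^{(N)}$ lie in $\mathcal S^2(\mathbb F^{(N)};\mathbb R)$, are completely separated (inequality \eqref{separate}), and satisfy $\tilde\psi^{(N)}_T \le \Psi^{(N)}_T+\Phi^{(N)}_T \le \tilde\phi^{(N)}_T$ (inequality \eqref{separate1}); this is exactly the input required to invoke the existence/uniqueness result of \cite{cvitanic1996backward} for \eqref{eqn:tildetildeS}, yielding $\tilde S\in\mathcal S^2$, $\tilde M^{(N)}\in\mathcal M_{loc}$, $\tilde K^{(N)}\in\mathcal A_{BV}$, together with the Snell-envelope representation $\tilde S_t = \essinf_\sigma\esssup_\tau \E[\tilde R^{(N)}(\sigma,\tau)\mid\mathcal F^{(N)}_t]$ with $\tilde R^{(N)}$ built from $\Psi^{(N)}_T+\Phi^{(N)}_T$, $\tilde\psi^{(N)}$, $\tilde\phi^{(N)}$ as in Theorem \ref{thm5.1}.

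Next I would verify that $(\{\tilde Y^i\},\tilde M^{(N)},\tilde K^{(N)})$ solves \eqref{eq7_2_s_tilde}. The defining identity for $\tilde U^i$ gives $\tilde U^i_t = \E[\xi^i+\int_t^T f^i(s)\,ds\mid\mathcal F^{(N)}_t]$, so adding $\tilde S$ and using the dynamics of $\tilde S$ from \eqref{eqn:tildetildeS} reproduces the first line of \eqref{eq7_2_s_tilde} (with $\widetilde M^{(N)}$ the sum of the martingale part of $\tilde U^i$ and $\tilde M^{(N)}$; note the local-martingale class is preserved). The constraint $l_t\le\frac1N\sum_i h(\tilde Y^i_t)\le u_t$ follows because $h$ is increasing and $\tilde\psi^{(N)}_t\le\tilde S_t\le\tilde\phi^{(N)}_t$ combined with the defining equations $\frac1N\sum_i h(\tilde U^i_t+\tilde\psi^{(N)}_t)=l_t$, $\frac1N\sum_i h(\tilde U^i_t+\tilde\phi^{(N)}_t)=u_t$. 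The Skorokhod (flatness) conditions transfer verbatim: on $\{d\tilde K^{(N),+}>0\}$ we have $\tilde S_t=\tilde\psi^{(N)}_t$, hence $\frac1N\sum_i h(\tilde Y^i_t)=\frac1N\sum_i h(\tilde U^i_t+\tilde\psi^{(N)}_t)=l_t$, so $\int_0^T(\frac1N\sum_i h(\tilde Y^i_t)-l_t)d\tilde K^{(N),+}_t=0$, and symmetrically for $\tilde K^{(N),-}$. For uniqueness I would argue as in Theorem \ref{thm5.1}: if $(\tilde{\tilde Y}^i,\tilde{\tilde M}^{(N)},\tilde{\tilde K}^{(N)})$ is another solution, introduce the stopping time $\bar\tau=\inf\{u\ge t:\tilde{\tilde Y}^i_u=\tilde Y^i_u\}$ on the event $\bar A=\{\tilde{\tilde Y}^i_t>\tilde Y^i_t\}$; strict monotonicity of $h$ forces $u_s>\frac1N\sum_j h(\cdot)>l_s$ strictly on $[t,\bar\tau)$ on $\bar A$, so both reflections are frozen there, and taking conditional expectation of the difference over $[t,\bar\tau]$ (the local-martingale part is handled by localizing, using the $\mathcal S^2$ bounds to pass to the limit) yields $0>\E[(\tilde Y^i_t-\tilde{\tilde Y}^i_t)\mathbf 1_{\bar A}]\ge 0$, a contradiction; then uniqueness of the canonical semimartingale decomposition gives $\tilde M^{(N)}$ and $\tilde K^{(N)}$ unique.

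Finally, for the a priori estimate \eqref{eqn:thm3.1_estimate}: from $|\tilde Y^i_t|\le|\tilde U^i_t|+|\tilde S_t|$ and the Snell representation, $|\tilde S_t|\le \E[\sup_{s}|\tilde\psi^{(N)}_s|+\sup_s|\tilde\phi^{(N)}_s| \mid\mathcal F^{(N)}_t] + \E[|\Psi^{(N)}_T+\Phi^{(N)}_T|\mid\mathcal F^{(N)}_t]$, while $|\tilde U^i_t|\le\E[|\xi^i|+\int_0^T|f^i(s)|ds\mid\mathcal F^{(N)}_t]$; applying Doob's $L^2$ inequality, Hölder's inequality, the bounds \eqref{psinphin} of Lemma \ref{lempsiphi}, and the estimate \eqref{PsinPhin} on $\Psi^{(N)}_T,\Phi^{(N)}_T$, and using that the copies are identically distributed to replace $\xi^i,f^i$ by $\xi,f$, gives $\E[\sup_t|\tilde Y^i_t|^2]\le C(1+\E|\xi|^2+\E\int_0^T|f(t)|^2dt)$ with $C$ independent of $N$. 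The main obstacle I anticipate is not any single hard estimate but the bookkeeping around the local martingale: unlike the linear-reflection section there is no martingale representation producing an explicit $\bar Z\in\mathcal H^2$, so one must check that $\tilde M^{(N)}$ stays in $\mathcal M_{loc}$ (not $\mathcal H^2$) throughout the construction and that the uniqueness argument's expectation-taking step is legitimate after localization — this is exactly where Conditions (B1)–(B2), via Lemma \ref{lempsiphi}, do the work that linearity did before.
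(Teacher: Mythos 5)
Your proposal follows essentially the same route as the paper: the decomposition $\tilde Y^i_t=\tilde U^i_t+\tilde S_t$ with $\tilde S$ solving the auxiliary doubly reflected equation between the obstacles $\tilde\psi^{(N)},\tilde\phi^{(N)}$, the transfer of the constraint and Skorokhod conditions via monotonicity of $h$, the localized stopping-time contradiction for uniqueness, and the Snell-envelope representation plus Doob/H\"older and Lemma \ref{lempsiphi} for the estimate.

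The one point to fix is the tool you invoke for well-posedness of the $\tilde S$-equation. You cite \cite{cvitanic1996backward}, but their result rests on Mokobodzki's condition (a difference of supermartingales between the obstacles), which is what the paper verifies explicitly in the linear case via the processes $H^l,H^u$, and it delivers a Brownian martingale part in $\mathcal H^2$ and $K\in\mathcal A^2_{BV}$. In the nonlinear case none of this is available: what Lemma \ref{lempsiphi} actually provides is complete separation of the obstacles, $\inf_{t}(\tilde\phi^{(N)}_t-\tilde\psi^{(N)}_t)>0$, together with square integrability and the ordering at $T$ --- and that is precisely the hypothesis of Hamad\`ene and Hassani's theorem (Theorem 3.7 in \cite{hamadene2005bsdes}), which the paper uses and which yields only $\tilde M^{(N)}\in\mathcal M_{loc}$ and $\tilde K^{(N)}\in\mathcal A_{BV}$. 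As written, your existence step invokes a theorem whose hypotheses you have not checked; substituting the completely-separated-obstacles result closes the gap and everything else in your argument goes through. (A minor bookkeeping remark: in the first line of \eqref{eq7_2_s_tilde} the conditional expectation of $\xi^i+\int_t^T f^i(s)\,ds$ is kept explicitly, so $\widetilde M^{(N)}$ is exactly the local martingale of the $\tilde S$-equation; there is no need to absorb the martingale part of $\tilde U^i$ into it.)
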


\begin{proof}
We first prove the uniqueness. Suppose that $(\{\tilde{\tilde{Y}}^i\}_{1\leq i\leq N},\tilde{\tilde{M}}^{(N)},\tilde{\tilde{K}}^{(N)})$ is another solution to the IPS \eqref{eq7_2_s_tilde}. Without loss of generality, we assume that there exists a pair $(i,t)\in\{1,\cdots,N\}\times[0,T)$, such that $\P(\tilde{A})>0$, where $\tilde{A}:=\{\tilde{\tilde{Y}}^i_t>\tilde{Y}^i_t\}$. Then, there exists a constant $\varepsilon>0$ such that  $\P(\tilde{B})>0$, where $\tilde{B}:=\{\tilde{\tilde{Y}}^i_t\geq \tilde{Y}^i_t+\varepsilon\}\subset \tilde{A}$.  Define the stopping time $\tilde{\tau}$ as follows:
\begin{align*}
\tilde{\tau}:=\inf\Big\{s\geq t:\tilde{\tilde{Y}}^i_s=\tilde{Y}^i_s\Big\}.
\end{align*}
Then, on the set $\tilde{A}$, we have $t<\tilde{\tau}\leq T$ and $\tilde{\tilde{Y}}^i_s>\tilde{Y}^i_s$ for $s\in[t,\tilde{\tau})$. Since $h$ is strictly increasing, on the set $\tilde{A}$, for any $s\in[t,\tilde{\tau})$ we have
\begin{align*}
u_t\geq \frac{1}{N}\sum_{j=1}^Nh(\tilde{\tilde{Y}}^j_s)>\frac{1}{N}\sum_{j=1}^Nh(\tilde{Y}^j_s)\geq l_t, 
\end{align*}
It follows from the Skorokhod condition that $d\tilde{\tilde{K}}^{(N),+}_s=0$ and $d\tilde{K}^{(N),-}_s=0$ for $s\in[t,\tilde{\tau})$ on the set $\tilde{A}$. Let $\{\sigma_n\}_{n\geq 1}$ be a nondecreasing sequence of $\mathbb{F}^{(N)}$-stopping times with $\P(\lim_{n\rightarrow\infty}\sigma_n =T)=1$ such that $\{\tilde{M}^{(N)}_{t\wedge \sigma_n}\}_{t\in[0,T]}$ and  $\{\tilde{\tilde{M}}^{(N)}_{t\wedge \sigma_n}\}_{t\in[0,T]}$ are martingales. Without loss of generality, we assume that $\sigma_1\geq t$. Set $\tilde{\tau}_n=\tilde{\tau}\wedge \sigma_n$. It is clear that 
\begin{align*}
\tilde{Y}^i_t-\tilde{\tilde{Y}}^i_t=&\tilde{Y}^i_{\tilde{\tau}_n}-\tilde{\tilde{Y}}^i_{\tilde{\tau}_n}-(\tilde{M}^{(N)}_{\tilde{\tau}_n}-\tilde{M}^{(N)}_t)+(\tilde{\tilde{M}}^{(N)}_{\tilde{\tau}_n}-\tilde{\tilde{M}}^{(N)}_t)+(\tilde{K}^{(N),+}_{\tilde{\tau}_n}-\tilde{K}^{(N),+}_t)\\
&-(\tilde{K}^{(N),-}_{\tilde{\tau}_n}-\tilde{K}^{(N),-}_t)-(\tilde{\tilde{K}}^{(N),+}_{\tilde{\tau}_n}-\tilde{\tilde{K}}^{(N),+}_t)+(\tilde{\tilde{K}}^{(N),-}_{\tilde{\tau}_n}-\tilde{\tilde{K}}^{(N),-}_t).
\end{align*}
Recalling that $d\tilde{\tilde{K}}^{(N),+}_s=0$ and $d\tilde{K}^{(N),-}_s=0$ for $s\in[t,\tilde{\tau})$ on the set $\tilde{A}$ and $\tilde{B}$ is a subset of $\tilde{A}$, we have
\begin{align*}
(\tilde{Y}^i_t-\tilde{\tilde{Y}}^i_t)\mathbbm{1}_{\tilde{B}}=&(\tilde{Y}^i_{\tilde{\tau}_n}-\tilde{\tilde{Y}}^i_{\tilde{\tau}_n})\mathbbm{1}_{\tilde{B}}-(\tilde{M}^{(N)}_{\tilde{\tau}_n}-\tilde{M}^{(N)}_t)\mathbbm{1}_{\tilde{B}}+(\tilde{\tilde{M}}^{(N)}_{\tilde{\tau}_n}-\tilde{\tilde{M}}^{(N)}_t)\mathbbm{1}_{\tilde{B}}\\
&+(\tilde{K}^{(N),+}_{\tilde{\tau}_n}-\tilde{K}^{(N),+}_t)\mathbbm{1}_{\tilde{B}}+(\tilde{\tilde{K}}^{(N),-}_{\tilde{\tau}_n}-\tilde{\tilde{K}}^{(N),-}_t)\mathbbm{1}_{\tilde{B}}.
\end{align*}
Taking expectations on both sides, noting that $\tilde{B}\in\mathcal{F}^{(N)}_t$ and $\tilde{\tau}_1\geq t$, we have
\begin{align*}
-\varepsilon \P(\tilde{B})\geq \E\Big[(\tilde{Y}^i_t-\tilde{\tilde{Y}}^i_t)\mathbbm{1}_{\tilde{B}}\Big]\geq\E\Big[(\tilde{Y}^i_{\tilde{\tau}_n}-\tilde{\tilde{Y}}^i_{\tilde{\tau}_n})\mathbbm{1}_{\tilde{B}}\Big]\rightarrow 0,\quad \textrm{ as } n\rightarrow \infty,
\end{align*}
which is a contradiction. Hence, $\tilde{Y}^i\equiv\tilde{\tilde{Y}}^i$ for $1\leq i\leq N$. Since $\tilde{Y}^i$ is an $\mathbb{F}^{(N)}$-semimartingale, its decomposition is unique. Therefore, we have $\tilde{K}^{(N)}=\tilde{\tilde{K}}^{(N)}$ and $\tilde{M}^{(N)}=\tilde{\tilde{M}}^{(N)}$.

Now, we aim to show the existence. By Theorem 3.7 in \cite{hamadene2005bsdes} and Lemma \ref{lempsiphi}, the doubly RBSDE:
\begin{equation}\label{DRBSDE1}
\begin{cases}
\tilde{S}_t=\Psi^{(N)}_T+\Phi^{(N)}_T-(\tilde{M}^{(N)}_T-\tilde{M}^{(N)}_t)+(\tilde{K}^{(N)}_T-\tilde{K}^{(N)}_t),\vspace{0.2cm}\\
\tilde{\psi}^{(N)}_t\leq \tilde{S}_t\leq \tilde{\phi}^{(N)}_t, \quad t\in[0,T],\vspace{0.2cm}\\
\tilde{K}^{(N)}=\tilde{K}^{(N),+}-\tilde{K}^{(N),-}\quad\textrm{with}\quad \tilde{K}^{(N),+},\tilde{K}^{(N),-}\in \mathcal{A}(\mathbb{F}^{(N)};\mathbb{R}),\vspace{0.2cm}\\
\int_0^T(\tilde{S}_t-\tilde{\psi}^{(N)}_t)d\tilde{K}^{(N),+}_t=\int_0^T(\tilde{\phi}^{(N)}_t-\tilde{S}_t)d\tilde{K}^{(N),-}_t=0,
\end{cases}
\end{equation}
admits a unique solution $(\tilde{S},\tilde{M}^{(N)},\tilde{K}^{(N)})\in \mathcal{S}^2(\mathbb{F}^{(N)};\mathbb{R})\times \mathcal{M}_{loc}(\mathbb{F}^{(N)};\mathbb{R})\times\mathcal{A}_{BV}(\mathbb{F}^{(N)};\mathbb{R})$. 
For $1\leq i\leq N$, set
\begin{align*}
\tilde{Y}^i_t=\tilde{U}^{i}_t+\tilde{S}_t, \qquad t\in[0,T].
\end{align*}
Then, we have
\begin{align*}
\tilde{Y}^i_t=\E\Bigg[\xi^i+\int_t^T f^i(s) ds\,\Bigg|\,\mathcal{F}^{(N)}_t\Bigg]+(\Psi^{(N)}_T+\Phi^{(N)}_T)-(M_T^{(N)}-\tilde{M}^{(N)}_t)+(\tilde{K}^{(N)}_T-\tilde{K}^{(N)}_t).
\end{align*}
We claim that $(\{\tilde{Y}^i\}_{1\leq i\leq N},\tilde{M}^{N},\tilde{K}^{(N)})$ is a solution to the IPS \eqref{eq7_2_s_tilde}.

For the constraints, since $h$ is nondecreasing and noting the fact that $\tilde{\psi}^{(N)}_t\leq \tilde{S}_t\leq \tilde{\phi}^{(N)}_t$ for $t\in[0,T]$, it is easy to check that
\begin{align*}
l_t=\frac{1}{N}\sum_{i=1}^N h(\tilde{U}^{i}_t+\tilde{\psi}^{(N)}_t)&\leq \frac{1}{N}\sum_{i=1}^N h(\tilde{U}^{i}_t+\tilde{S}_t)\\
&=\frac{1}{N}\sum_{i=1}^N h(\tilde{Y}^i_t)\leq \frac{1}{N}\sum_{i=1}^N h(\tilde{U}^{i}_t+\tilde{\phi}^{(N)}_t)=u_t.
\end{align*}
Applying the Skorokhod condition in the doubly RBSDE \eqref{DRBSDE1} yields that
\begin{align*}
\int_0^T\Bigg(\frac{1}{N}\sum_{i=1}^N h(\tilde{Y}^i_t)-l_t\Bigg)d\tilde{K}^{(N),+}_t
=&\int_0^T \Bigg(\frac{1}{N}\sum_{i=1}^N h(\tilde{U}^{i}_t+\tilde{S}_t)-l_t\Bigg)d\tilde{K}^{(N),+}_t\\
=&\int_0^T \Bigg(\frac{1}{N}\sum_{i=1}^N h(\tilde{U}^{i}_t+\tilde{S}_t)-l_t\Bigg)\mathbbm{1}_{\{\tilde{S}_t=\tilde{\psi}^{(N)}_t\}}d\tilde{K}^{(N),+}_t\\
=&\int_0^T\Bigg(\frac{1}{N}\sum_{i=1}^N h(\tilde{U}^{i}_t+\tilde{\psi}^{(N)}_t)-l_t\Bigg)\mathbbm{1}_{\{\tilde{S}_t=\tilde{\psi}^{(N)}_t\}}d\tilde{K}^{(N),+}_t\\
=&0.
\end{align*}
Similarly, we have
\begin{align*}
\int_0^T\frac{1}{N}\Bigg(u_t-\sum_{i=1}^N h(\tilde{Y}^i_t)\Bigg)d\tilde{K}^{(N),-}_t=0.
\end{align*}
At last, by the construction for $\tilde{Y}^i$ and Theorem 3.8 in \cite{hamadene2005bsdes}, we have
\begin{align*}
|\tilde{Y}^i_t|\leq |\tilde{U}^{i}_t|+|\tilde{S}_t|\leq \E\Bigg[\Bigg|\xi^i+\int_t^T f^i(s) ds\Bigg|\,\Bigg|\,\mathcal{F}^{(N)}_t\Bigg]+\E\Bigg[\sup_{t\in[0,T]}|\tilde{\psi}^{(N)}_t|+\sup_{t\in[0,T]}|\tilde{\phi}^{(N)}_t|\,\Bigg|\,\mathcal{F}^{(N)}_t\Bigg].
\end{align*}
The estimate for $\tilde{Y}^i$ in equation \eqref{eqn:thm3.1_estimate} follows from Doob's inequality, H\"{o}lder's inequality and the estimates \eqref{psinphin}. 
\end{proof}

\subsection{The particle system with non-constant driver}
\label{sec:nonlinear_nonconstant}
In this subsection, we consider the general IPS \eqref{eq7_2_tilde}.

\begin{theorem}\label{thm4.15}
Suppose Conditions (B1) and (B2) hold. The IPS \eqref{eq7_2_tilde} has a unique solution $(\{\widetilde{Y}^i\}_{1\leq i\leq N},\widetilde{M}^{(N)},\widetilde{K}^{(N)})\in \mathcal{S}^2(\mathbb{F}^{(N)},\mathbb{R}^N)\times\mathcal{M}_{loc}(\mathbb{F}^{(N)};\mathbb{R})\times \mathcal{A}_{BV}(\mathbb{F}^{(N)};\mathbb{R})$. 
\end{theorem}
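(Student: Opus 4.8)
The plan is to replicate the structure of the proof of Theorem \ref{thm4.1}, with Theorem \ref{thm3.1} playing the role that Theorem \ref{thm5.1} played there, together with the stability estimates of Lemma \ref{lempsiphi}. For a given family $\widetilde V=(\widetilde V^i)_{1\le i\le N}\in\big(\mathcal S^2(\mathbb F^{(N)};\mathbb R)\big)^N$, the process $f^i(\cdot,\widetilde V^i_\cdot)$ satisfies $\E[\int_0^T|f^i(s,\widetilde V^i_s)|^2\,ds]<\infty$ by \eqref{eqn:Lipschitz} and $\widetilde V^i\in\mathcal S^2$; hence Theorem \ref{thm3.1} applies to the IPS \eqref{eq7_2_s_tilde} with the constant driver $f^i(s)$ replaced by $f^i(s,\widetilde V^i_s)$ and produces a unique triple, whose first coordinate we call $\Gamma(\widetilde V)^i:=\widetilde Y^i$. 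A family solves \eqref{eq7_2_tilde} if and only if it is a fixed point of $\Gamma$ (plugging $\widetilde V^i=\widetilde Y^i$ into the intermediate system), so it suffices to show that $\Gamma$ is a contraction on $\big(\mathcal S^2(\mathbb F^{(N)};\mathbb R)\big)^N$ equipped with the norm $\|\widetilde V\|^2:=\E[\tfrac1N\sum_{i=1}^N\sup_{t\in[0,T]}|\widetilde V^i_t|^2]$ when $T$ is small, and then to patch subintervals together as in Theorem \ref{thm4.1}.

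For the contraction estimate I would use the construction from the proof of Theorem \ref{thm3.1}: $\widetilde Y^i_t=\widetilde U^i_t+\widetilde S_t$, with $\widetilde U^i_t=\E[\xi^i+\int_t^T f^i(s,\widetilde V^i_s)\,ds\mid\mathcal F^{(N)}_t]$ and $\widetilde S$ the first component of the doubly reflected BSDE \eqref{DRBSDE1} associated with the obstacles $\widetilde\psi^{(N)},\widetilde\phi^{(N)}$ determined by $\frac1N\sum_i h(\widetilde U^i_t+\widetilde\psi^{(N)}_t)=l_t$ and $\frac1N\sum_i h(\widetilde U^i_t+\widetilde\phi^{(N)}_t)=u_t$. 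One first checks, exactly as in Lemma \ref{lempsiphi} (whose arguments use only (B1)--(B2) and $\widetilde U^i\in\mathcal S^2$, not the precise form of $\widetilde U^i$, and which give $\widetilde U^i_T=\xi^i$ at the terminal time), that $\widetilde\psi^{(N)},\widetilde\phi^{(N)}\in\mathcal S^2(\mathbb F^{(N)};\mathbb R)$, that $\inf_{t}(\widetilde\phi^{(N)}_t-\widetilde\psi^{(N)}_t)\ge\inf_t(u_t-l_t)/\gamma_u>0$, and that $\widetilde\psi^{(N)}_T\le\Psi^{(N)}_T+\Phi^{(N)}_T\le\widetilde\phi^{(N)}_T$, so Theorem \ref{thm3.1} and \cite{hamadene2005bsdes} apply at every iteration. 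For two inputs $\widetilde V,\widetilde V'$ with associated quantities $\widetilde U^i,\widetilde U'^i$, $\widetilde\psi^{(N)},\widetilde\psi'^{(N)}$, $\widetilde\phi^{(N)},\widetilde\phi'^{(N)}$, $\widetilde S,\widetilde S'$, I would combine: (i) $|\widetilde U^i_t-\widetilde U'^i_t|\le L\,\E[\int_t^T|\widetilde V^i_s-\widetilde V'^i_s|\,ds\mid\mathcal F^{(N)}_t]$ from \eqref{eqn:Lipschitz}; (ii) $|\widetilde\psi^{(N)}_t-\widetilde\psi'^{(N)}_t|\vee|\widetilde\phi^{(N)}_t-\widetilde\phi'^{(N)}_t|\le\frac{\gamma_u}{\gamma_l N}\sum_{j=1}^N|\widetilde U^j_t-\widetilde U'^j_t|$, which is precisely \eqref{eq4}; and (iii) since $\widetilde S,\widetilde S'$ solve doubly reflected BSDEs with the same terminal value $\Psi^{(N)}_T+\Phi^{(N)}_T$ and no driver, the Dynkin-game representation of the solution (Theorem~3.8 in \cite{hamadene2005bsdes}, as already invoked in Theorem \ref{thm3.1}) yields $|\widetilde S_t-\widetilde S'_t|\le\E[\sup_{s\in[t,T]}(|\widetilde\psi^{(N)}_s-\widetilde\psi'^{(N)}_s|+|\widetilde\phi^{(N)}_s-\widetilde\phi'^{(N)}_s|)\mid\mathcal F^{(N)}_t]$. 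Chaining (i)--(iii), applying Doob's and Hölder's inequalities, summing over $i$ and dividing by $N$ gives $\|\Gamma(\widetilde V)-\Gamma(\widetilde V')\|^2\le C L^2 T^2\big(1+(\gamma_u/\gamma_l)^2\big)\|\widetilde V-\widetilde V'\|^2$, so $\Gamma$ is a contraction once $CL^2T^2(1+(\gamma_u/\gamma_l)^2)\le 1/2$.

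On any horizon $T\le\varepsilon$ with $\varepsilon$ fixed by the last inequality, $\Gamma$ then has a unique fixed point $(\widetilde Y^i)_{1\le i\le N}$, and the companion $\widetilde M^{(N)},\widetilde K^{(N)}$ are read off from the solution of \eqref{DRBSDE1}; their uniqueness follows from the uniqueness of the special semimartingale decomposition of each $\widetilde Y^i$. For general $T$, I would fix $n$ with $T/n\le\varepsilon$, set $t_k=kT/n$, solve backwards on each $[t_{k-1},t_k]$ with terminal value the already-constructed $\widetilde Y^i_{t_k}$ (using $\Psi^{(N)}_T+\Phi^{(N)}_T$ only on the last interval) and driver $f^i$, and concatenate the pieces as in the proof of Theorem \ref{thm4.1}; uniqueness is inherited interval by interval. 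The main obstacle I anticipate is step (iii): extracting a clean obstacle-stability bound for the $\widetilde S$-component of a doubly reflected BSDE in the present low-regularity setting, where $\widetilde K^{(N)}$ is only of finite variation (not square integrable) and the barriers are merely continuous, so the $L^2$ a priori estimates exploited in Section \ref{sec:linear_reflection} are unavailable and one must lean on the Dynkin-game comparison of \cite{hamadene2005bsdes}; a secondary point needing care is the verification, at each iteration, that the intermediate obstacles built from $f^i(\cdot,\widetilde V^i_\cdot)$ remain in $\mathcal S^2$ and completely separated, which is a direct consequence of (B1)--(B2) and the bi-Lipschitz bounds of Lemma \ref{lempsiphi}.
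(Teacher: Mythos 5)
Your proposal is correct and follows essentially the same route as the paper: a Picard iteration built on Theorem \ref{thm3.1}, the decomposition $\widetilde Y^i=\widetilde U^i+\widetilde S$, the obstacle-stability bound \eqref{eq4}, the Dynkin-game representation of \cite{hamadene2005bsdes} for $|\widetilde S_t-\widetilde S'_t|$, and concatenation over subintervals, yielding the same contraction constant of the form $CL^2T^2(1+\gamma_u^2/\gamma_l^2)$. The only minor bookkeeping difference is in the patching step: since \eqref{eq7_2_tilde} writes the terminal data as a conditional expectation plus an additive $\mathcal F^{(N)}_T$-measurable term, the paper re-splits $\widetilde Y^{i,k+1}_{t_k}$ on each earlier interval into $\xi^{i,k}+(\Psi^{(N),k}_{t_k}+\Phi^{(N),k}_{t_k})$ rather than dropping the additive term entirely, but this does not affect the substance of your argument.
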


\begin{proof}
For any $1\leq i\leq N$ and any given $\widetilde{V}^{i,(1)}\in \mathcal{S}^2(\mathbb{F}^{(N)},\mathbb{R})$, by Theorem \ref{thm3.1}, the following IPS:
\begin{displaymath}
\begin{cases}
\widetilde{Y}^{i,(1)}_t=\E\Big[\xi^i+\int_t^T f^i(s,\widetilde{V}^{i,(1)}_s) ds\,\Big|\,\mathcal{F}^{(N)}_t\Big]+(\Psi^{(N)}_T+\Phi^{(N)}_T)-(\widetilde{M}^{(N),(1)}_T-\widetilde{M}^{(N),(1)}_t)\\
\hspace{9cm}+\widetilde{K}^{(N),(1)}_T-\widetilde{K}^{(N),(1)}_t,\vspace{0.2cm}\\
l_t\leq \frac{1}{N}\sum_{i=1}^N h(\widetilde{Y}^{i,(1)}_t)\leq u_t, \quad t\in[0,T],\vspace{0.2cm}\\
\widetilde{K}^{(N),(1)}=\widetilde{K}^{(N),(1),+}-\widetilde{K}^{(N),(1),-}\quad\textrm{with}\quad \widetilde{K}^{(N),(1),+},\widetilde{K}^{(N,(1)),-}\in \mathcal{A}(\mathbb{F}^{(N)};\mathbb{R}),\vspace{0.2cm}\\
\int_0^T\Big(\frac{1}{N}\sum_{i=1}^N h(\widetilde{Y}^{i,(1)}_t)-l_t\Big)d\widetilde{K}^{(N,(1)),+}_t=\int_0^T (u_t-\frac{1}{N}\sum_{i=1}^N h(\widetilde{Y}^{i,(1)}_t))d\widetilde{K}^{(N),(1),-}_t=0,
\end{cases}
\end{displaymath}
 admits a unique solution $(\{\widetilde{Y}^{i,(1)}\}_{1\leq i\leq N},\widetilde{M}^{(N),(1)},\widetilde{K}^{(N),(1)})$.
We define the mapping $\widetilde{\Gamma}:\mathcal{S}^2(\mathbb{F}^{(N)},\mathbb{R})\rightarrow\mathcal{S}^2(\mathbb{F}^{(N)},\mathbb{R})$ by
\begin{align*}
\widetilde{\Gamma}(\widetilde{V}^{i,(1)})=\widetilde{Y}^{i,(1)}.
\end{align*}
Let $\widetilde{V}^{i,(2)}\in \mathcal{S}^2(\mathbb{F}^{(N)},\mathbb{R})$ and $\widetilde{\Gamma}(\widetilde{V}^{i,(2)})=\widetilde{Y}^{i,(2)}$. Then, we define
\begin{align*}
&\widetilde{U}^{i,(1)}_t:=\E\Bigg[\xi^i+\int_t^T f^i(s,\widetilde{V}_s^{i,(1)})ds\,\Bigg|\,\mathcal{F}^{(N)}_t\Bigg]\\
\text{and}\qquad &\widetilde{U}^{i,(2)}_t:=\E\Bigg[\xi^i+\int_t^T f^i(s,\widetilde{V}_s^{i,(2)})ds\,\Bigg|\,\mathcal{F}^{(N)}_t\Bigg].
\end{align*}
Let $\widetilde{\psi}^{(N),(1)}_t$, $\widetilde{\phi}^{(N),(1)}_t$, $\widetilde{\psi}^{(N),(2)}_t$, and $\widetilde{\phi}^{(N),(2)}_t$ be the $\mathcal{F}^{(N)}_t$-measurable random variable satisfying
\begin{align*}
&\frac{1}{N}\sum_{i=1}^N h(\widetilde{U}^{i,(1)}_t+\widetilde{\psi}^{(N),(1)}_t)=l_t, \qquad\quad \frac{1}{N}\sum_{i=1}^N h(\widetilde{U}^{i,(1)}_t+\widetilde{\phi}^{(N),(1)}_t)=u_t,\\
&\frac{1}{N}\sum_{i=1}^N h(\widetilde{U}^{i,(2)}_t+\widetilde{\psi}^{(N),(2)}_t)=l_t, \qquad\quad \frac{1}{N}\sum_{i=1}^N h(\widetilde{U}^{i,(2)}_t+\widetilde{\phi}^{(N),(2)}_t)=u_t.
\end{align*}
Let $\widetilde{S}^{(1)}$ (resp. $\widetilde{S}^{(2)}$) be the first component of the solution to the doubly RBSDE \eqref{DRBSDE1} with terminal value $(\Psi^{(N)}_T+\Phi^{(N)}_T)$, no driver, lower obstacle $\widetilde{\psi}^{(N),(1)}$ (resp. $\widetilde{\psi}^{(N),(2)}$) and upper obstacle $\widetilde{\phi}^{(N),(1)}_t$ (resp. $\widetilde{\phi}^{(N),(2)}_t$). By the proof of Theorem \ref{thm3.1}  and the representation for $\widetilde{S}^{(1)}$, $\widetilde{S}^{(2)}$ (see Theorem 3.8 in \cite{hamadene2005bsdes}), we obtain that
\begin{align*}
|\widehat{Y}^i_t|&\leq |\widehat{U}^i_t|+|\widehat{S}_t|\\
&\leq LT\E\Bigg[\sup_{s\in[0,T]}|\widehat{V}^i_s|\,\Bigg|\,\mathcal{F}^{(N)}_t\Bigg]+\E\Bigg[\sup_{s\in[0,T]}|\widehat{\psi}^{(N)}_s|\,\Bigg|\,\mathcal{F}^{(N)}_s\Bigg]+\E\Bigg[\sup_{s\in[0,T]}|\widehat{\phi}^{(N)}_s|\,\Bigg|\,\mathcal{F}^{(N)}_s\Bigg],
\end{align*}
where $\widehat{\Xi}_t=\Xi_t^{(1)}-\Xi_t^{(2)}$ for $\Xi=\widetilde{Y}^i,\widetilde{U}^{i},\widetilde{V}^i, \widetilde{S},\widetilde{\psi}^{(N)},\widetilde{\phi}^{(N)}$. By the Lipschitz continuity of $f$ in equation \eqref{eqn:Lipschitz}, we have
\begin{align*}
\frac{1}{N}\sum_{j=1}^N|\widehat{U}^j_t|\leq LT\E\Bigg[\frac{1}{N}\sum_{i=1}^N\sup_{t\in[0,T]}|\widehat{V}^i_t|\,\Bigg|\,\mathcal{F}^{(N)}_t\Bigg].
\end{align*}
By a similar analysis as \eqref{eq4}, we obtain that
\begin{align*}
	|\widehat{\psi}^{(N)}_t|\leq \frac{\gamma_u}{\gamma_l N}\sum_{j=1}^N|\widehat{U}^j_t|\quad\text{and}\quad	|\widehat{\phi}^{(N)}_t|\leq \frac{\gamma_u}{\gamma_l N}\sum_{j=1}^N|\widehat{U}^j_t|.
\end{align*}
 Then applying Doob's inequality and H\"{o}lder's inequality,
\begin{align*}
\E\Bigg[\sup_{t\in[0,T]}|\widehat{Y}^i_t|^2\Bigg]&\leq 12L^2T^2\E\Bigg[\sup_{t\in[0,T]}|\widehat{V}^i_t|^2\Bigg]+12\E\Bigg[\sup_{t\in[0,T]}|\widehat{\psi}^{(N)}_t|^2\Bigg]+12\E\Bigg[\sup_{t\in[0,T]}|\widehat{\phi}^{(N)}_t|^2\Bigg]\\
&\leq 12L^2T^2\E\Bigg[\sup_{t\in[0,T]}|\widehat{V}^i_t|^2\Bigg]+24\frac{\gamma_u^2}{\gamma_l^2}\E\Bigg[\sup_{t\in[0,T]}\Bigg(\frac{1}{N}\sum_{j=1}^N|\widehat{U}^j_t|\Bigg)^2\Bigg]\\
&\leq 12L^2T^2\E\Bigg[\sup_{t\in[0,T]}|\widehat{V}^i_t|^2\Bigg]+96L^2T^2\frac{\gamma_u^2}{\gamma_l^2}\E\Bigg[\frac{1}{N}\sum_{i=1}^N\sup_{t\in[0,T]}|\widehat{V}^i_t|^2\Bigg].
\end{align*}
Summing these inequalities yields that
\begin{align*}
\E\Bigg[\frac{1}{N}\sum_{i=1}^N\sup_{t\in[0,T]}|\widehat{Y}^i_t|^2\Bigg]\leq 12L^2 T^2\Bigg(1+8\frac{\gamma_u^2}{\gamma_l^2}\Bigg)\E\Bigg[\frac{1}{N}\sum_{i=1}^N\sup_{t\in[0,T]}|\widehat{V}^i_t|^2\Bigg].
\end{align*}
Choosing $T\leq \varepsilon$ with $\varepsilon>0$ being such that 
$12L^2 \varepsilon^2(1+8\gamma_u^2/\gamma_l^2)\leq \frac{1}{2},
$
then $\widetilde{\Gamma}$ is a contraction mapping. Thus, $\widetilde{\Gamma}$ has a unique fixed point in $\mathcal{S}^2(\mathbb{F}^{(N)},\mathbb{R})$ when $T\leq \varepsilon$, i.e., there exists a unique $\{\widetilde{Y}^i\}_{1\leq i\leq N}$ solving the IPS \eqref{eq7_2_tilde} for some $(\widetilde{M}^{(N)},\widetilde{K}^{(N)})\in\mathcal{M}_{loc}(\mathbb{F}^{(N)};\mathbb{R})\times\mathcal{A}_{BV}(\mathbb{F}^{(N)};\mathbb{R})$ on small time interval $[0,T]$. Note that $\widetilde{Y}^i$ is an $\mathbb{F}^{(N)}$-semimartingale, its decomposition is unique, which gives the uniqueness of $(\widetilde{M}^{(N)},\widetilde{K}^{(N)})$.

For the general case, let $n$ be a positive integer such that $T/n<\varepsilon$. For any $k=0,1,\cdots,n$, set $t_k=\frac{kT}{n}$. Let  $(\{\widetilde{Y}^{i,n}\}_{1\leq i\leq N},\widetilde{M}^{(N),n}, \widetilde{K}^{(N),n})$ be the unique solution to the IPS \eqref{eq7_2_tilde} on time interval $[t_{n-1},T]$. We define
\begin{align*}
\xi^{i,n-1}&=\E\Bigg[\xi^i+\int_{t_{n-1}}^T f^i(s,\widetilde{Y}^{i,n}_s) ds\,\Bigg|\,\mathcal{F}^{(N)}_{t_{n-1}}\Bigg],\\
\Psi^{(N),n-1}_{t_{n-1}}+\Phi^{(N),n-1}_{t_{n-1}}&=\widetilde{Y}^{i,n}_{t_{n-1}}-\xi^{i,n-1}\\
&=(\Psi^{(N)}_T+\Phi^{(N)}_T)-(\widetilde{M}^{(N),n}_T-\widetilde{M}^{(N),n}_{t_{n-1}})+\widetilde{K}^{(N),n}_T-\widetilde{K}^{(N),n}_{t_{n-1}}.
\end{align*}
It is easy to check that $\xi^{i,n-1}$, $\Psi^{(N),n-1}_{t_{n-1}}+\Phi^{(N),n-1}_{t_{n-1}}$ are $\mathcal{F}^{(N)}_{t_{n-1}}$-measurable and square integrable. Therefore, the following RBSDE on time interval $[t_{n-2},t_{n-1}]$: for any $1\leq i\leq N$, 
\begin{displaymath}
\begin{cases}
\widetilde{Y}^{i,n-1}_t=\E\Bigg[\xi^{i,n-1}+\int_t^{t_{n-1}} f^i(s,\widetilde{Y}^{i,n-1}_s) ds\,\Bigg|\,\mathcal{F}^{(N)}_t\Bigg]+(\Psi^{(N),n-1}_{t_{n-1}}+\Phi^{(N),n-1}_{t_{n-1}})\vspace{0.2cm}\\
\hspace{2cm} -(\widetilde{M}^{(N),n-1}_{t_{n-1}}-\widetilde{M}^{(N),n-1}_t)+\widetilde{K}^{(N),n-1}_{t_{n-1}}-\widetilde{K}^{(N),n-1}_t, \vspace{0.2cm}\\
l_t\leq \frac{1}{N}\sum_{i=1}^N h(\widetilde{Y}^{i,n-1}_t)\leq u_t, \vspace{0.2cm}\\
\widetilde{K}^{(N),n-1}=\widetilde{K}^{(N),n-1,+}-\widetilde{K}^{(N),n-1,-}\quad\textrm{with}\quad  \widetilde{K}^{(N),n-1,+},\widetilde{K}^{(N),n-1,-}\in \mathcal{A}(\mathbb{F}^{(N)};\mathbb{R}),\vspace{0.2cm}\\
\int_{t_{n-2}}^{t_{n-1}}(\frac{1}{N}\sum_{i=1}^N h(\widetilde{Y}^{i,n-1}_t)-l_t)d\widetilde{K}^{(N),n-1,+}_t=0,\vspace{0.2cm}\\
\int_{t_{n-2}}^{t_{n-1}} (u_t-\frac{1}{N}\sum_{i=1}^N h(\widetilde{Y}^{i,n-1}_t))d\widetilde{K}^{(N),n-1,-}_t=0,
\end{cases}
\end{displaymath}
admits a unique solution denoted by $(\{\widetilde{Y}^{i,n-1}\}_{1\leq i\leq N},\widetilde{M}^{(N),n-1}, \widetilde{K}^{(N),n-1})$.
We may define $(\{\widetilde{Y}^{i,k}\}_{1\leq i\leq N},\widetilde{M}^{(N),k}, \widetilde{K}^{(N),k})$ for $k=n-2,\cdots,1$, similarly. Set
\begin{align*}%
&\widetilde{Y}^i_t=\widetilde{Y}^{i,k}_t, \qquad \widetilde{M}^{(N)}_t=\widetilde{M}^{(N),k}_t+\sum_{l<k}\widetilde{M}^{(N),l}_{t_l},\\
& \widetilde{K}^{(N)}_t=\widetilde{K}^{(N),k}_t+\sum_{l<k}\widetilde{K}^{(N),l}_{t_l}, \qquad t\in[t_{k-1},t_k], \qquad k=1,2,\cdots,n.
\end{align*}
Then $(\{\widetilde{Y}^i\}_{1\leq i\leq N},\widetilde{M}^{(N)}, \widetilde{K}^{(N)})$ is the solution to the IPS \eqref{eq7_2_tilde}. The uniqueness follows from that on each small time interval.
\end{proof}

\begin{proposition}\label{prop4.2}
Suppose Conditions (B1) and (B2) hold.  For $(\{\widetilde{Y}^i\}_{1\leq i\leq N},\widetilde{M}^{(N)},\widetilde{K}^{(N)})$ being the solution of the IPS \eqref{eq7_2_tilde}, there exists a constant $C$ independent of $N$, such that for all $1\leq i\leq N$,
\begin{align}\label{estimateyi5}
\E\Bigg[\sup_{t\in[0,T]}|\widetilde{Y}^i_t|^2\Bigg]
\leq& C\Bigg(1+\E|\xi|^2+\E\left[\int_0^T|f(t,0)|^2dt\right]\Bigg).
\end{align}
\end{proposition}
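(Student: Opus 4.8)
The plan is to mirror the proof of Proposition \ref{prop4.2'}, replacing the explicit formulae for the obstacles (which there exploited the linearity of $h$ through \eqref{phinpsin}) by the bi-Lipschitz estimates of Lemma \ref{lempsiphi}, and keeping in mind that here $\widetilde{M}^{(N)},\widetilde{K}^{(N)}$ need not be square integrable, so only the bound on $\widetilde{Y}^i$ is sought. Following the constructions in the proofs of Theorem \ref{thm3.1} and Theorem \ref{thm4.15}, set $\widetilde{U}^i_t:=\E[\xi^i+\int_t^T f^i(s,\widetilde{Y}^i_s)\,ds\mid\mathcal{F}^{(N)}_t]$ and let $\widetilde{\psi}^{(N)},\widetilde{\phi}^{(N)}$ be the $\mathcal{F}^{(N)}_t$-measurable processes determined by $\frac1N\sum_{i=1}^N h(\widetilde{U}^i_t+\widetilde{\psi}^{(N)}_t)=l_t$ and $\frac1N\sum_{i=1}^N h(\widetilde{U}^i_t+\widetilde{\phi}^{(N)}_t)=u_t$; then $\widetilde{Y}^i_t=\widetilde{U}^i_t+\widetilde{S}_t$, where $\widetilde{S}$ is the first component of the doubly RBSDE \eqref{DRBSDE1} with terminal value $\Psi^{(N)}_T+\Phi^{(N)}_T$, no driver, and obstacles $\widetilde{\psi}^{(N)}\le\widetilde{S}\le\widetilde{\phi}^{(N)}$ (well-posed by Lemma \ref{lempsiphi} and Theorem 3.7 in \cite{hamadene2005bsdes}).

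The first genuine step is to estimate the obstacles. Running the bi-Lipschitz argument behind \eqref{eq4}--\eqref{psiN} verbatim yields, for every $t$, $\max(|\widetilde{\psi}^{(N)}_t|,|\widetilde{\phi}^{(N)}_t|)\le |x^l_t|\vee|x^u_t|+\tfrac{\gamma_u}{\gamma_l N}\sum_{j=1}^N|\widetilde{U}^j_t|$, where $h(x^l_t)=l_t$ and $h(x^u_t)=u_t$; note $\sup_t(|x^l_t|\vee|x^u_t|)<\infty$ by (B1) and (B2). Since $|\widetilde{U}^j_t|\le\E[|\xi^j|+\int_0^T|f^j(s,0)|\,ds+L\int_t^T|\widetilde{Y}^j_s|\,ds\mid\mathcal{F}^{(N)}_t]$ by \eqref{eqn:Lipschitz}, Doob's and H\"older's inequalities give
\[
\E\Big[\sup_{s\in[t,T]}\big(|\widetilde{\psi}^{(N)}_s|^2\vee|\widetilde{\phi}^{(N)}_s|^2\big)\Big]\le C\Big(1+\E|\xi|^2+\E\Big[\int_0^T|f(s,0)|^2ds\Big]+\E\Big[\int_t^T\tfrac1N\sum_{j=1}^N|\widetilde{Y}^j_s|^2ds\Big]\Big).
\]

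Next, $\widetilde{\psi}^{(N)}_t\le\widetilde{S}_t\le\widetilde{\phi}^{(N)}_t$ gives $|\widetilde{S}_t|\le\sup_{s}|\widetilde{\psi}^{(N)}_s|+\sup_s|\widetilde{\phi}^{(N)}_s|$, so $|\widetilde{Y}^i_t|\le\E[|\xi^i|+\int_0^T|f^i(s,0)|ds+L\int_t^T|\widetilde{Y}^i_s|ds\mid\mathcal{F}^{(N)}_t]+\sup_s|\widetilde{\psi}^{(N)}_s|+\sup_s|\widetilde{\phi}^{(N)}_s|$. Squaring, taking expectations, and using Doob's and H\"older's inequalities together with the obstacle estimate, I get for each $t$ and $i$
\[
\E|\widetilde{Y}^i_t|^2\le C\Big(1+\E|\xi|^2+\E\Big[\int_0^T|f(s,0)|^2ds\Big]+\E\Big[\int_t^T\tfrac1N\sum_{j=1}^N|\widetilde{Y}^j_s|^2ds\Big]+\E\Big[\int_t^T|\widetilde{Y}^i_s|^2ds\Big]\Big).
\]
Averaging over $i$ and invoking Gr\"onwall's inequality bounds $\E[\tfrac1N\sum_{i=1}^N|\widetilde{Y}^i_t|^2]$ by the right-hand side of \eqref{estimateyi5} uniformly in $t$; substituting this back and applying Gr\"onwall once more bounds $\E|\widetilde{Y}^i_t|^2$ for each $i$; finally, taking $\sup_{t\in[0,T]}$ in the pointwise bound on $|\widetilde{Y}^i_t|$ and then expectations, Doob's and H\"older's inequalities deliver the claim.

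The step I expect to be the main obstacle is the obstacle estimate together with the ensuing bootstrap: without linearity of $h$ there is no closed form for $\widetilde{\psi}^{(N)},\widetilde{\phi}^{(N)}$, and Lemma \ref{lempsiphi} controls them only through the average $\tfrac1N\sum_j|\widetilde{U}^j_t|$, so Gr\"onwall must be applied first to the averaged quantity $\tfrac1N\sum_j|\widetilde{Y}^j_t|^2$ and only then transferred to single particles. A secondary subtlety is that, since $\widetilde{M}^{(N)}\in\mathcal{M}_{loc}$ and $\widetilde{K}^{(N)}\in\mathcal{A}_{BV}$ rather than their $L^2$-versions, one cannot apply It\^o's formula to $|\widetilde{Y}^i_t|^2$ as in the linear case (this would produce an uncontrolled $\int\widetilde{Y}^i_s\,d\widetilde{K}^{(N)}_s$ term); the whole argument must instead run through the decomposition $\widetilde{Y}^i=\widetilde{U}^i+\widetilde{S}$ and the two-sided obstacle bound on $\widetilde{S}$.
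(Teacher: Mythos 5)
Your proposal is correct and follows essentially the same route as the paper's own proof: the decomposition $\widetilde{Y}^i=\widetilde{U}^i+\widetilde{S}$ with $\widetilde{S}$ solving the doubly RBSDE \eqref{eqn:tildetildeS}, the bi-Lipschitz obstacle bound $|\widetilde{\psi}^{(N)}_t|\vee|\widetilde{\phi}^{(N)}_t|\le |x^l_t|\vee|x^u_t|+\frac{\gamma_u}{\gamma_l N}\sum_j|\widetilde{U}^j_t|$ in place of the linear formula \eqref{phinpsin}, and the two-stage Gr\"onwall argument (first on the particle average, then on each individual particle) are exactly the steps in the paper. Your closing remarks on why It\^o's formula is unavailable here and why the averaged quantity must be handled first also match the paper's reasoning.
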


\begin{proof} 
Set 
\begin{align}
	\label{eqn:tildetildeU}
  \widetilde{U}^{i}_t=\E\Bigg[\xi^i+\int_t^T f^i(s,\widetilde{Y}^{i}_s)ds\,\Bigg|\,\mathcal{F}^{(N)}_t\Bigg].
\end{align}
Let $  \widetilde{\psi}^{(N)}_t$ and $  \widetilde{\phi}^{(N)}_t$ be the $\mathcal{F}^{(N)}_t$-measurable random variables satisfying respectively
\begin{align*}
\frac{1}{N}\sum_{i=1}^N h(  \widetilde{U}^{i}_t+  \widetilde{\psi}^{(N)}_t)=l_t\quad\text{and}\quad \frac{1}{N}\sum_{i=1}^N h(  \widetilde{U}^{i}_t+  \widetilde{\phi}^{(N)}_t)=u_t.
\end{align*}
Consider $  \widetilde{S}$ as the first component of the solution to the doubly RBSDE as follows:
\begin{equation}\label{eqn:tildetildeS}
	\begin{cases}
		\widetilde{S}_t=\Psi^{(N)}_T+\Phi^{(N)}_T-(\widetilde{M}^{(N)}_T-\widetilde{M}^{(N)}_t)+(\widetilde{K}^{(N)}_T-\widetilde{K}^{(N)}_t),\vspace{0.2cm}\\
		\widetilde{\psi}^{(N)}_t\leq \widetilde{S}_t\leq \widetilde{\phi}^{(N)}_t, \quad t\in[0,T],\vspace{0.2cm}\\
		\widetilde{K}^{(N)}=\widetilde{K}^{(N),+}-\widetilde{K}^{(N),-}\quad\textrm{with}\quad \widetilde{K}^{(N),+},\widetilde{K}^{(N),-}\in \mathcal{A}(\mathbb{F}^{(N)};\mathbb{R}),\vspace{0.2cm}\\
		\int_0^T(\widetilde{S}_t-\widetilde{\psi}^{(N)}_t)d\widetilde{K}^{(N),+}_t=\int_0^T(\widetilde{\phi}^{(N)}_t-\widetilde{S}_t)d\widetilde{K}^{(N),-}_t=0.
	\end{cases}
\end{equation}
By the proof of Theorem \ref{thm3.1}  and the representation for $\widetilde{S}$ (see Theorem 3.8 in \cite{hamadene2005bsdes}), we have
\begin{align*}
\widetilde{Y}^i_t=  \widetilde{U}^{i}_t+  \widetilde{S}_t=  \widetilde{U}^{i}_t+\essinf_{\sigma\in\mathcal{T}^{(N)}_{t,T}}\esssup_{\tau\in\mathcal{T}^{(N)}_{t,T}}\E\Big[  \widetilde{R}^{N}(\sigma,\tau)\,\Big|\,\mathcal{F}^{(N)}_t\Big],
\end{align*}
where
\begin{align*}
  \widetilde{R}^{N}(\sigma,\tau):=(\Psi^{(N)}_T+\Phi^{(N)}_T)\mathbbm{1}_{\{\sigma\wedge \tau=T\}}+  \widetilde{\psi}^{(N)}_\tau \mathbbm{1}_{\{\tau<T,\tau\leq \sigma\}}+  \widetilde{\phi}^{(N)}_\sigma \mathbbm{1}_{\{\sigma<\tau\}}.
\end{align*}
Recalling that $  \widetilde{\psi}^{(N)}_T\leq \Psi^{(N)}_T+\Phi^{(N)}_T\leq   \widetilde{\phi}^{(N)}_T$, for any $t\in[0,T]$, 
\begin{align}\label{yit5}
|\widetilde{Y}^i_t|\leq &\E\Bigg[|\xi^i|+\int_t^T |f^i(s,0)|ds+L\int_t^T|\widetilde{Y}^i_s|ds\,\Bigg|\,\mathcal{F}^{(N)}_t\Bigg]\\
&+\E\Bigg[\sup_{s\in[t,T]}|  \widetilde{\psi}^{(N)}_s|+\sup_{s\in[t,T]}|  \widetilde{\phi}^{(N)}_s|\,\Bigg|\,\mathcal{F}^{(N)}_t\Bigg].\nonumber
\end{align}
By a similar analysis as the proof of equation \eqref{psiN}, we obtain that
\begin{align*}
|  \widetilde{\psi}^{(N)}_t|\leq |x^l_t|+\frac{\gamma_u}{\gamma_l N}\sum_{j=1}^N|  \widetilde{U}^{j}_t|\quad\text{and}\quad |  \widetilde{\phi}^{(N)}_t|\leq |x^u_t|+\frac{\gamma_u}{\gamma_l N}\sum_{j=1}^N|  \widetilde{U}^{j}_t|,
\end{align*}
where $x^l_t$ and $x^u_t$ satisfy respectively $h(x^l_t)=l_t$ and $h(x^u_t)=u_t$. Then there exists a constant $C$ independent of $N$, such that 
\begin{equation}\label{barpsin5}\begin{split}
&\E\Bigg[\sup_{s\in[t,T]}|  \widetilde{\psi}^{(N)}_s|^2\Bigg]\leq C\Bigg(1+\E\Bigg[|\xi|^2+\int_0^T |f(s,0)|^2ds\Bigg]+\E\Bigg[\int_t^T\frac{1}{N}\sum_{j=1}^N|\widetilde{Y}^j_s|^2ds\Bigg]\Bigg),\\
&\E\Bigg[\sup_{s\in[t,T]}|  \widetilde{\phi}^{(N)}_s|^2\Bigg]\leq C\Bigg(1+\E\Bigg[|\xi|^2+\int_0^T |f(s,0)|^2ds\Bigg]+\E\Bigg[\int_t^T\frac{1}{N}\sum_{j=1}^N|\widetilde{Y}^j_s|^2ds\Bigg]\Bigg).
\end{split}\end{equation}
Applying Doob's inequality and H\"{o}lder's inequality, by equations \eqref{yit5} and \eqref{barpsin5}, we have
\begin{align}\label{yit'5}
\E|\widetilde{Y}^i_t|^2\leq  C\Bigg(1+\E\Bigg[|\xi|^2+\int_0^T |f(s,0)|^2ds\Bigg]+\E\Bigg[\int_t^T\frac{1}{N}\sum_{j=1}^N|\widetilde{Y}^j_s|^2ds\Bigg]+\E\Bigg[\int_t^T |\widetilde{Y}^i_s|^2ds\Bigg]\Bigg).
\end{align}
Summing over $i$ yields that
\begin{align*}
\E\Bigg[\frac{1}{N}\sum_{i=1}^N|\widetilde{Y}^i_t|^2\Bigg]\leq C\Bigg(1+\E\Bigg[|\xi|^2+\int_0^T |f(s,0)|^2ds\Bigg]+\int_t^T\E\Bigg[\frac{1}{N}\sum_{j=1}^N|\widetilde{Y}^j_s|^2\Bigg]ds\Bigg).
\end{align*}
It follows from Gr\"onwall's inequality that 
\begin{align*}
\E\Bigg[\frac{1}{N}\sum_{i=1}^N|\widetilde{Y}^i_t|^2\Bigg]\leq C\Bigg(1+\E\Bigg[|\xi|^2+\int_0^T |f(s,0)|^2ds\Bigg]\Bigg).
\end{align*}
Plugging the above inequality into equations \eqref{barpsin5} and \eqref{yit'5}, using Gr\"onwall's inequality again, we obtain that 
\begin{equation}\label{barpsinyit5}\begin{split}
\E\Bigg[\sup_{s\in[0,T]}|  \widetilde{\psi}^{(N)}_s|^2\Bigg]&\leq C\Bigg(1+\E\Bigg[|\xi|^2+\int_0^T |f(s,0)|^2ds\Bigg]\Bigg),\\
\E\Bigg[\sup_{s\in[0,T]}|  \widetilde{\phi}^{(N)}_s|^2\Bigg]&\leq C\Bigg(1+\E\Bigg[|\xi|^2+\int_0^T |f(s,0)|^2ds\Bigg]\Bigg),\\
\E|\widetilde{Y}^i_t|^2&\leq C\Bigg(1+\E\Bigg[|\xi|^2+\int_0^T |f(s,0)|^2ds\Bigg]\Bigg).
\end{split}\end{equation}
Finally, combining equations \eqref{yit5} and \eqref{barpsinyit5}, applying Doob's inequality and H\"{o}lder's inequality, we obtain the desired result.
\end{proof}

\subsection{Approximation}
\label{sec:nonlinear_Approximation}
In this subsection, we use the IPS \eqref{eq7_2_tilde} to approximate the doubly MRBSDE \eqref{nonlinearyz} and establish the rate of convergence. 
Let $(Y^i,  M^i,K)$  be the solution to the following doubly MRBSDE:
\begin{equation}\label{nonlinearyz_approx}
	\begin{cases}
		Y^i_t=\xi^i+\int_t^T f^i(s,  Y^i_s)ds-(M^i_T-M^i_t)+K_T-K_t, \quad \E[h(Y^i_t)]\in [l_t,u_t],\quad  t\in[0,T], \vspace{0.2cm}\\
		K=K^+ -K^- \quad\textrm{with}\quad  K^+,K^-\in \mathcal{I}([0,T];\,\mathbb{R}),\vspace{0.2cm}\\
		\int_0^T (\E[h(  Y^i_t)]-l_t)dK_t^+=\int_0^T (u_t-\E[h(  Y^i_t)])dK^-_t=0.
	\end{cases}
\end{equation}
Then $(Y^i,  M^i,K)$ for $1\leq i\leq N$ are independent copies of $(Y,M,K)$, which is the unique solution to the doubly MRBSDE \eqref{nonlinearyz}. For $U^i_t$, $\Psi_T$, $\Phi_T$, $\psi_T$ and $\phi_T$ defined in Section \ref{sec:linear_Approximation}, we recall here as follows:
\begin{align*}	  
	&U^i_t=\E\Bigg[\xi^i+\int_t^Tf^i(s,  Y^i_s)ds \,\Bigg|\,\mathcal{F}^i_t\Bigg]=\E\Bigg[\xi^i+\int_t^Tf^i(s,  Y^i_s)ds \,\Bigg|\,\mathcal{F}^{(N)}_t\Bigg],\\
&\Psi_T=\inf\Big\{x\geq 0;\,\E[h(\xi^i+x)]\geq l_T\Big\},\quad\quad
	\Phi_T=\sup\Big\{x\leq 0;\,\E[h(\xi^i+x)]\leq u_T\Big\},
\end{align*}
and $\{  \psi_t\}_{t\in[0,T]}$ and $\{  \phi_t\}_{t\in[0,T]}$ satisfy respectively
\begin{align*}
	\E[h(  U^i_t+  \psi_t)]=l_t\quad\text{and}\quad \E[h(  U^i_t+  \phi_t)]=u_t.
\end{align*}

\begin{theorem}\label{thm4.35}
Let Conditions (B1) and (B2) hold. For $(Y^i,  M^i,K)$ being an
independent copy of the solution to the doubly MRBSDE \eqref{nonlinearyz} and $(\widetilde{Y}^i,\widetilde{M}^{(N)},\widetilde{K}^{(N)})$ solving the IPS \eqref{eq7_2_tilde}, the following results hold:
\begin{enumerate}[label=(\roman*)]
	\item 
	If $\sup_{t\in[0,T]}\E|  \widetilde{Z}^1_t|^4<\infty$ and $h$ is twice continuously differentiable with bounded derivatives, then there exists a constant $C$ independent of $N$, such that
	\begin{align*}
		\E\Bigg[\sup_{t\in[0,T]}|\widetilde{Y}^i_t -  Y^i_t|^2\Bigg]\leq CN^{-1}.
	\end{align*}
	
	\item 
	If $\xi$ is $p$-integrable, $f(\cdot,0)$ is progressively measurable with $\E[\int_0^T|f(t,0)|^pdt]<\infty$ and $\sup_{t\in[0,T]}\E|  \widetilde{Z}^1_t|^p<\infty$ for some $p>4$, then there exists a constant $C$ independent of $N$, such that
	\begin{align*}
		\E\Bigg[\sup_{t\in[0,T]}|\widetilde{Y}^i_t -  Y^i_t|^2\Bigg]\leq CN^{-1/2}.
	\end{align*}
\end{enumerate}
\end{theorem}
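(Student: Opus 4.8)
The plan is to follow Step~2 of the proof of Theorem~\ref{thm4.3}, transplanted to the nonlinear‑reflection representation, and to reduce both assertions to a single bound on
\[
\widetilde J^{(N)}:=\E\Big[\sup_{s\in[0,T]}|\psi_s-\psi^{(N)}_s|^2\Big]+\E\Big[\sup_{s\in[0,T]}|\phi_s-\phi^{(N)}_s|^2\Big],
\]
where $\psi^{(N)},\phi^{(N)}$ are as in \eqref{beforeconvergepsi}; only the decay rate of $\widetilde J^{(N)}$ distinguishes (i) from (ii). First I would set $\widetilde U^i_t:=\E[\xi^i+\int_t^Tf^i(s,\widetilde Y^i_s)\,ds\mid\mathcal F^{(N)}_t]$ and introduce the $\mathcal F^{(N)}_t$‑measurable processes $\widetilde\psi^{(N)},\widetilde\phi^{(N)}$ determined by $\frac1N\sum_j h(\widetilde\psi^{(N)}_t+\widetilde U^j_t)=l_t$ and $\frac1N\sum_j h(\widetilde\phi^{(N)}_t+\widetilde U^j_t)=u_t$. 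As in the proofs of Theorem~\ref{thm4.15} and Proposition~\ref{prop4.2} (via Theorem~3.8 of \cite{hamadene2005bsdes}), $\widetilde Y^i_t=\widetilde U^i_t+\widetilde S_t$ with $\widetilde S$ the first component of the doubly RBSDE with obstacles $\widetilde\psi^{(N)},\widetilde\phi^{(N)}$ and terminal value $\Psi^{(N)}_T+\Phi^{(N)}_T$, so $\widetilde Y^i$ has a Dynkin‑game representation; similarly $Y^i_t=U^i_t+(K_T-K_t)$, which by Theorem~3.6 of \cite{falkowski2022backward} (cf.\ \eqref{baryi}) has the Dynkin‑game representation with obstacles $\psi,\phi$. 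Since \eqref{psi+phi-}, \eqref{Psi+Phi-} and hence \eqref{pp} rest only on the definitions of $\Psi^{(N)}_T,\Phi^{(N)}_T,\psi^{(N)},\phi^{(N)}$, which are unchanged here, we still have $|\Psi_T-\Psi^{(N)}_T|\le|\psi_T-\psi^{(N)}_T|$ and $|\Phi_T-\Phi^{(N)}_T|\le|\phi_T-\phi^{(N)}_T|$.

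Comparing the two representations, and using that a difference of $\essinf\,\esssup$ expressions is dominated by the $\esssup$ of the differences, I would derive for $t\in[r,T]$ the pointwise bound $|\widetilde Y^i_t-Y^i_t|\le|\widetilde U^i_t-U^i_t|+\E[\Delta^{(N)}_t\mid\mathcal F^{(N)}_t]$, where $\Delta^{(N)}_t:=\sup_{s\in[t,T]}(|\widetilde\psi^{(N)}_s-\psi^{(N)}_s|+|\widetilde\phi^{(N)}_s-\phi^{(N)}_s|)+2\sup_{s\in[0,T]}(|\psi_s-\psi^{(N)}_s|+|\phi_s-\phi^{(N)}_s|)$. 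Here the Lipschitz bound \eqref{eqn:Lipschitz} gives $|\widetilde U^i_t-U^i_t|\le L\,\E[\int_r^T|\widetilde Y^i_s-Y^i_s|\,ds\mid\mathcal F^{(N)}_t]$, and the bi‑Lipschitz argument behind \eqref{eq4}, applied to $\{\widetilde U^j\}$ and $\{U^j\}$, gives $|\widetilde\psi^{(N)}_t-\psi^{(N)}_t|\vee|\widetilde\phi^{(N)}_t-\phi^{(N)}_t|\le\frac{\gamma_u}{\gamma_l N}\sum_j|\widetilde U^j_t-U^j_t|\le\frac{\gamma_uL}{\gamma_l}\E[\frac1N\sum_j\int_r^T|\widetilde Y^j_u-Y^j_u|\,du\mid\mathcal F^{(N)}_t]$. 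Applying Doob's and H\"older's inequalities, summing over $i$, and then Gr\"onwall's inequality in $r$ yields $\E[\frac1N\sum_i\sup_{t\in[0,T]}|\widetilde Y^i_t-Y^i_t|^2]\le C\widetilde J^{(N)}$, and feeding this back into the per‑$i$ inequality gives $\E[\sup_{t\in[0,T]}|\widetilde Y^i_t-Y^i_t|^2]\le C\widetilde J^{(N)}$, with $C=C(\gamma_l,\gamma_u,L,T)$; the a priori second moments used here come from Proposition~\ref{prop4.2} and from the well‑posedness of \eqref{nonlinearyz_approx}. For part~(i), the added hypotheses — $h\in C^2$ with bounded derivatives and $\sup_t\E|\widetilde Z^1_t|^4<\infty$ — are exactly those of Lemma~\ref{convergepsi} (with $\widetilde Z^1$ playing the role of $Z^1$), so $\widetilde J^{(N)}\le CN^{-1}$ and (i) follows.

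For part~(ii), $h$ is only bi‑Lipschitz, so the It\^o‑formula argument of Lemma~\ref{convergepsi} is no longer available and $\widetilde J^{(N)}$ must be estimated from the Lipschitz property alone, which is what forces the slower rate. By Lemma~2.1 of \cite{Briand2020Particles}, $|\psi_t-\psi^{(N)}_t|\le\gamma_l^{-1}|H(\psi_t,\nu_t)-H(\psi_t,\nu^{(N)}_t)|$ with $H(x,\mu)=\int h(x+y)\,\mu(dy)$, $\nu_t$ the common law of the mutually independent variables $\{U^i_t\}_i$ and $\nu^{(N)}_t=\frac1N\sum_i\delta_{U^i_t}$, so it is enough to bound $\E[\sup_t|G^N_t|^2]$ and its $\phi$‑analogue, where $G^N_t:=\frac1N\sum_{i=1}^N(h(\psi_t+U^i_t)-\E[h(\psi_t+U^i_t)])$. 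For each fixed $t$ the summands are independent and centered, so a Marcinkiewicz--Zygmund/Rosenthal estimate gives $\E|G^N_t|^q\le C_qN^{-q/2}\,\E\big|h(\psi_t+U^1_t)-\E[h(\psi_t+U^1_t)]\big|^q$ for $2<q\le p$; the $p$‑integrability of $\xi$, of $f(\cdot,0)$ and of $\widetilde Z^1$, together with the (local) Lipschitz continuity of $t\mapsto\psi_t$ and $t\mapsto\phi_t$ used already in Lemma~\ref{convergepsi}, bounds $\E[\sup_t|U^1_t|^p]$ — write $U^1$ as a martingale minus a bounded‑variation term and apply the Burkholder--Davis--Gundy inequality — hence makes those $q$‑th moments bounded uniformly in $t$. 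I would then discretize $[0,T]$, control the maximum over the grid by a maximal inequality (it is here that the margin $p>4$ is used, so that a grid fine enough for the target rate is affordable), and control the oscillation between grid points via $|G^N_t-G^N_s|\le2\gamma_u(|\psi_t-\psi_s|+\frac1N\sum_i|U^i_t-U^i_s|)$, bounding the $\psi$‑term by local Lipschitz continuity and the $U$‑term through $U^i_t-U^i_s=\int_s^t\widetilde Z^i_u\,dB^i_u-\int_s^tf^i(u,Y^i_u)\,du$ and H\"older/BDG moment bounds on the stochastic‑integral increments — the kind of estimate already carried out in \cite{Briand2020Particles,briand2021particles}. Adding the grid and oscillation contributions yields $\widetilde J^{(N)}\le CN^{-1/2}$, and the reduction above then gives (ii).

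The reduction and the Gr\"onwall closing are routine once the Dynkin‑game representations are in place, and (i) is immediate from Lemma~\ref{convergepsi}; the main obstacle is the uniform‑in‑time fluctuation estimate $\E[\sup_t|G^N_t|^2]\le CN^{-1/2}$ in (ii) without the $C^2$ structure of $h$, where one must check that the $p$‑th moment and time‑regularity estimates for $U^1$ — and hence the admissible grid mesh — are compatible with the hypothesis $p>4$.
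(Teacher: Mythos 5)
Your proposal is correct, and for the reduction step and for part (i) it is essentially the paper's argument: the same Dynkin-game representations for $\widetilde Y^i$ and $Y^i$, the same use of \eqref{pp} and of the bi-Lipschitz estimate \eqref{eq4} applied to $\{\widetilde U^j\}$ and $\{U^j\}$, the same Doob/H\"older/Gr\"onwall closing leading to $\E[\sup_t|\widetilde Y^i_t-Y^i_t|^2]\le C\,\widetilde J^{(N)}$, and then Lemma \ref{convergepsi} for the rate $N^{-1}$. Where you genuinely diverge is the final fluctuation estimate in part (ii). The paper gets $\widetilde J^{(N)}\le CN^{-1/2}$ by combining \eqref{eq10} with Lemma 2.1 and equation (2.6) of \cite{Briand2020Particles}, which reduce $|\psi_t-\psi^{(N)}_t|$ to $\gamma_l^{-1}\gamma_u\,W_1(\nu_t,\nu^{(N)}_t)$, and then cites Theorem 10.2.7 of \cite{rachev2006mass} together with the empirical-Wasserstein rates of \cite{Briand2020Particles} to control $\E[\sup_t W_1(\nu_t,\nu^{(N)}_t)^2]$; you instead bound $\E[\sup_t|G^N_t|^2]$ directly via Rosenthal's inequality at fixed times, a union bound over a grid of mesh $T/M$, and an oscillation estimate from the Lipschitz property of $h$, the local Lipschitz continuity of $\psi$, and BDG bounds on the increments of the martingale part of $U^1$. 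Your bookkeeping of the exponents is sound: the grid maximum forces $M\lesssim N^{p/4}$ while the oscillation forces $M\gtrsim N^{p/(2(p-2))}$, and these are compatible precisely when $p>4$, which is the hypothesis of (ii). The trade-off is that the paper's route is shorter because it delegates the uniform-in-time empirical fluctuation bound to cited results, whereas yours is self-contained and makes explicit where the threshold $p>4$ enters; both use only the bi-Lipschitz property of $h$, as required since (ii) does not assume $h\in C^2$.
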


\begin{proof}
By the proof of Proposition \ref{prop4.2} and the representation for $Y^i$ (equation \eqref{baryi}), we have
\begin{align*}
|Y^i_t-\widetilde{Y}^i_t|\leq &|U^i_t-\widetilde{U}^{i}_t|+\E\Big[|\Phi_T-\Phi^{(N)}_T|\,\Big|\,\mathcal{F}^{(N)}_t\Big]+\E\Big[|\Psi_T-\Psi^{(N)}_T|\,\Big|\,\mathcal{F}^{(N)}_t\Big]\\
&+\E\Bigg[\sup_{s\in[t,T]}|\phi_s-  \widetilde{\phi}^{(N)}_s|\,\Bigg|\,\mathcal{F}^{(N)}_t\Bigg]+\E\Bigg[\sup_{s\in[t,T]}|\psi_s-  \widetilde{\psi}^{(N)}_s|\,\Bigg|\,\mathcal{F}^{(N)}_t\Bigg]\\
\leq &L\E\Bigg[\int_t^T|Y^i_s-  \widetilde{Y}^i_s|du\,\Bigg|\,\mathcal{F}^{(N)}_t\Bigg]+\E\Big[|\Phi_T-\Phi^{(N)}_T|\,\Big|\,\mathcal{F}^{(N)}_t\Big]+\E\Big[|\Psi_T-\Psi^{(N)}_T|\,\Big|\,\mathcal{F}^{(N)}_t\Big]\\
&+\E\Bigg[\sup_{s\in[t,T]}|\phi_s-  \widetilde{\phi}^{(N)}_s|\,\Bigg|\,\mathcal{F}^{(N)}_t\Bigg]+\E\Bigg[\sup_{s\in[t,T]}|\psi_s-  \widetilde{\psi}^{(N)}_s|\,\Bigg|\,\mathcal{F}^{(N)}_t\Bigg].
\end{align*}
By a similar analysis as equation \eqref{hatyi'}, there exists a constant $C$ independent of $N$, such that for any $r\in [0,T]$,
\begin{equation}\label{hatyi}
	\begin{split}
&\E\Bigg[\sup_{t\in[r,T]}|\widetilde{Y}^i_t -  Y^i_t|^2\Bigg]\\
&\leq C\Bigg\{\E\Bigg[\int_r^T|Y^i_s-  \widetilde{Y}^i_s|^2ds\Bigg]+\E\Bigg[\sup_{s\in[r,T]}|\phi^{(N)}_s-  \widetilde{\phi}^{(N)}_s|^2\Bigg]+\E\Bigg[\sup_{s\in[r,T]}|\psi^{(N)}_s-  \widetilde{\psi}^{(N)}_s|^2\Bigg]\\
&\hspace{4.3cm}+\E\Bigg[\sup_{s\in[0,T]}|\phi_s-  {\phi}^{(N)}_s|^2\Bigg]+\E\Bigg[\sup_{s\in[0,T]}|\psi_s-  {\psi}^{(N)}_s|^2\Bigg]\Bigg\},
\end{split}\end{equation}
where $\psi^{(N)}$, $\phi^{(N)}$ are given in equation \eqref{beforeconvergepsi}. Applying the estimates \eqref{eq4}, for any $t\in[r,T]$, we have 
\begin{align*}
&|\phi^{(N)}_t-  \widetilde{\phi}^{(N)}_t|\leq \frac{\gamma_u}{\gamma_l N}\sum_{j=1}^N |U^j_t-\widetilde{U}^{j}_t|\leq L\frac{\gamma_u}{\gamma_l }\E\Bigg[\frac{1}{N}\sum_{j=1}^N\int_r^T |\widetilde{Y}^j_s-  Y^j_s|du\,\Bigg|\,\mathcal{F}^{(N)}_t\Bigg],\\
&|\psi_t^{(N)}-  \widetilde{\psi}^{(N)}_t|\leq \frac{\gamma_u}{\gamma_l N}\sum_{j=1}^N |U^j_t-\widetilde{U}^{j}_t|\leq L\frac{\gamma_u}{\gamma_l }\E\Bigg[\frac{1}{N}\sum_{j=1}^N\int_r^T |\widetilde{Y}^j_s-  Y^j_s|du\,\Bigg|\,\mathcal{F}^{(N)}_t\Bigg].
\end{align*}
Plugging the above inequality into equation \eqref{hatyi}, using Doob's inequality and H\"{o}lder's inequality, we have that for any $r\in[0,T]$, 
\begin{align}\label{eq9}
\E\Bigg[\sup_{t\in[r,T]}|\widetilde{Y}^i_t -  Y^i_t|^2\Bigg]\leq C\Bigg\{\E\Bigg[\int_r^T|Y^i_s-  \widetilde{Y}^i_s|^2ds\Bigg]+\E\Bigg[\frac{1}{N}\sum_{j=1}^N\int_r^T |\widetilde{Y}^j_s-  Y^j_s|^2ds\Bigg]\Bigg\}+C I^{(N)},
\end{align}
where 
\begin{align*}
I^{(N)}:=&\E\Bigg[\sup_{s\in[0,T]}|\phi_s-  {\phi}^{(N)}_s|^2\Bigg]+\E\Bigg[\sup_{s\in[0,T]}|\psi_s-  {\psi}^{(N)}_s|^2\Bigg].
\end{align*}
Summing equation \eqref{eq9} over $i$, we obtain that 
\begin{align*}
\E\Bigg[\frac{1}{N}\sum_{i=1}^N\sup_{t\in[r,T]}|\widetilde{Y}^i_t -  Y^i_t|^2\Bigg]\leq &C\E\Bigg[\frac{1}{N}\sum_{j=1}^N\int_r^T |\widetilde{Y}^j_s-  Y^j_s|^2ds\Bigg]+C I^{(N)}\\
\leq &C\int_r^T\E\Bigg[ \frac{1}{N}\sum_{j=1}^N\sup_{s\in[u,T]}|\widetilde{Y}^j_s-  Y^j_s|^2\Bigg]du+C I^{(N)}.
\end{align*}
It follows from Gr\"onwall's inequality that
\begin{align*}
\E\Bigg[\frac{1}{N}\sum_{i=1}^N\sup_{t\in[0,T]}|\widetilde{Y}^i_t -  Y^i_t|^2\Bigg]\leq C I^{(N)}.
\end{align*}
Combining the above estimate with equation \eqref{eq9}, we finally obtain that
\begin{align*}
\E\Bigg[\sup_{t\in[0,T]}|\widetilde{Y}^i_t -  Y^i_t|^2\Bigg]\leq CI^{(N)},
\end{align*} 
where $C$ is a constant only depending on $L,m,M,T$.  By Lemma \ref{convergepsi}, we finish the proof of (i).

To prove (ii), it suffices to show that under Conditions (B1) and (B2), if $\xi$ is $p$-integrable, $f(\cdot,0)$ is progressively measurable with $\E[\int_0^T|f(t,0)|^pdt]<\infty$ and $\sup_{t\in[0,T]}\E|  \widetilde{Z}^1_t|^p<\infty$ for some $p>4$, then there exists a constant $C$ independent of $N$, such that 
		\begin{align}
			\label{eqn:lemma_part2}
			\E\Bigg[\sup_{s\in[0,T]}\Big|\psi_s-  {\psi}^{(N)}_s\Big|^2\Bigg]\leq CN^{-1/2}.
		\end{align}
It holds, since by equation (2.6) in \cite{Briand2020Particles} and equation \eqref{eq10} we have
\begin{align*}
	\E\Bigg[\sup_{s\in[0,T]}\Big|\psi_s-  {\psi}^{(N)}_s\Big|^2\Bigg]\leq \frac{\gamma_u^2}{\gamma_l^2}\E\Bigg[\sup_{t\in[0,T]}|W_1(\nu_t,\nu_t^{(N)})|^2\Bigg],
\end{align*}
where $W_1$ is the Wasserstein-$1$ distance defined by 
\begin{align*}
	W_1(\nu,\nu'):=\sup_{\varphi: 1\textrm{-Lipschitz}}\left|\int \varphi(d\nu-d\nu')\right|.
\end{align*}
Applying Theorem 10.2.7 in \cite{rachev2006mass} and the results about the control of Wasserstein distance of empirical measures of i.i.d. sample to the true law by \cite{Briand2020Particles}, we obtain the following estimate
\begin{align*}
	\E\Bigg[\sup_{t\in[0,T]}|W_1(\nu_t,\nu_t^{(N)})|^2\Bigg]\leq CN^{-1/2},
\end{align*}
which gives equation \eqref{eqn:lemma_part2} as desired. 
Analogously, we can show that
\begin{align*}
 \E\Bigg[\sup_{s\in[0,T]}\Big|\phi_s- {\phi}^{(N)}_s\Big|^2\Bigg]\leq CN^{-1/2},
\end{align*}
Now following the steps in the proof of (i), we could complete the proof of (ii).
\end{proof}


\bibliography{bib-ms}



\end{document}